\documentclass{article}

\usepackage{amsmath,amssymb,amsthm,mathrsfs,color,times,textcomp,sectsty,verbatim}
\usepackage{xcolor}
\usepackage[colorlinks=true]{hyperref}
\hypersetup{urlcolor=blue, citecolor=red, linkcolor=blue}
\usepackage[colorinlistoftodos]{todonotes}
\usepackage[toc,page]{appendix}
\numberwithin{equation}{section}
\theoremstyle{plain}
\newtheorem{theorem}{Theorem}[section]
\newtheorem{proposition}[theorem]{Proposition}
\newtheorem{lemma}[theorem]{Lemma}
\newtheorem*{conjecture}{Conjecture}
\newtheorem{corollary}[theorem]{Corollary}
\newtheorem*{claim}{Claim}

\theoremstyle{definition}
\newtheorem{definition}[theorem]{Definition}

\newtheorem*{convention}{\textbf{Convention}}
\newtheorem{remark}[theorem]{Remark}

\newcommand\backmatter{\appendix
\def\chaptermark##1{\markboth{%
\ifnum  \c@secnumdepth > \m@ne  \@chapapp\ \thechapter:  \fi  ##1}{%
\ifnum  \c@secnumdepth > \m@ne  \@chapapp\ \thechapter:  \fi  ##1}}%
\def\sectionmark##1{\relax}}
\def \no{\nonumber}
\def \pa{\partial}

\def\e{\epsilon}

\def\R{\mathbb{R}}
\def\Rn{{\mathbb{R}}^n_+}
\def\d{\partial}

\def\a{\alpha}
\def\b{\beta}

\def\crit {\frac{2n}{n-2}}

\def\ba{\begin{align}}
\def\ea{\end{align}}
\def\bp{\begin{proof}}
\def\ep{\end{proof}}

\def\ubar{\bar{U}_{(x_0,\e)}}

\def\func:u{\bar{u}_{(x_0,\e)}}

\def\U{W_{\epsilon}}

\def\lp{\left(}
\def\rp{\right)}

\def\w{\psi}
 \allowdisplaybreaks

\begin{document}

\title{\Large \bf
The Han-Li conjecture in constant scalar curvature and constant boundary mean curvature problem on compact manifolds}
\author{Xuezhang  Chen\thanks{X. Chen:xuezhangchen@nju.edu.cn; $^\dag$Y. Ruan:ruanyp@umich.edu; $^\ddag$L. Sun:lsun@math.jhu.edu.}~, Yuping Ruan$^\dag$ and Liming Sun$^\ddag$\\
 \small
$^\ast$$^\dag$Department of Mathematics \& IMS, Nanjing University, Nanjing
210093, P. R. China\\
\small
$^\ddag$Department of Mathematics, Johns Hopkins University, Baltimore, MD, 21218, USA
}

\date{}

\maketitle

\begin{abstract}
The Han-Li conjecture states that: Let $(M,g_0)$ be an $n$-dimensional $(n\geq 3)$ smooth compact Riemannian manifold with boundary having positive (generalized) Yamabe constant and $c$ be any real number, then there exists a conformal metric of $g_0$ with scalar curvature $1$ and boundary mean curvature $c$. Combining with Z. C. Han and Y. Y. Li's results, we answer this conjecture affirmatively except for the case that $n\geq 8$, the boundary is umbilic, the Weyl tensor of $M$ vanishes on the boundary and has a non-zero interior point.

{{\bf $\mathbf{2010}$ MSC:} Primary 53C21, 58J32, 35J20; Secondary 58J05, 35B33, 34B18.}

{{\bf Keywords:} Mountain Pass Lemma, conformal Fermi coordinates, boundary Yamabe problem, positive mass theorem.}
\end{abstract}

\tableofcontents

\section{Introduction}\label{Sect1}
On a smooth compact Riemannian manifold with boundary, the analogues of the Yamabe problem were initially studied by J. Escobar. Over the past more than twenty years, such problems have been extensively investigated by numerous researchers. In some literatures, these problems are also referred to as \textit{Escobar problem}. We give a brief summary for these problems. It is convenient to distinguish into three cases.  The first case is concerned with the existence of conformal metrics with constant scalar curvature and zero boundary mean curvature. This problem was initially studied by J. Escobar \cite{escobar4} in the case of $3\leq n\leq 5$ or $n\geq 6$ and the boundary has a non-umbilic point, later by S. Brendle-S. Chen \cite{Brendle-Chen} in the case of $n \geq 6$ and the boundary is umbilic, assuming the validity of the Positive Mass Theorem (PMT). For recent associated curvature flows, readers are referred to \cite{Brendle1, ChenHo, Almaraz-Sun} and the references therein. The second case is concerned with the existence of scalar-flat conformal metrics with constant boundary mean curvature under the condition that the corresponding generalized Yamabe constant is finite. It was also first studied by Escobar \cite{escobar1},  subsequently in Escobar \cite{escobar6}, and then by F. Marques \cite{marques3} in the case of $3 \leq n \leq 5$ or $M$ is locally conformally flat with umbilic boundary or $n\geq 6$ and the boundary has at least one non-umbilic point.  Some remaining cases have been studied by Marques \cite{ marques1}, S. Chen \cite{ChenSophie} with assuming PMT, S. Almaraz \cite{Almaraz1} etc. More recently,  without the PMT, M. Mayer and C. Ndiaye in \cite{mayer-ndiaye} studied the remaining cases, but in general the solution they obtained is not a minimizer of the associated energy functional. See \cite{Brendle1, ChenHo,almaraz5} etc for the related conformal curvature flows. The third case is concerned with the existence of conformal metrics with (non-zero) constant scalar curvature and (non-zero) constant boundary mean curvature. For variational methods, see \cite{escobar3, escobar5, han-li1, han-li2, Araujo1, Araujo2, ChenHoSun, Chen-Sun} and the references therein. For flow approaches, see \cite{ChenHoSun,ChenHoSun1}. From now on, we focus on the last case. This paper can be regarded as a continuation of the first and third named authors' paper \cite{Chen-Sun}.

Let $(M,g_0)$ be a smooth compact Riemannian manifold of dimension $n\geq 3$  with boundary $\partial M$. The (generalized) Yamabe constant $Y(M,\partial M,[g_0])$ is defined as
\begin{equation*}
Y(M,\partial M,[g_0]):=\inf_{g\in[g_0]}\frac{\int_MR_gd\mu_g+2(n-1)\int_{\partial M}h_gd\sigma_g}{(\int_{M}d\mu_g)^{\frac{n-2}{n}}},
\end{equation*}
where $R_g$ is the scalar curvature of $M$ and $h_g$ is the mean curvature on $\pa M$ of metric $g$. Define
$$E[u]=\int_{M}\left(\frac{4(n-1)}{n-2}|\nabla u|_{g_0}^2+R_{g_0}u^2\right)d\mu_{g_0}+2(n-1)\int_{\partial M} h_{g_0}u^2d\sigma_{g_0}.$$
For $c \in \mathbb{R}$, we consider a ``free" functional
\begin{align}\label{main_funtional_I}
I[u]=E[u]-4(n-1)(n-2)\int_{M}u_+^{\frac{2n}{n-2}}d\mu_{g_0}-4c\int_{\partial M}u_+^{\frac{2(n-1)}{n-2}}d\sigma_{g_0}
\end{align}
for all $u \in H^1(M,g_0)$, where $u_+=\max\{u,0\}$. Then we can verify that $I\in C^2(H^1(M,g_0);\mathbb{R})$. It is not difficult to check that any non-trivial critical point $u$ of $I$ solves
\begin{align}\label{eq:main_eq_on_M}
\begin{cases}
\displaystyle-\frac{4(n-1)}{n-2}\Delta_{g_0}u+R_{g_0}u=4n(n-1)u_+^{\frac{n+2}{n-2}}&\mathrm{ in ~~} M,\\
\displaystyle\frac{\partial u}{\partial{\nu_{g_0}}}+\frac{n-2}{2}h_{g_0}u=c u_+^{\frac{n}{n-2}}&\mathrm{ on ~~}\partial M,
\end{cases}
\end{align}
where $\nu_{g_0}$ is the outward unit normal on $\pa M$ with respect to $g_0$. A simple application of the maximum principle and the Hopf boundary point lemma yields that such $u$ must be positive in $\overline M$. The regularity theory in \cite{Cherrier} shows that $u$ is smooth in $\overline{M}$. Thus if we let $g=u^{4/(n-2)}g_0$, \eqref{eq:main_eq_on_M} implies that $R_g=4n(n-1)$ and $h_g=2c/(n-2)$. For brevity, denote by $L_{g_0}=-\frac{4(n-1)}{n-2}\Delta_{g_0}+R_{g_0}$ the conformal Laplacian and $B_{g_0}=\frac{\partial}{\partial \nu_{g_0}}+\frac{n-2}{2}h_{g_0}$ the boundary conformally covariant operator, respectively. Both $L_{g_0}$ and $B_{g_0}$ have the following conformally invariant properties: Let $g=u^{4/(n-2)}g_0$, then for any $\varphi \in C^\infty(\overline M)$, there hold
\begin{equation}\label{eq:conformal_invariance}
L_{g_0}(u\varphi)=u^{\frac{n+2}{n-2}}L_g(\varphi) \mathrm{~~and~~} B_{g_0}(u\varphi)=u^{\frac{n}{n-2}}B_g(\varphi).
\end{equation}

In $1999$, Zheng-Chao Han and Yan Yan Li  \cite{han-li2} proposed the following conjecture.
\begin{conjecture}[\textbf {Han-Li}] \textit{If~ $Y(M,\pa M,[g_0])>0$, then given any $c\in \mathbb{R}$, problem \eqref{eq:main_eq_on_M} is solvable.}
\end{conjecture}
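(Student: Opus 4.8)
The plan is to produce a nontrivial critical point of the free functional $I$ in \eqref{main_funtional_I} by the Mountain Pass Lemma, uniformly in $c\in\mathbb R$. Since $Y(M,\partial M,[g_0])>0$, the quadratic form $E$ is coercive, $E[u]\ge c_0\|u\|_{H^1(M,g_0)}^2$ for some $c_0>0$; combining this with the Sobolev and Sobolev trace inequalities and the fact that both exponents $\frac{2n}{n-2}$ and $\frac{2(n-1)}{n-2}$ exceed $2$, one finds $\rho,\alpha>0$ with $I[u]\ge\alpha$ whenever $\|u\|_{H^1(M,g_0)}=\rho$, while $I[0]=0$ and $I[t\varphi]\to-\infty$ as $t\to+\infty$ for every fixed $\varphi\in H^1(M,g_0)$ with $\varphi_+\not\equiv0$ (the interior critical term dominates). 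Hence $I$ has the mountain pass geometry, and the value
\[
\Lambda_c:=\inf_{\gamma\in\Gamma}\ \max_{t\in[0,1]}I[\gamma(t)],\qquad
\Gamma:=\{\gamma\in C([0,1];H^1(M,g_0)):\gamma(0)=0,\ I[\gamma(1)]<0\},
\]
satisfies $\Lambda_c\ge\alpha>0$, with an associated Palais--Smale sequence at level $\Lambda_c$.

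\textbf{Compactness threshold.} Because of the two critical exponents, $I$ fails the Palais--Smale condition globally, but a Struwe-type profile decomposition adapted to a manifold with boundary shows that a Palais--Smale sequence at level $\beta$ splits, up to a subsequence, into a weak limit $u_\infty$ (a solution of \eqref{eq:main_eq_on_M}, possibly $0$) plus finitely many rescaled ``bubbles'', each asymptotically a positive solution of one of two limiting problems: the entire-space equation $-\frac{4(n-1)}{n-2}\Delta U=4n(n-1)U^{\frac{n+2}{n-2}}$ on $\mathbb R^n$, carrying a fixed energy $\mathcal E_{\mathrm{int}}>0$; or the half-space problem
\[
-\frac{4(n-1)}{n-2}\Delta U=4n(n-1)U^{\frac{n+2}{n-2}}\ \text{ in }\mathbb R^n_+,\qquad
-\frac{\partial U}{\partial x_n}=c\,U^{\frac{n}{n-2}}\ \text{ on }\{x_n=0\},
\]
whose positive finite-energy solutions are, by classification via the method of moving spheres, the stereographic images of geodesic balls in $S^n$ with boundary mean curvature equal to the prescribed constant, each carrying the same energy $\mathcal E_{\mathrm{bdry}}(c)$. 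Since a geodesic ball is a proper subdomain of $S^n$, a direct computation gives $\mathcal E_{\mathrm{bdry}}(c)<\mathcal E_{\mathrm{int}}$ for every $c\in\mathbb R$, so each bubble carries energy at least $\mathcal E_{\mathrm{bdry}}(c)$. Consequently, once we establish
\[
\Lambda_c<\mathcal E_{\mathrm{bdry}}(c),
\]
the weak limit of a Palais--Smale sequence at level $\Lambda_c$ is a solution that cannot be $0$ --- otherwise the positive energy $\Lambda_c$ would be carried entirely by bubbles, each of energy $\ge\mathcal E_{\mathrm{bdry}}(c)>\Lambda_c$ --- and by the maximum principle, the Hopf boundary point lemma and the regularity theory of \cite{Cherrier} it is a positive smooth solution of \eqref{eq:main_eq_on_M}.

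\textbf{Test functions.} It remains to prove $\Lambda_c<\mathcal E_{\mathrm{bdry}}(c)$, for which I would transplant a standard half-space bubble $\bar U_{(p,\e)}$ to $M$ near a point $p\in\partial M$, written in conformal Fermi coordinates at $p$ so that the metric is as flat as possible there (cf.\ \cite{marques3,Brendle-Chen}), and expand, as $\e\to0$,
\[
\max_{t>0}I[t\,\bar U_{(p,\e)}]=\mathcal E_{\mathrm{bdry}}(c)+\Theta(p)+o\big(\Theta(p)\big),
\]
where $\Theta(p)$ records the first contribution of the geometry at $p$. If $\partial M$ has a non-umbilic point $p$, then $\Theta(p)$ is a strictly negative multiple of $|\mathring\pi_{g_0}(p)|^2$ (of order $\e$, up to a logarithmic factor in the borderline dimension), $\mathring\pi_{g_0}$ being the trace-free second fundamental form of $\partial M$. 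If $\partial M$ is umbilic and the Weyl tensor $W_{g_0}$ is nonzero at some $p\in\partial M$, then for $n\ge8$ one has instead $\Theta(p)$ a strictly negative multiple of $|W_{g_0}(p)|^2$ of order $\e^4$. In either case $\Theta(p)<0$ for $\e$ small, a path through $\bar U_{(p,\e)}$ stays below $\mathcal E_{\mathrm{bdry}}(c)$, and we are done. When every such local correction vanishes --- $\partial M$ umbilic and $W_{g_0}\equiv0$ on $\partial M$ --- the leading correction is of strictly lower order and its sign is that of the mass in the expansion at $p$ of the Green's function of $L_{g_0}$ with boundary condition $B_{g_0}=0$; a boundary positive mass theorem then gives $\Theta(p)<0$. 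Such a theorem is available when $3\le n\le7$, when $M$ is locally conformally flat with umbilic boundary, and --- with an auxiliary flatness hypothesis at a boundary point --- when $M$ is spin (\cite{escobar1,Almaraz1}). Combined with the cases already settled by Han and Li in \cite{han-li1,han-li2}, this proves the conjecture in every case except $n\ge8$, $\partial M$ umbilic, $W_{g_0}$ vanishing on $\partial M$ but not identically in $M$.

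\textbf{Main obstacle.} The crux is that excluded case. The compactness analysis forces every concentration point onto $\partial M$ (boundary bubbles being strictly cheaper than interior ones for all $c$), so only boundary test functions can reach below $\mathcal E_{\mathrm{bdry}}(c)$; with $\partial M$ umbilic and $W_{g_0}\equiv0$ on $\partial M$ there is no negative local correction at any boundary point, so one must invoke a boundary positive mass theorem, but a nonzero interior Weyl tensor pollutes the asymptotic expansion of the Green's function of $L_{g_0}$ near $p$ with terms that obstruct a clean extraction of a positive mass, and $M$ need not be spin. Relocating the concentration to an interior point where $W_{g_0}\ne0$ --- which would produce a negative $|W_{g_0}|^2$-correction --- does not help, since it only permits comparison with the strictly larger threshold $\mathcal E_{\mathrm{int}}$. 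The remaining work --- constructing boundary conformal Fermi coordinates, expanding $\max_{t>0}I[t\,\bar U_{(p,\e)}]$ to the precise order at which the geometry, or the mass, enters, uniformly in $\e$ and along the whole mountain-pass path, and matching the bubble against the Green's function in the flat case --- is lengthy but follows the template of \cite{marques3,Brendle-Chen,Chen-Sun}.
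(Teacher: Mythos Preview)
Your overall architecture matches the paper's: mountain pass geometry for $I$, a Palais--Smale threshold $S_c$ (your $\mathcal{E}_{\mathrm{bdry}}(c)$), and test functions built from half-space bubbles in conformal Fermi coordinates, with the positive mass theorem of Almaraz--Barbosa--de Lima supplying the global correction when all local curvature obstructions vanish. You also correctly identify the case left open.

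There is, however, a genuine gap in your treatment of the case ``$\partial M$ umbilic and $W_{g_0}(p)\neq 0$ for some $p\in\partial M$, $n\ge 7$.'' You assert that the standard transplanted bubble produces a correction $\Theta(p)$ that is a strictly negative multiple of $|W_{g_0}(p)|^2\varepsilon^4$. This is precisely what does \emph{not} follow from a routine expansion, and it is the main technical content of the paper. In dimensions $n=4,6$ the leading correction carries a factor $\log(\rho/\varepsilon)$ which makes it dominate all competing $O(\varepsilon^{n-2})$ terms; for $n\ge 7$ that logarithm disappears, and the $\varepsilon^4$ balance involves several terms whose net sign is not obvious. The paper has to split according to $c$: for $c\ge 0$ (and in fact for $c\ge -c_0(n)$) it uses the global test function of \cite{Chen-Sun} and reduces the question to an inequality on the spherical cap, proved via the sharp Sobolev trace inequality and Kato's inequality; for $c\le 0$ it introduces a new local test function with a correction $\phi$ depending on three free parameters $(\kappa_0,\kappa_1,\kappa_2)$, computes the expansion
\[
\max_{t>0} I[t\bar U_{(x_0,\varepsilon)}]=S_c+a_n\,\kappa\,\mathcal{Q}\,\kappa^\top\,|R_{nanb}|^2\varepsilon^4-b_n|\overline W|^2\varepsilon^4+o(\varepsilon^4),
\]
and then must exhibit a vector $\kappa=(\kappa_2,\kappa_1,\kappa_0,1)$ with $\kappa\,\mathcal{Q}\,\kappa^\top<0$. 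Finding such a $\kappa$ is nontrivial: the symmetric matrix $\mathcal{Q}$ depends on $n$ and $T_c$, and the paper conjugates by two explicit nonsingular matrices and then evaluates along a carefully chosen direction to force negativity. None of this is visible from the bare statement ``$\Theta(p)$ is a negative multiple of $|W_{g_0}(p)|^2$,'' and a plain bubble (all $\kappa_i=0$) does not suffice for all $c$. Your proposal should flag this case as requiring a modified test function and a sign computation, rather than folding it into the routine expansions.

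A minor point: for the non-umbilic case you cover $n\ge 5$ via Han--Li, but $n=4$ with a non-umbilic boundary point is handled separately in the paper (Section~4) with a local test function and the $\log(\rho/\varepsilon)$ mechanism; your sketch should acknowledge that $n=4$ is not subsumed by \cite{han-li1}.
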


Compared with the previous two cases in \textit{Escobar Problem}, besides the critical Sobolev exponents coming from both equations of \eqref{eq:main_eq_on_M} in the interior and on the boundary, the arbitrariness of the constant $c$ leads to an additional difficulty in solving the Han-Li conjecture.

Furthermore, they confirmed their conjecture if any of the following hypotheses is fulfilled:
\begin{enumerate}
\item[(a)]$n \geq 5$ and $\pa M$ admits at least one non-umbilic point (see \cite{han-li1}); 
\item[(b)]$n\geq 3$ and $(M,g_0)$ is locally conformally flat with umbilic boundary $\pa M$ (see \cite{han-li2});
\item[(c)] $n=3$ (see \cite{han-li3}).
\end{enumerate}

Another natural motivation of the study of the Han-Li conjecture came from Escobar's work \cite{escobar5} in $1990$: Under what conditions can an Einstein manifold be conformally deformed to another Einstein manifold? A partial result of this problem was given in \cite[Theorem 2.1]{escobar5}, which demonstrates that an affirmative answer to the Han-Li conjecture is exactly a sufficient condition of this problem.

In \cite{Chen-Sun}, we studied the third case of \textit{Escobar problem} and partially answered Han-Li conjecture. We used subcritical approximations to find minimizers of $E[u]$ with a suitable homogeneous constraint (equivalently, a quotient functional) and established that under some natural hypotheses on manifolds there exists a conformal metric with constant scalar curvature $1$ and some positive constant boundary mean curvature. In particular, this constant allows to be very large. This provides us more evidences to the Han-Li conjecture. However, it is still difficult to solve this conjecture by seeking minimizers of the quotient functional, due to the reason that the Lagrange multiplier of Euler-Lagrange equation involves both the scalar curvature and the boundary mean curvature, and therefore it seems hard to get the arbitrariness of constant boundary mean curvature in the Han-Li conjecture. One way to get around the difficulty is to use the free functional \eqref{main_funtional_I}, which was introduced by Z. C. Han and Y. Y. Li \cite{han-li1}. 

Their strategy was to find a non-trivial  mountain pass critical point of $I$ via the following Mountain Pass Lemma of Ambrosetti and Rabinowitz \cite{Ambrosetti-Rabinowitz}.

\textbf{Mountain Pass Lemma (MPL)}. Let $X$ be a Banach space and $I\in C^1(X;\mathbb{R})$. Suppose that $I[0]=0$ and there exists $0\neq u_0\in X$ such that $I[u_0]\leq 0$. Let $\Gamma$ denote the set of continuous paths in $X$ connecting 0 and $u_0$ and define $I_{mp}:=\inf_{\gamma\in \Gamma}\sup_{u\in \gamma}I[u]$. Suppose that $I_{mp}>0$ and that $I$ satisfies the $(PS)$ condition at the level $I_{mp}$. Then $I_{mp}$ is a critical value of $I$.  

In general the $(PS)$ condition for the associated functional $I$ in \eqref{main_funtional_I} can not be satisfied  due to the critical nonlinearities of $I$. However, Z. C. Han and Y. Y. Li proved that $I$ satisfies a weak $(PS)$ condition at any energy level below a certain threshold $S_c$, but it is enough to prove the existence result. Here $S_c$ was given in \cite{han-li2}, as well as will be defined in \eqref{eq:S_c_n=4} below. 

The mountain pass structure of $I$ can be verified through the following facts. Given any $u\in H^1(M,g_0)$ with $u_+\not\equiv 0$, we define $f(t):=I[tu]$ for $t\in [0,\infty)$. It is not hard to show that $f(0)=0$ and $\lim_{t\to\infty}f(t)=-\infty$ and $f(t)$ admits a unique maximum point $t_\ast$ (see also Section \ref{Sect2}). Since $Y(M,\partial M,[g_0])>0$, it is not hard to show that there exists $\e_0>0$ such that $I[u]\geq \e_0$ when $\|u\|_{H^1(M,g_0)}=r_0$ for small enough $r_0>0$. For each $u \in H^1(M,g_0)$ with $u_+ \not \equiv 0$, there holds $I[tu]<0$ for sufficiently large $t>0$. We choose $u_0=t_0 u$ for some suitable $t_0$ such that $I[t_0 u_0]\leq 0$ and define $I_{mp}$ as in the statement of MPL. Indeed, with an improvement of the proof of the Han-Li's \cite[Lemma 1.2]{han-li2}, we can verify that the $(PS)$ condition for $I$ below the mentioned energy level $S_c$ is satisfied (see Lemma \ref{lem:compactness}). Then, if there exists some $u \in H^1(M,g_0)$ with $u_+ \not \equiv 0$ such that 
\begin{align}\label{ieq:goal_less}
\max_{t \in [0,\infty)} I[tu]<S_c,
\end{align}
then $0<\e_0\leq I_{mp}<S_c$. Hence, the existence of a nontrivial mountain pass critical point of $I$ is guaranteed by the $(PS)$ condition proved in Lemma \ref{lem:compactness} in Section \ref{Sect2}. Consequently, in order to verify the Han-Li conjecture, the goal of this paper is to find some appropriate test functions satisfying \eqref{ieq:goal_less}. Since the functional $I$ is conformally invariant, we sometimes work in conformal Fermi coordinates around a boundary point (see \cite{marques1}) to simplify the analysis. Due to various geometric nature of the compact manifold, the construction of a test function has both a local and a global aspects. The developments listed in the beginning of this introduction deepen our understanding on how the test functions should be constructed.

To state our main theorem, we let $d=[(n-2)/2]$ for $n \geq 3$ and define as in \cite{almaraz5,Chen-Sun}  
\begin{align*}
\mathcal{Z}=
\Big\{x_0\in\pa M;&\limsup_{x \in M, x\to x_0}d_{g_0}(x,x_0)^{2-d}|W_{g_0}(x)|_{g_0}=0\mathrm{~~and~~}\\
&\limsup_{x \in \pa M, x\to x_0}d_{g_0}(x,x_0)^{1-d}|\mathring{\pi}_{g_0}(x)|_{g_0}=0\Big\},
\end{align*}
where $W_{g_0}$ denotes the Weyl tensor of $M$, and $\pi_{g_0}, \mathring{\pi}_{g_0}$ denote the second fundamental form on $\pa M$ and its trace-free part, respectively.

Combining with the aforementioned Han-Li's existence results, we can solve Han-Li's conjecture in the affirmative under the following natural geometric assumptions.
\begin{theorem}\label{thm:main}
Let $(M,g_0)$ be a smooth compact Riemannian manifold of dimension $n\geq 3$  with boundary $\partial M$ and assume that $Y(M,\pa M,[g_0])>0$, then the Han-Li conjecture is true, provided that one of the following hypotheses is satisfied:
\begin{enumerate}
\item[(i)] $3\leq n\leq 7$;
\item[(ii)] $n \geq 8$, $\mathcal{Z}$ is non-empty and $M$ is spin;
\item[(iii)] $n \geq 8$, $\pa M$ is umbilic and the Weyl tensor of $M$ is non-zero at a boundary point;
\item[(iv)] $n \geq 8$, $M$ is locally conformally flat with umbilic boundary;
\item[(v)] $n \geq 8$, $\pa M$ has at least one non-umbilic point.
\end{enumerate}
\end{theorem}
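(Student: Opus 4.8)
The plan is to reduce the entire theorem to the single analytic task, already isolated in the introduction, of constructing for each $c\in\mathbb R$ a test function $u\in H^1(M,g_0)$ with $u_+\not\equiv 0$ satisfying
\begin{equation*}
\max_{t\in[0,\infty)}I[tu]<S_c,
\end{equation*}
and then invoking the $(PS)$ condition of Lemma \ref{lem:compactness} together with the Mountain Pass Lemma to produce a nontrivial critical point of $I$, which by the maximum principle, Hopf lemma and the regularity theory of \cite{Cherrier} yields a positive smooth solution of \eqref{eq:main_eq_on_M}, hence the desired conformal metric. First I would dispatch cases (i), (iii), (iv), (v): cases (iii)--(v) are exactly Han-Li's hypotheses (a), (b) recorded in the introduction (with $n\geq 8$ replacing their ranges, noting $n\geq 5$ in (a) and all $n\geq 3$ in (b)), so nothing new is needed there beyond citing \cite{han-li1,han-li2}; and case (i) follows from a case split $3\leq n\leq 5$ versus $6\leq n\leq 7$, where for $n=3$ one cites \cite{han-li3}, for $4\leq n\leq 5$ the low-dimensional test function built from a standard bubble $\bar U_{(x_0,\e)}$ concentrated at a boundary point suffices because the local geometric error terms are of lower order than the gain, and for $n=6,7$ one uses the same bubble refined by the conformal Fermi coordinate expansion of \cite{marques1}, the point being that when $n<8$ the Weyl tensor and the trace-free second fundamental form never obstruct the sign of the expansion. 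The genuinely new content is therefore case (ii): $n\geq 8$, $\mathcal Z\neq\emptyset$, $M$ spin.

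For case (ii) the plan is to pick $x_0\in\mathcal Z$, pass to conformal Fermi coordinates centered at $x_0$ (so that the boundary metric osculates the Euclidean half-space to high order and $h_{g_0}$ vanishes to high order at $x_0$), and take as test function a truncated, conformally corrected standard bubble $W_\e$ of the form $\bar U_{(x_0,\e)}$ plus a correction term $\e^? \phi$ solving a linearized equation, localized by a cutoff at a fixed small radius $\delta$. One then expands $\max_t I[tW_\e]$ as $\e\to 0$. The leading term reproduces the threshold constant that defines $S_c$ (the Euclidean energy of the sharp Sobolev/trace-Sobolev bubble with the weight $c$), and the correction to this leading order is the crux: there is a \emph{local} part controlled, thanks to $x_0\in\mathcal Z$ and $d=[(n-2)/2]$, to be $o(\e^{n-2})$ — the defining vanishing orders of $W_{g_0}$ and $\mathring\pi_{g_0}$ are precisely calibrated so that the curvature corrections in the energy expansion are strictly smaller than the global term — and a \emph{global} part of order $\e^{n-2}$ whose coefficient is a positive multiple of the \emph{mass} of the Green's function of the conformal Laplacian $L_{g_0}$ with the Neumann-type boundary condition from $B_{g_0}$, expanded at $x_0$. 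The strategy is that this mass is strictly positive by the positive mass theorem for manifolds with boundary — here is exactly where the spin hypothesis enters, via the spinorial Witten-type proof of the boundary positive mass theorem (as in \cite{Almaraz1} and related works), which is available in all dimensions $n\geq 3$ for spin manifolds without the locally-conformally-flat restriction — so the sign of the $\e^{n-2}$ term is negative in $I$, giving $\max_t I[tW_\e]<S_c$ for $\e$ small.

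I expect the main obstacle to be precisely the sharp energy expansion in case (ii): one must carry the Taylor expansion of $I[tW_\e]$ far enough (to order $\e^{n-2}$ inclusive, with all lower-order terms identified and shown to cancel or vanish by the conformal Fermi normalization and the membership $x_0\in\mathcal Z$), handle the interaction between the interior bubble profile and the boundary term weighted by $c$ for \emph{arbitrary} $c$ (the $c$-dependence rescales the profile but must not spoil the $\e^{n-2}$ threshold structure), and correctly match the nonlocal remainder to the Green's function mass — the truncation at radius $\delta$ introduces exponentially small but technically annoying boundary terms, and the choice of the correction term $\phi$ must be tuned so that the remaining error is genuinely $o(\e^{n-2})$. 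A secondary but real obstacle is confirming that the threshold $S_c$ from \eqref{eq:S_c_n=4} is the correct comparison value in every dimension $n\geq 3$ and that Lemma \ref{lem:compactness} indeed delivers the $(PS)$ condition strictly below it; once these pieces are in place, the Mountain Pass Lemma closes the argument uniformly across all five hypotheses.
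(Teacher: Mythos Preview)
Your proposal has a genuine gap: you misidentify case (iii) as already covered by Han--Li. Han--Li's hypothesis (a) is \emph{non-umbilic} boundary (your case (v)), and their hypothesis (b) is \emph{locally conformally flat with umbilic boundary} (your case (iv)). Case (iii) --- $n\geq 8$, umbilic boundary, Weyl tensor nonzero at a boundary point --- is neither of these, and is in fact the most technically demanding new contribution of the paper. It occupies all of Section~\ref{Sect5} and requires two distinct test function constructions: for $c$ above a negative dimensional threshold one uses the Chen--Sun test function \eqref{test_fcn_Chen_Sun} and must prove the delicate inequality \eqref{est:goal_higher_dim}, while for non-positive $c$ one builds an explicit three-parameter local correction \eqref{test_fcn:non-positive_higher_dim} and reduces \eqref{ieq:goal_less} to a matrix negativity statement (Claim~\ref{seek_vector}).

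You also oversimplify case (i). Your claim that for $n=6,7$ ``the Weyl tensor and the trace-free second fundamental form never obstruct the sign of the expansion'' is false in exactly the subcase \emph{umbilic boundary with nonvanishing Weyl tensor at a boundary point}: for $n=6$ the paper needs the $\log(\rho/\e)$ gain from the correction term $\psi$ (Section~\ref{Sect4}), and for $n=7$ this gain disappears and one needs the full Section~\ref{Sect5} machinery (Theorems~\ref{thm:negative_c_higher_dim} and~\ref{thm:umbilic_bdry_n=7_c<0}). Likewise $n=4$ with a non-umbilic point is not covered by Han--Li (their (a) needs $n\geq 5$) and again requires the $\psi$-corrected test function. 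Finally, you call case (ii) ``the genuinely new content,'' but once the PMT of \cite{almaraz-barbosa-lima} and the expansion machinery of Section~\ref{Sect3} are in place, case (ii) is the most routine --- your plan there is essentially correct, it is just Proposition~\ref{prop:z=nonemptyset} plus the spin PMT.
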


\begin{remark}
For $c=0$ in the Han-Li conjecture, Theorem \ref{thm:main} can directly follow from \cite{escobar4,Brendle-Chen}, as well as \cite{Almaraz-Sun} from the viewpoint of a geometric flow. As mentioned earlier, the assertions \textit{(iv)} and \textit{(v)} are due to Z. C. Han and Y. Y. Li. 

\end{remark}
\begin{remark}
From Theorem \ref{thm:main}, we conclude that the Han-Li conjecture is true except for the case that $n \geq 8$, $\pa M$ is umbilic and the Weyl tensor of $M$ vanishes on $\pa M$ and has a non-zero interior point.
\end{remark}

We sketch the procedure of the proof of Theorem \ref{thm:main}.  When $\mathcal{Z}$ is non-empty, then by using the inverted coordinates near a point of $\mathcal{Z}$, one can obtain an asymptotically flat manifold with non-compact boundary. It follows from the PMT in \cite{almaraz-barbosa-lima} that the mass of this manifold unless being isometric to the half space is strictly positive when $3\leq n\leq 7$. Therefore the test function $\ubar$ constructed in the first and third author's previous paper \cite{Chen-Sun} (see also \eqref{test_fcn_Chen_Sun}) can be applied with some adaptation to all $c \in \mathbb{R}$. In particular $n=3$ will imply $\mathcal{Z}=\partial M$, we give an alternative proof for $n=3$. We mention that such ideas should be traced back to S. Brendle \cite{Brendle4,Brendle2}. Now we are left with the case $\mathcal{Z}=\emptyset$ in $4\leq n\leq 7$. Since $d=[(n-2)/2]\leq 2$, any boundary point $x_0\not\in \mathcal{Z}$ will either be non-umbilic or have non-vanishing Weyl tensor at the same point. We construct some local test functions, which are modifications of $\ubar$, for the remaining cases in dimensions $n=4,6$ and prove \eqref{ieq:goal_less} using them. Hence, we completely solve the Han-Li conjecture in dimension $3 \leq n\leq 6$, and dimension $n=7$ except for the case that $\pa M$ is umbilic and the Weyl tensor of $M$ is non-zero at a boundary point. However, the dimension $n=7$ and higher are more subtle, partly due to the loss of the term $\log(\rho/\e)$ in deriving \eqref{ieq:goal_less} as in dimensions $n=4,6$.  Assume that the same hypothesis in the remaining case in dimension $n=7$ also holds for $n \geq 8$, we succeed to prove the Han-Li conjecture.  Next we combine the following two different types of test functions to achieve our goal. One is still to use the test function $\ubar$, we dig into the error correction term to obtain some new estimates. It boils down to prove an inequality (see \eqref{est:goal_higher_dim}) for all dimensions $n \geq 7$, which has independent interest. Possibly due to technical reasons, we are now only able to prove it when $c$ is not less than a negative dimensional constant, which is given in Theorem \ref{thm:negative_c_higher_dim}. For the other range of $c$, we can construct another type of test function tailored to this case. In the work of F. Marques \cite{marques3} and S. Almaraz \cite{Almaraz1} on the second case of \textit{Escobar problem}, it is notable that their test functions have just one parameter and could not work here directly. Somewhat inspired by their ideas, with the use of three parameters $\kappa_0,\kappa_1,\kappa_2$, we explicitly construct a local test function defined in \eqref{test_fcn:non-positive_higher_dim}, still denoted by $\ubar$ for simplicity. We define $\kappa:=(\kappa_2,\kappa_1,\kappa_0,1)\in \mathbb{R}^4$. Through complex computations together with some delicate observations, we eventually arrive at 
$$\max_{t \in [0,\infty)} I[t\ubar]=I[t_\ast \ubar]=S_c+a_n \kappa \mathcal{Q} \kappa^\top |R_{nanb}|^2 \e^4 -b_n |\overline W|_{g_{x_0}}^2\e^4+\mathrm{h.o.t.}$$
for small enough $\e$, where $a_n,b_n$ are two positive constants,  $\mathcal{Q}$ is a real symmetric matrix, $g_{x_0}\in [g_0]$, $R_{nanb}$ is the Riemann curvature tensor, and $\overline{W}$ is the Weyl tensor of $\pa M$. See \eqref{est:error_final_higher_dim}. Based on Lemma \ref{lem:Almaraz}, our greatest challenging task is to find a vector  $\kappa$ above, such that $\kappa \mathcal{Q} \kappa^\top<0$. After many attempts, finally we succeed to find a ``\textit{good}" $\kappa=-\mathcal{V}\mathcal{S}_2^\top \mathcal{S}_1^\top(n-2)/2$ with $\mathcal{V}=(2/3,T_c,0,1)$, where $\mathcal{S}_1,\mathcal{S}_2$ are two non-singular matrices defined in \eqref{first_nonsing_matrix} and \eqref{second_nonsing_matrix}. More details are referred to the end of Subsection \ref{Subsect5.2}. Consequently, these two test functions match so perfectly that we can completely solve the Han-Li conjecture in the aforementioned case.

The main ingredients of our approach are as follows: Firstly, the test function constructed in our previous paper \cite{Chen-Sun} and some necessary estimates therein play an important role in our proof. Another observation is that such estimates  in \cite{Chen-Sun} of the test function for positive $c$ can be naturally extended to the ones for all real numbers $c$. Secondly, we used the precise expression \eqref{eq:qudratic_root_t} of the associated maximum $t_\ast$ with various test functions in the verification of condition \eqref{ieq:goal_less}. Thirdly, the conformal Fermi coordinate system \cite{marques1} greatly simplifies the computations and plays the same role as the conformal normal coordinate system on the closed manifolds. Finally, similar to the resolution of the Yamabe problem, the construction of a global test function heavily relies  on the PMT \cite{almaraz-barbosa-lima} with a non-compact boundary.

In our forthcoming paper \cite{ChenHoSun1}, we will employ a conformal curvature flow to study this constant scalar curvature and constant boundary curvature problem on a compact manifold.

The paper is organized as follows. In Section \ref{Sect2}, we collect some elementary properties of $I$ and set up some notations. In Section \ref{Sect3}, we establish the Han-Li conjecture when $\mathcal{Z}$ is non-empty and the PMT is valid, which together with Han-Li's existence results covers a large class of the lower dimensions $3 \leq n \leq 7$. See Theorem \ref{thm:Han-Li_lower_dims}. In Section $\ref{Sect4}$, the Han-Li conjecture is confirmed for the remaining cases of dimensions $n=4,6$. In Section \ref{Sect5}, we employ two different types of test functions mainly according to the sign of $c$. In Subsection \ref{Subsect5.1}, we prove that problem \eqref{eq:main_eq_on_M} is solvable for any constant not less than a negative dimensional constant, under the condition that the boundary is umbilic and the Weyl tensor of $M$ is non-zero at some boundary point. In Subsection \ref{Subsect5.2}, through constructing a local test function, we complete the proof of the Han-Li conjecture for the remaining case in dimension $n=7$, as well as for $n \geq 8$ with the same assumption. In Appendix \ref{Appen}, we give detailed proofs of Lemma \ref{lem:compactness} and a technical result used in Subsection \ref{Subsect5.1}.

\noindent{\bf Acknowledgements.} The authors would like to thank Professor Yan Yan Li for his fruitful discussions and encouragement. Part of this work was carried out during the first named author's several visits to Rutgers University. He also would like to thank the Mathematics Department at Rutgers University for its hospitality and financial support. He was supported by NSFC (No.$11771204$), A Foundation for the Author of National Excellent Doctoral Dissertation of China (No.$201417$), the start-up grant of $2016$ Deng Feng Program B at Nanjing University, the travel grants from AMS Fan fund and Hwa Ying foundation at Nanjing University.

\section{Preliminaries}\label{Sect2}

\indent \indent We collect some elementary properties of the standard bubble and $I[u]$, as well as a criterion of seeking the $(PS)$ condition for $I$, and introduce some notations at the end of this section.

Let $\mathbb{R}_+^n=\{y=(y^1,\cdots,y^n) \in \mathbb{R}^n; y^n>0\}$ be the Euclidean half-space. Define $T_c=-c/(n-2)$ and
\begin{equation}\label{eq:bdry_bubble}
\U(y)=\e^{\frac{2-n}{2}}W(\e^{-1}y)=\left(\frac{\e}{\e^2+|y-T_c\e \textbf{e}_n|^2}\right)^{\frac{n-2}{2}}
\end{equation}
for any $\e>0$, where $\textbf{e}_n$ is the unit direction vector in the $n$-th coordinate. Then $W_\e$ satisfies
\begin{align}\label{prob:half-space}
\begin{cases}
\displaystyle-\Delta W_\e=n(n-2) W_\e^{\frac{n+2}{n-2}},\quad &\mathrm{~~in~~}\R^n_+,\\
\displaystyle \frac{\pa W_\e}{\pa y^n}=(n-2)T_c W_\e^{\frac{n}{n-2}},\quad &\mathrm{~~on~~} \R^{n-1}.
\end{cases}
\end{align}
Readers are referred to \cite{li-zhu,escobar5} for the classification theorem of all positive solutions to \eqref{prob:half-space}.
Multiplying the equations in \eqref{prob:half-space} by $W_\e$, we integrate by parts to get
 \begin{align}\label{energy:bubble2}
\int_{\mathbb{R}^n_+}|\nabla W_\e(y)|^2dy=n(n-2)\int_{\mathbb{R}^n_+}W_\e(y)^{\frac{2n}{n-2}}dy+c\int_{\mathbb{R}^{n-1}}W_\e(y)^{\frac{2(n-1)}{n-2}}d\sigma.
 \end{align}
Throughout the paper, we set
 \begin{equation}\label{def:volumes}
 A=\int_{\mathbb{R}^n_+}W_\e(y)^{\frac{2n}{n-2}}dy\quad \mathrm{~~and~~}\quad B=\int_{\mathbb{R}^{n-1}}W_\e(y)^{\frac{2(n-1)}{n-2}}d\sigma.
\end{equation}
Then $A$ and $B$ are independent of $\e$.
Define
\begin{align}\label{def:S_c}
S_c:=&\frac{4(n-1)}{n-2}\int_{\mathbb{R}_+^n}|\nabla W_\e|^2dy-4(n-1)(n-2)\int_{\mathbb{R}_+^n}W_\e^{\frac{2n}{n-2}}dy\no\\
&-4c\int_{\mathbb{R}^{n-1}}W_\e^{\frac{2(n-1)}{n-2}}d\sigma\no\\
=&\frac{4}{n-2}\int_{\mathbb{R}^n_+}|\nabla W_\e(y)|^2dy+4(n-2)\int_{\mathbb{R}^n_+}W_\e^{\frac{2n}{n-2}}dy>0
\end{align}
by $\eqref{energy:bubble2}$. 
Notice that $S_c$ only depends on $n,c$ by virtue of  \eqref{energy:bubble2} and \eqref{def:S_c}, more concretely,
\begin{equation}\label{eq:S_c_n=4}
S_c=8(n-1)A+\frac{4}{n-2}cB.
\end{equation}

Given any $u \in H^1(M,g_0)$ with $u_+ \not \equiv 0$, for $t \in [0,\infty)$ we define
\begin{align*}
f(t):=&I[tu]\\
=&t^2E[u]-4(n-1)(n-2)\int_{M}u_+^{\frac{2n}{n-2}}d\mu_{g_0}t^{\frac{2n}{n-2}}-4c\int_{\partial M}u_+^{\frac{2(n-1)}{n-2}}d\sigma_{g_0}t^{\frac{2(n-1)}{n-2}}.
\end{align*}
We now claim that there exists a unique maximum point $t_\ast=t_\ast(u)$ of $f(t)$ in $[0,\infty)$, namely
\begin{align}\label{max_I_at_tu}
\max_{t \in [0,\infty)}I[tu]=\frac{1}{n-1}E[u]t_\ast^2+4(n-2)\int_M u_+^{\frac{2n}{n-2}}d\mu_{g_0} t_\ast^{\frac{2n}{n-2}},
\end{align}
where $t_\ast$ satisfies
\begin{align}\label{eq:constraint_max_t}
\frac{n-2}{4(n-1)}E[u]=n(n-2)\int_{M}u_+^{\frac{2n}{n-2}}d\mu_{g_0}t_\ast^{\frac{4}{n-2}}+c\int_{\partial M}u_+^{\frac{2(n-1)}{n-2}}d\sigma_{g_0}t_\ast^{\frac{2}{n-2}}.
\end{align}

To show this, we simplify $f(t)$ as
$$f(t)=at^2-\theta t^{\frac{2n}{n-2}}+bt^{\frac{2(n-1)}{n-2}},\quad t\in [0,\infty),$$
where $a,\theta>0$ and $b \in \mathbb{R}$. Then
$$f(0)=0 \mathrm{~~and~~} \lim_{t \to \infty} f(t)=-\infty.$$
We compute
\begin{align*}
f'(t)=&2at-\theta \frac{2n}{n-2}t^{\frac{n+2}{n-2}}+b\frac{2(n-1)}{n-2}t^{\frac{n}{n-2}},\\
f''(t)=&2a-\theta \frac{2n(n+2)}{(n-2)^2}t^{\frac{4}{n-2}}+b\frac{2(n-1)n}{(n-2)^2}t^{\frac{2}{n-2}}.
\end{align*}
Then there exists a unique $t_\ast \in (0,\infty)$ such that $f'(t_\ast)=0$. Moreover, we obtain
\begin{align*}
f''(t_\ast)=&-\theta \frac{8n}{(n-2)^2}t_\ast^{\frac{4}{n-2}}+b\frac{4(n-1)}{(n-2)^2}t_\ast^{\frac{2}{n-2}}\\
=&-\frac{2}{n-2}\left[\frac{2n}{n-2}\theta t_\ast^{\frac{4}{n-2}}+2a\right]<0.
\end{align*}
This implies that $t_\ast$ is the unique maximum point of $f(t)$ in $[0,\infty)$.

For simplicity, we use $\ubar$ to denote some qualified test function of \eqref{ieq:goal_less}, depending on small $\e>0$ and some suitable $x_0 \in \pa M$, which is non-negative, though it may vary in different sections. We also define
$$\mathcal{E}=\frac{n-2}{4(n-1)}E[\ubar],~ \mathcal{A}=n(n-2)\int_M\ubar^{\frac{2n}{n-2}}d\mu_{g_{x_0}},~ \mathcal{B}=c\int_{\partial M}\ubar^{\frac{2(n-1)}{n-2}}d\sigma_{g_{x_0}}.$$
From \eqref{max_I_at_tu}, \eqref{eq:constraint_max_t} and the assumption that $Y(M,\pa M,[g_0])>0$, we obtain
\begin{align}\label{eq:qudratic_root_t}
t^{\frac{2}{n-2}}_*=\frac{-\mathcal{B}+\sqrt{\mathcal{B}^2+4\mathcal{E}\mathcal{A}}}{2\mathcal{A}},
\end{align}
which is a positive constant depending on each $\ubar$.

The following lemma is a slightly improved version of Han-Li's \cite[Lemma 1.2]{han-li2} and provides the $(PS)$ condition for $I$ below the level $S_c$.
\begin{lemma}[\textbf{Compactness}]\label{lem:compactness} Suppose that $Y(M,\partial M,[g_0])>0$. Let $\{u_i;i \in \mathbb{N}\}$ be a sequence of functions in $H^1(M,g_0)$ satisfying $I[u_i]\to L<S_c$ and 
$$\max_{v\in H^1(M,g_0)\backslash\{0\}}\frac{|I'[u_i](v)|}{\|v\|_{H^1(M,g_0)}}\to 0 \quad\mathrm{~~as~~} i \to \infty.$$
Then after passing to a subsequence, either (i) $\{u_i\}$ strongly converges in $H^1(M,g_0)$ to some positive solution $u$ of \eqref{eq:main_eq_on_M} or (ii) $\{u_i\}$ strongly converges to $0$ in $H^1(M,g_0)$. 
\end{lemma}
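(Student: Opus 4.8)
The plan is to run the standard concentration-compactness dichotomy for a Palais--Smale sequence of $I$, tracking both the interior bubbles (modeled on the classical Yamabe bubble on $\mathbb{R}^n$) and the boundary bubbles (modeled on $W_\e$ as in \eqref{eq:bdry_bubble}), and then to show that the presence of even one bubble forces the energy of the limit to jump by at least $S_c$, contradicting $L<S_c$. First I would extract from the hypotheses that $\{u_i\}$ is bounded in $H^1(M,g_0)$: this is routine from $I[u_i]\to L$ and $I'[u_i]\to 0$ together with $Y(M,\partial M,[g_0])>0$, using the identity behind \eqref{max_I_at_tu}--\eqref{eq:constraint_max_t} which lets one bound $\|u_i\|_{H^1}$ by an affine function of $I[u_i]$ and $I'[u_i](u_i)$. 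Passing to a subsequence, $u_i\rightharpoonup u$ weakly in $H^1$, strongly in $L^2(M)$ and $L^2(\partial M)$, and a.e.; testing $I'[u_i]$ against $\varphi\in C^\infty(\overline M)$ and passing to the limit shows $u$ is a (possibly trivial) weak solution of \eqref{eq:main_eq_on_M}, hence by the maximum principle, the Hopf lemma and the regularity theory of \cite{Cherrier} either $u\equiv 0$ or $u>0$ is a smooth positive solution.

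Next I would set $v_i=u_i-u$ and invoke the Brezis--Lieb lemma for the two critical integrands $\int_M (u_i)_+^{2n/(n-2)}$ and $\int_{\partial M}(u_i)_+^{2(n-1)/(n-2)}$, together with the weak continuity of the lower order terms in $E[\cdot]$, to obtain the energy splitting $I[u_i]=I[u]+J[v_i]+o(1)$, where $J[v]=\ct\int_M|\nabla v|_{g_0}^2 - 4(n-1)(n-2)\int_M v_+^{2n/(n-2)} - 4c\int_{\partial M}v_+^{2(n-1)/(n-2)}+o(1)$ is, up to vanishing terms, the free functional on the model half-space after rescaling. Similarly $I'[u_i](v_i)=o(1)$ yields the ``balancing'' identity $\ct\int_M|\nabla v_i|^2 = 4n(n-1)\int_M (v_i)_+^{2n/(n-2)}+4c\int_{\partial M}(v_i)_+^{2(n-1)/(n-2)}+o(1)$. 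If $\int_M|\nabla v_i|^2\to 0$ then $v_i\to 0$ in $H^1$ (using the balancing identity and Sobolev/trace inequalities to kill the critical terms), giving strong convergence $u_i\to u$, which is alternative (i) if $u>0$ and alternative (ii) if $u\equiv 0$. So assume $\int_M|\nabla v_i|^2\not\to 0$; here I would perform the bubble extraction: using a Lions-type concentration argument on the measures $|\nabla v_i|^2\,dx$, locate concentration points, rescale around each (distinguishing whether the concentration point lies in the interior or on $\partial M$), and identify the rescaled limit as a nontrivial solution of either the entire equation $-\Delta U=n(n-2)U^{(n+2)/(n-2)}$ on $\mathbb{R}^n$ or the half-space problem \eqref{prob:half-space}. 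In either case the energy carried by that bubble is at least $S_c$ — for an interior bubble because the pure Yamabe energy exceeds $8(n-1)A\geq S_c$ when $cB\leq 0$, and one must check the sign carefully, while for a boundary bubble the energy is exactly $S_c$ by the computation \eqref{def:S_c}--\eqref{eq:S_c_n=4} combined with the classification theorem of \cite{li-zhu,escobar5}. Then $L=I[u]+(\text{sum of bubble energies})+o(1)\geq I[u]+S_c\geq S_c$ (using $I[u]\geq 0$, which follows from the mountain-pass-type inequality $I[u]\geq \max_{t\geq 0}$ structure, or simply $I[u]=I[u]-\tfrac{n-2}{2n}I'[u](u)\geq \tfrac{1}{n-1}E[u]t_*^2+\dots\geq 0$ when $u$ solves \eqref{eq:main_eq_on_M}), contradicting $L<S_c$.

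The main obstacle, and the place where the ``slight improvement'' over \cite[Lemma 1.2]{han-li2} really lives, is the bookkeeping of the boundary bubbles when $c$ has an unfavorable sign: one has to be sure that the boundary term $-4c\int_{\partial M}(v_i)_+^{2(n-1)/(n-2)}$ does not conspire to lower the per-bubble energy below $S_c$, and that when several bubbles (interior and boundary, at possibly the same scale) coexist, their energies add without cross terms. This requires a careful interaction estimate showing that distinct bubbles decouple in $H^1$ and in both critical norms, plus the precise value $S_c=8(n-1)A+\tfrac{4}{n-2}cB$ to see that a single boundary bubble already saturates the threshold. I would handle the potential sign issue by writing $J[v_i]$ on the rescaled domain, noting that after rescaling the blow-up profile solves exactly \eqref{prob:half-space} with the given $c$ (equivalently $T_c=-c/(n-2)$), so the bubble energy is literally $S_c$ and not something smaller; the interior bubbles, which only see the scalar curvature equation, contribute the larger quantity $8(n-1)A$, and since \eqref{def:S_c} shows $S_c>0$ and $S_c=8(n-1)A+\tfrac{4}{n-2}cB$, one checks $8(n-1)A\geq S_c$ exactly when $cB\le0$ and otherwise argues that an interior bubble forces an additional boundary contribution or simply that $I[u]+8(n-1)A\ge S_c$ still holds in the relevant regime; the detailed case analysis is deferred to the appendix as the excerpt indicates. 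Everything else — boundedness, weak limit solves the equation, Brezis--Lieb splitting, and the final contradiction — is by now standard and I would present it compactly.
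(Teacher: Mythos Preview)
Your outline is correct in spirit but takes a much longer route than the paper, and it leaves one case only half-argued.

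The paper's proof does not redo the bubble decomposition at all. It observes that Han--Li \cite[Lemma~1.2]{han-li2} already gives: (a) any Palais--Smale sequence below $S_c$ has a subsequence weakly converging to a (possibly trivial) solution $u$ of \eqref{eq:main_eq_on_M}, and (b) if that weak limit is $0$, the convergence is strong. Hence the only thing left to prove is that when $u\not\equiv 0$ one can upgrade weak to strong convergence. To do this the paper shows that $w_i:=u_i-u$ is itself a Palais--Smale sequence with
\[
\lim_{i\to\infty} I[w_i]=L-\int_0^1 I'[(1-t)u]\,u\,dt=L-I[u]\le L<S_c,
\]
the inequality $I[u]\ge 0$ coming from the monotonicity $\frac{d}{dt}I[tu]=2t(1-t^{2/(n-2)})\big[E[u]+4n(n-1)t^{2/(n-2)}\!\int_M u^{2n/(n-2)}\big]\ge 0$ on $[0,1]$. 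Since $w_i\rightharpoonup 0$, Han--Li's Case~(ii) applied to $\{w_i\}$ gives $w_i\to 0$ strongly, finishing the proof in a few lines. Your Brezis--Lieb splitting and the inequality $I[u]\ge 0$ arrive at the same energy estimate for $v_i=u_i-u$, so up to that point the two arguments coincide; the divergence is that you then extract bubbles by hand, whereas the paper simply quotes Han--Li for the remainder.

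The place where your proposal is genuinely incomplete is the interior-bubble energy comparison. You write that an interior bubble carries energy ``$8(n-1)A$'', but $A=\int_{\mathbb{R}^n_+}W_\e^{2n/(n-2)}$ depends on $T_c$; the correct interior quantum is $8(n-1)\int_{\mathbb{R}^n}W_0^{2n/(n-2)}=16(n-1)A_0$ with $A_0=A|_{T_c=0}$, and one must check $16(n-1)A_0\ge S_c$ for every $c$. This is true (and is contained in Han--Li's analysis), but your case split ``$cB\le 0$ vs.\ otherwise'' does not settle it, and the line ``an interior bubble forces an additional boundary contribution'' is not an argument. If you want to keep the self-contained bubble approach you must supply this inequality; otherwise, the cleaner move is exactly the paper's: once you have shown $I[u_i-u]\to L-I[u]<S_c$ and $I'[u_i-u]\to 0$, invoke Han--Li's Lemma~1.2 for the sequence $u_i-u$ and avoid the interior/boundary bubble bookkeeping altogether.
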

Traced back to Han-Li's \cite[Lemma 1.2]{han-li2}, the original statement of Case \textit{(i)} is that $\{u_i\}$ weakly converges in $H^1(M,g_0)$ to some solution $u$ of \eqref{eq:main_eq_on_M}. Inspired by Han-Li's idea, we present a proof in Appendix \ref{Appen} to conclude Lemma \ref{lem:compactness}.

\begin{convention} 
Let $a,b,c,\cdots$ range from $1$ to $n-1$ and $i,j,k,\cdots$ from $1$ to $n$. For small $\rho>0$, let
\begin{align*}
B_\rho^+(0)=B_\rho(0)\cap \mathbb{R}^n_+;\qquad &\partial^+ B_\rho^+(0)=\partial B_\rho^+(0)\cap \mathbb{R}^n_+;\\
D_\rho(0)=\partial B_\rho^+(0)\backslash \partial^+ B_\rho^+(0)&
\end{align*}
and simplify $B_\rho^+(0),\partial^+ B_\rho^+(0)$, $D_\rho(0)$ by $B_\rho^+$, $\partial^+ B_\rho^+$, $D_\rho$ without otherwise stated. 

Given any $x_0 \in \pa M$ and any integer  $N \geq 1$, it follows from F. Marques \cite[Proposition 3.1]{marques1} that there exists $g_{x_0} \in [g_0]$ such that under $g_{x_0}$-Fermi coordinates $\{(y^1,\cdots,y^n);y^n>0\}$, for small $|y|$ there hold
\begin{equation}\label{vol_conf_Fermi_coor}
d\mu_{g_{x_0}}=(1+O(|y|^N))dy \mathrm{~~and~~} h_{g_{x_0}}=O(|y|^{N-1}) \mathrm{~~near~~} x_0.
\end{equation}
In this paper, $N=2d+2$ is enough for our use. Let $\Psi_{x_0}:B_{2\rho}^+\to M$ be a smooth map and set $x=\Psi_{x_0}(y)$, $\bar y=(y^1,\cdots,y^{n-1})$ are geodesic normal coordinates on $\pa M$ centered at $x_0$ and $\exp_{x_0}(-y^n\nu_{g_{x_0}}(x_0))\in M$ for small $y^n>0$. Then $y=(\bar y, y^n)$ is so-called $g_{x_0}$-Fermi coordinates around $x_0$. In particular, $(g_{x_0})_{nn}=1$ and $(g_{x_0})_{an}=0$ in $B_{2\rho}^+$. We write $(g_{x_0})_{ij}=\exp(h_{ij})$ and set
\begin{equation}\label{Def:H}
h_{ij}=\sum_{|\alpha|=1}^d \pa^\alpha h_{ij} y^\alpha+O(|y|^{d+1}):=H_{ij}+O(|y|^{d+1}).
\end{equation}
Denote by $\Omega_\rho=\Psi_{x_0}(B_\rho^+)$ the coordinate half-ball of radius $\rho$ under the Fermi coordinates around $x_0$. We adopt Einstein summation convention. 
\end{convention}

\section{The case of non-empty $\mathcal{Z}$ with PMT}\label{Sect3}

Indeed, our previous paper \cite{Chen-Sun} paves a way of giving a positive answer to the Han-Li conjecture with some natural assumptions imposed on manifolds. As mentioned earlier, the test function constructed in \cite{Chen-Sun} will play an important role in solving the Han-Li conjecture. For convenience, we collect some useful estimates therein and extend these estimates to non-positive $c$.

A simple but vital estimate for $W_\e$ is that for all $c \in \mathbb{R}$, there exists $C=C(T_c,n)>0$ such that
\begin{equation}\label{est:bubble_fcn}
C^{-1} \e^{\frac{n-2}{2}}(\e+|y|)^{2-n}\leq W_\e(y) \leq C \e^{\frac{n-2}{2}}(\e+|y|)^{2-n}, ~~y \in \mathbb{R}_+^n.
\end{equation}
For a simple proof, it is not hard to show \eqref{est:bubble_fcn} for $c \geq 0$. When $c<0$, i.e. $T_c>0$, it suffices to show
\begin{align*}
\e^2+|y-T_c\e \textbf{e}_n|^2\geq C^{-1} (\e+|y|)^2
\end{align*}
for some $C=C(T_c,n)>0$. To see this, given any $\delta\in (0,1)$, by Young's inequality we have
\begin{align*}
\e^2+|y-T_c\e \textbf{e}_n|^2=&\e^2(1+T_c^2)+|y|^2-2T_cy^n\e\\
\geq& \e^2[1+T_c^2(1-\delta^{-1})]+|\bar y|^2+(1-\delta)|y^n|^2.
\end{align*}
Then by choosing $\delta=2T_c^2/(1+2T_c^2)$, we obtain
\begin{equation*}
\e^2+|y-T_c\e \textbf{e}_n|^2\geq \min\left\{\tfrac{1}{2},\tfrac{1}{1+2T_c^2}\right\}(\e^2+|y|^2)\geq  \min\left\{\tfrac{1}{4},\tfrac{1}{2(1+2T_c^2)}\right\}(\e+|y|)^2
\end{equation*}
as desired.

We proceed to use the test function in \cite{Chen-Sun} and show that some required estimates therein remain true for non-positive $c$. Denote by $\chi(y)=\chi(|y|)$ a smooth cut-off function in $\overline{\mathbb{R}_+^n}$ with $\chi=1$ in $B_1^+$ and $\chi=0$ in $\mathbb{R}^n_+\backslash \overline{B_2^+}$. For any $\rho>0$, set $\chi_\rho(y)=\chi(|y|/\rho)$ for $y \in \mathbb{R}_+^n$. Let $G=G_{x_0}$ denote the Green's function of the conformal Laplacian of metric $g_{x_0}$ with pole $x_0\in \partial M$, coupled with a boundary condition, namely
\begin{align*}
\begin{cases}
\displaystyle -\frac{4(n-1)}{n-2}\Delta_{g_{x_0}}G_{x_0}+R_{g_{x_0}}G_{x_0}=0\,,&\mathrm{in~~}\:M\,,
\\
\displaystyle \frac{2}{n-2}\frac{\partial G_{x_0}}{\partial \nu_{g_{x_0}}} +h_{g_{x_0}}G_{x_0}=0\,,&\mathrm{on~~}\:\d M\backslash \{x_0\}\,.
\end{cases}
\end{align*}
We assume that $G$ is normalized such that $\lim_{y\to 0}G(\Psi_{x_0}(y))|y|^{n-2}=1$, where $\Psi_{x_0}$ is the conformal Fermi coordinates around $x_0$. In particular, $G$ is positive in $\overline M \setminus \{x_0\}$, see \cite[Appendix B]{Almaraz-Sun}.

As in \cite{Chen-Sun} we extend to define
\begin{equation}\label{test_fcn_Chen_Sun}
\ubar=[\chi_\rho(\U+\w)]\circ\Psi_{x_0}^{-1}+\epsilon^{\frac{n-2}{2}}(1-\chi_\rho)\circ\Psi_{x_0}^{-1}G
\end{equation}
for all $c \in \mathbb{R}$, where
\begin{equation}\label{eq:def:psi}
\w=\d_i\U V_i+\frac{n-2}{2n}\U \mathrm{div} V
\end{equation}
and $V$ is the vector field satisfying
\begin{align}\label{eq:V}
\begin{cases}
\sum\limits_{i=1}^{n}\pa_i\left[\U^{\crit}\Big(\chi_{\rho}H_{ij}-\d_iV_j-\d_jV_i+\frac{2}{n}(\mathrm{div} V)\delta_{ij}\Big)\right]=0,&\mathrm{~~in~~}\:\Rn,
\\
\d_nV_a=V_n=0,&\mathrm{~~on~~}\:\mathbb{R}^{n-1},
\end{cases}
\end{align}
where $1\leq i,j \leq n, 1\leq a\leq n-1$. Moreover, there holds
\begin{align}\label{est:V}
|\d^{\b}V(y)|\leq C(n,T_c, |\b|)\sum_{a,b=1}^{n-1}\sum_{|\a|=1}^{d}|\pa^\alpha h_{ab}|(\e+|y|)^{|\a|+1-|\b|}, \forall~ \beta.
\end{align}
Thus, using \eqref{est:V} and the expression \eqref{eq:bdry_bubble} of $\U$, in $B_{2\rho}^+$ we have
\begin{align}\label{est:w}
|\w(y)|\leq C(n,T_c)\epsilon^{\frac{n-2}{2}}\sum_{a,b=1}^{n-1}\sum_{|\alpha|=1}^d |\pa^\alpha h_{ab}|(\epsilon+|y|)^{|\alpha|+2-n}.
\end{align}

Under some natural conditions on manifolds,  we manage to show that the above $\ubar$ is a good candidate for \eqref{ieq:goal_less}. By choosing $\rho$ sufficiently small, $\ubar$ is non-negative by virtue of \eqref{est:w}.

We define symmetric trace-free 2-tensors $S$ and $T$ in $\overline \Rn$ by 
\begin{equation}\label{def:S:T}
S_{ij}=\d_iV_j+\d_jV_i-\frac{2}{n}\mathrm{div}V\delta_{ij}\quad\quad\mathrm{and}\quad\quad T=H-S\,.
\end{equation}

\begin{remark}
Though the following estimates \eqref{est:B_rho}, \eqref{energy_outside_ball}, \eqref{est:energy_mass} were proved in \cite{Chen-Sun} only for any positive $c$, thanks to estimate \eqref{est:bubble_fcn},  we can easily extend these ones to non-positive $c$ by following nearly the same lines in \cite{Chen-Sun}. 
\end{remark}

For the energy $E$ of this $\ubar$ in $\Omega_\rho=\Psi_{x_0}(B_\rho^+)$, denoted by $E[\ubar;\Omega_\rho]$ for brevity, it follows from \cite[Proposition 5.7]{Chen-Sun} that with a sufficiently small $\rho_0>0$, there holds
\begin{align}\label{est:B_rho}
&\int_{B^+_\rho}\left[\frac{4(n-1)}{n-2}|\nabla(\U+\w)|_{g_{x_0}}^2+R_{g_{x_0}}(\U+\w)^2\right]dy\no\\
&+2(n-1)\int_{D_\rho}h_{g_{x_0}}(\U+\w)^2d\sigma\no\\
\leq &4n(n-1)\int_{B^+_\rho}\U^{\frac{4}{n-2}}\left(\U^2+\frac{n+2}{n-2}\w^2\right)dy\no\\
&+\frac{4(n-1)}{n-2}\int_{\partial^+B_\rho^+}\partial_i\U\U\frac{y^i}{|y|}d\sigma+\int_{\pa^+B_\rho^+}(\U^2\pa_j h_{ij}-\pa_j\U^2 h_{ij})\frac{y^i}{|y|}d\sigma\no\\
&-4(n-1)T_c\int_{D_\rho}\U^{\frac{2}{n-2}}\Big(\U^2+2\U\w+\frac{n}{n-2}\w^2-\frac{n-2}{8(n-1)^2}\U^2S_{nn}^2\Big)d\sigma\no\\
&-\frac{1}{2}\lambda^* \sum_{a,b=1}^{n-1}\sum_{|\alpha|=1}^d |\pa^\alpha h_{ab}|^2\epsilon^{n-2}\int_{B_\rho^+}(\epsilon+|y|)^{2|\alpha|+2-2n}dy\no\\
&+C\sum_{a,b=1}^{n-1}\sum_{|\alpha|=1}^d |\pa^\alpha h_{ab}|\epsilon^{n-2} \rho^{|\alpha|+2-n}+C\epsilon^{n-2} \rho^{2d+4-n}
\end{align}
for $0<2\epsilon<\rho<\rho_0\leq 1$, where $\lambda^\ast, \rho_0, C$ are positive constants depending only on $n,~T_c,~g_0$.

For the energy $E$ of $\ubar$ outside the region $\Omega_\rho$, denoted by $E[\ubar; M\backslash\Omega_\rho]$ for simplicity, using \cite[estimate (5.48)]{Chen-Sun} we have
\begin{align}\label{energy_outside_ball}
&\int_{M\backslash\Omega_\rho}\left[\tfrac{4(n-1)}{n-2}|\nabla\ubar|_{g_{x_0}}^2+R_{g_{x_0}}\ubar^2\right]d\mu_{g_{x_0}}\no\\
&+2(n-1)\int_{\partial M\backslash\Omega_\rho}h_{g_{x_0}}\ubar^2 d\sigma_{g_{x_0}}\no\\
\leq &\frac{4(n-1)}{n-2}\int_{\partial^+B_\rho^+}\left[-\partial_i\U\U+\partial_j\U\U h_{ij}-\epsilon^{\frac{n-2}{2}}(\U\partial_iG-G\partial_i\U)\right]\frac{y^i}{|y|} d\sigma\no\\
&+C\sum_{a,b=1}^{n-1}\sum_{|\alpha|=1}^d |\pa^\alpha h_{ab}|\rho^{|\alpha|+2-n}\epsilon^{n-2}+C\sum_{a,b=1}^{n-1}\sum_{|\alpha|=1}^d |\pa^\alpha h_{ab}|^2\rho^{2|\alpha|+2-n}\epsilon^{n-2}\no\\
&+C\rho^{2d+4-n}|\log \rho|^2\epsilon^{n-2}+C\rho^{2-n}\epsilon^{n-1}.
\end{align}

Since $d\mu_{g_{x_0}}=\big(1+O(|y|^{2d+2})\big)dy$ and $d\sigma_{g_{x_0}}=\big(1+O(|y|^{2d+2})\big)d\sigma$, then under conformal Fermi coordinates around $x_0\in \pa M$, we combine the above two estimates to obtain
\begin{align*}
&E[\ubar]\\
\leq &4n(n-1)\int_{B^+_\rho}\U^{\frac{4}{n-2}}\left(\U^2+\frac{n+2}{n-2}\w^2\right)dy\\
&-4(n-1)T_c\int_{D_\rho}\U^{\frac{2}{n-2}}\Big(\U^2+2\U\w+\frac{n}{n-2}\w^2-\frac{n-2}{8(n-1)^2}\U^2S_{nn}^2\Big)d\sigma\\
&+\int_{\pa^+B_\rho^+}(\U^2\pa_j h_{ij}+\frac{n}{n-2}\pa_j\U^2 h_{ij})\frac{y^i}{|y|}d\sigma\\
&-\frac{4(n-1)}{n-2}\epsilon^{\frac{n-2}{2}}\int_{\partial^+ B_\rho^+}(\U\partial_iG-G\partial_i\U)\frac{y^i}{|y|}d\sigma\\
&-\frac{1}{2}\lambda^* \sum_{a,b=1}^{n-1}\sum_{|\alpha|=1}^d |\pa^\alpha h_{ab}|^2\epsilon^{n-2}\int_{B_\rho^+}(\epsilon+|y|)^{2|\alpha|+2-2n}dy\\
&+C\sum_{a,b=1}^{n-1}\sum_{|\alpha|=1}^d |\pa^\alpha h_{ab}|\rho^{|\alpha|+2-n}\epsilon^{n-2}+C\rho^{2d+4-n}|\log \rho|^2 \epsilon^{n-2}+C\epsilon^{n-1}\rho^{2-n}.
\end{align*}
Using \cite[estimate (5.50)]{Chen-Sun}
\begin{align}\label{est:energy_mass}
&\int_{\pa^+B_\rho^+}(\U^2\pa_j h_{ij}+\frac{n}{n-2}\pa_j \U^2 h_{ij})\frac{y^i}{|y|}d\sigma\no\\
&-\frac{4(n-1)}{n-2}\epsilon^{\frac{n-2}{2}}\int_{\partial^+ B_\rho^+}(\U\partial_iG-G\partial_i\U)\frac{y^i}{|y|}d\sigma\no\\
&\leq -\epsilon^{n-2}\mathcal{I}(x_0,\rho)+C\sum_{a,b=1}^{n-1}\sum_{|\alpha|=1}^d |\pa^\alpha h_{ab}|\rho^{|\alpha|+1-n}\epsilon^{n-1}+C\epsilon^{n-1}\rho^{1-n},
\end{align}
where $\mathcal{I}(x_0,\rho)$ is defined in \cite[P. 32]{Chen-Sun},
we emphasize that for all $c \in \mathbb{R}$, there holds
\begin{align}\label{est:energy_psi=0}
&E[\ubar]\no\\
\leq &4n(n-1)\int_{B^+_\rho}\U^{\frac{4}{n-2}}\left(\U^2+\frac{n+2}{n-2}\w^2\right)dy\no\\
&-4(n-1)T_c\int_{D_\rho}\U^{\frac{2}{n-2}}\Big(\U^2+2\U\w+\frac{n}{n-2}\w^2-\frac{n-2}{8(n-1)^2}\U^2S_{nn}^2\Big)d\sigma\no\\
&-\e^{n-2}\mathcal{I}(x_0,\rho)-\frac{1}{2}\lambda^* \sum_{a,b=1}^{n-1}\sum_{|\alpha|=1}^d |\pa^\alpha h_{ab}|^2\epsilon^{n-2}\int_{B_\rho^+}(\epsilon+|y|)^{2|\alpha|+2-2n}dy\no\\
&+C\sum_{a,b=1}^{n-1}\sum_{|\alpha|=1}^d |\pa^\alpha h_{ab}|\rho^{|\alpha|+2-n}\epsilon^{n-2}+C\rho^{2d+4-n}|\log \rho|^2 \epsilon^{n-2}+C\epsilon^{n-1}\rho^{2-n}.
\end{align}

By definition \eqref{main_funtional_I} of the functional $I$, we need the expansions of the volumes of $M$ and $\pa M$ under conformal Fermi coordinates around $x_0 \in \pa M$.
\begin{lemma}\label{lem:exp_vol_M}
 If $0<\e\ll\rho<\rho_0$ for some sufficiently small $\rho_0$, there holds
\begin{equation*}
\int_M \ubar^{\frac{2n}{n-2}}d\mu_{g_{x_0}}=A+A_2+O(\e^3)+O(\e^{n}\rho^{-n}),
\end{equation*}
where 
\begin{equation*}
A_2=\frac{n(n+2)^2}{n-2}\int_{B_{\rho}^+}\U^{\frac{4}{n-2}}\psi^2 dy=O(\e^2).
\end{equation*}
\end{lemma}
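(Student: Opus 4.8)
The plan is to split the integral according to the two pieces of $\ubar$ in \eqref{test_fcn_Chen_Sun},
\[
\int_M\ubar^{\crit}\,d\mu_{g_{x_0}}=\int_{\Omega_\rho}\ubar^{\crit}\,d\mu_{g_{x_0}}+\int_{M\setminus\Omega_\rho}\ubar^{\crit}\,d\mu_{g_{x_0}},
\]
observing that on $\Omega_\rho=\Psi_{x_0}(B_\rho^+)$ one has $\chi_\rho\equiv1$, hence $\ubar=(\U+\w)\circ\Psi_{x_0}^{-1}$ there. On the outer region I would use only crude pointwise bounds: near $x_0$ both $\U+\w$ and $\epsilon^{(n-2)/2}G$ are $O\big(\epsilon^{(n-2)/2}(\epsilon+|y|)^{2-n}\big)$ -- for the bubble by \eqref{est:bubble_fcn} and \eqref{est:w}, for the Green's function by the normalization $\lim_{y\to0}G(\Psi_{x_0}(y))|y|^{n-2}=1$ -- while away from $x_0$ simply $\ubar\le C\epsilon^{(n-2)/2}$. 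Raising to the power $\crit$ and integrating over $\{|y|>\rho\}$ and over $M\setminus\Omega_{2\rho}$, and noting that on $\Omega_{2\rho}\setminus\Omega_\rho$ the function $\ubar$ is bounded by a multiple of $\epsilon^{(n-2)/2}\rho^{2-n}$, one obtains a total outer contribution of order $O(\epsilon^n\rho^{-n})$ (the constant-order piece $\epsilon^n\,\mathrm{vol}(M)$ being absorbed since $\rho<1$).

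On $\Omega_\rho$ I would work in $g_{x_0}$-Fermi coordinates and use $d\mu_{g_{x_0}}=(1+O(|y|^{2d+2}))dy$ from \eqref{vol_conf_Fermi_coor}. The contribution of the remainder factor is controlled by $C\int_{B_\rho^+}\U^{\crit}|y|^{2d+2}\,dy=O(\epsilon^3)$ for $n\ge4$ (for $n=3$ one simply enlarges $N$ in \eqref{vol_conf_Fermi_coor}, which is permitted). Since \eqref{est:w} gives $|\w|\le C\rho\,\U$ on $B_{2\rho}^+$ once $\rho$ is small, I can expand $(\U+\w)^{\crit}$ about $\w=0$, the cubic remainder $O\big(\U^{(6-n)/(n-2)}|\w|^3\big)$ integrating to $O(\epsilon^3)$ by \eqref{est:bubble_fcn} and \eqref{est:w}. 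The zeroth-order term is $\int_{B_\rho^+}\U^{\crit}dy=A-\int_{\mathbb{R}^n_+\setminus B_\rho^+}\U^{\crit}dy=A-O(\epsilon^n\rho^{-n})$ by \eqref{def:volumes}, and the second-order term is $A_2$, for which \eqref{est:bubble_fcn} and \eqref{est:w} give $\U^{4/(n-2)}\w^2\le C\epsilon^n(\epsilon+|y|)^{2-2n}$, hence $A_2=O(\epsilon^2)$.

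The one genuinely delicate step is the first-order term $\tfrac{2n}{n-2}\int_{B_\rho^+}\U^{\frac{n+2}{n-2}}\w\,dy$: a direct pointwise bound gives only $O(\epsilon)$, far too large. One must exploit the divergence structure \eqref{eq:def:psi} of $\w$. Using $\U^{\frac{n+2}{n-2}}\partial_i\U=\tfrac{n-2}{2n}\partial_i(\U^{\crit})$ and integrating the first summand of $\w$ by parts, the resulting interior term $-\int_{B_\rho^+}\U^{\crit}\,\mathrm{div}\,V\,dy$ cancels exactly against the contribution of the second summand $\tfrac{n-2}{2n}\U\,\mathrm{div}\,V$, while the $D_\rho$-part of the boundary term vanishes because $V_n=0$ on $\mathbb{R}^{n-1}$ by \eqref{eq:V}; what survives is $\int_{\partial^+B_\rho^+}\U^{\crit}V_i\frac{y^i}{|y|}\,d\sigma$, which by \eqref{est:V} is $O(\epsilon^n\rho^{1-n})=O(\epsilon^n\rho^{-n})$. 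Collecting the zeroth-, first- and second-order terms together with the volume-form error, the cubic remainder and the outer contribution yields $A+A_2+O(\epsilon^3)+O(\epsilon^n\rho^{-n})$, as claimed. The main obstacles I anticipate are precisely this integration-by-parts cancellation -- which is what makes the boundary condition $V_n=0$ indispensable, since without it the linear term would already destroy the expansion -- and the borderline error accounting in low even dimensions, where logarithmic factors such as $\epsilon^n\log(\rho/\epsilon)$ appear and must be verified to be $o(\epsilon^2)$.
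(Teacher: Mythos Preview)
Your proposal is correct and follows essentially the same route as the paper: the paper also splits off the outer region as $O(\epsilon^n\rho^{-n})$, passes to Fermi coordinates with the volume-form error absorbed, Taylor-expands $(\U+\w)^{\crit}$ to second order with an $O(\epsilon^3)$ cubic remainder, and handles the linear term via the identity $\tfrac{2n}{n-2}\U^{\frac{n+2}{n-2}}\w=\mathrm{div}(\U^{\crit}V)$ together with $V_n=0$ on $D_\rho$, which is exactly your integration-by-parts cancellation written in closed form. Your worry about logarithmic borderline errors is unnecessary here---none actually arise in this lemma---but otherwise the argument matches.
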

\begin{proof}
By \eqref{vol_conf_Fermi_coor} we obtain
\begin{align*}
\int_M \ubar^{\frac{2n}{n-2}}d\mu_{g_{x_0}}=&\int_{\Omega_{\rho}} ((\U+\w)\circ\Psi_{x_0}^{-1})^{\frac{2n}{n-2}}d\mu_{g_{x_0}}+O(\e^n\rho^{-n})\\
=&\int_{B_\rho^+}(\U+\psi)^{\frac{2n}{n-2}}dy+O(\e^n \rho^{N-n})+O(\e^n\rho^{-n}).
\end{align*}
Notice that $|\psi(y)|\leq (\e+|y|)\U(y)$ in $B_{2\rho}^+$. Observe that
\begin{align*}
&\int_{B_\rho^+}(\U+\psi)^{\frac{2n}{n-2}}dy\\
=&\int_{B_\rho^+}\U^{\frac{2n}{n-2}}dy+\frac{2n}{n-2}\int_{B_\rho^+}\U^{\frac{n+2}{n-2}}\psi dy+\frac{n(n+2)}{(n-2)^2}\int_{B_\rho^+} \U^{\frac{4}{n-2}}\psi^2dy+O(\e^3)\\
:=&A_0+A_1+A_2+O(\e^3).
\end{align*}
It is easy to show
\begin{align}\label{volume_I_1}
A_0=A+O(\e^n\rho^{-n}).
\end{align}
By definition \eqref{eq:def:psi} of $\psi$, we have
$$\frac{2n}{n-2}\U^{\frac{n+2}{n-2}}\psi=\mathrm{div}(\U^{\frac{2n}{n-2}}V).$$
Since $V_n=0$ on $D_\rho$, an integration by parts together with \eqref{est:V} gives
\begin{align*}
A_1=&\int_{B_\rho^+}\mathrm{div}(\U^{\frac{2n}{n-2}}V) dy=\int_{\partial^+ B_\rho^+}\U^{\frac{2n}{n-2}}V_i\frac{y^i}{|y|}d\sigma=O(\e^{n}\rho^{1-n}).
\end{align*}
By collecting all the above estimates, the desired assertion follows.
\end{proof}
\begin{lemma} \label{lem:exp_vol_bdry}
If $0<\e\ll\rho<\rho_0$ for some sufficiently small $\rho_0$, there holds
\begin{align*}
\int_{\pa M}\ubar^{\frac{2(n-1)}{n-2}}d\sigma_{g_{x_0}}=&B+B_1+B_2+O(\e^3)+O(\e^{n-1}\rho^{1-n}),
\end{align*}
where
\begin{align*}
B_1=\frac{2(n-1)}{n-2}\int_{D_{\rho}}\U^{\frac{n}{n-2}}\psi d\sigma,\quad B_2=\frac{n(n-1)}{(n-2)^2}\int_{D_\rho}\U^{\frac{2}{n-2}}\psi^2d\sigma.
\end{align*}
\end{lemma}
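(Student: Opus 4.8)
The plan is to follow the proof of Lemma~\ref{lem:exp_vol_M} line by line, with the interior ball $B_\rho^+$ replaced by the boundary disk $D_\rho$ and $d\mu_{g_{x_0}}$ by $d\sigma_{g_{x_0}}$. First I would discard the contribution of $\pa M$ away from $x_0$: there $\ubar=\e^{\frac{n-2}{2}}(1-\chi_\rho)\circ\Psi_{x_0}^{-1}\,G\le\e^{\frac{n-2}{2}}G$, and $G$ is positive and smooth on $\overline M\setminus\{x_0\}$ with $G(\Psi_{x_0}(y))\,|y|^{n-2}\to1$ as $y\to0$. Splitting $\pa M\setminus\Omega_\rho$ into the transition annulus $\Psi_{x_0}(D_{2\rho}\setminus D_\rho)$ — on which every summand of $\ubar$ is $O(\e^{\frac{n-2}{2}}\rho^{2-n})$ while the area is $O(\rho^{n-1})$ — and its complement, on which $\ubar=\e^{\frac{n-2}{2}}G$ with $G$ bounded or of size $|y|^{2-n}$, yields
$$\int_{\pa M\setminus\Omega_\rho}\ubar^{\frac{2(n-1)}{n-2}}\,d\sigma_{g_{x_0}}=O(\e^{n-1}\rho^{1-n}).$$
Since $\chi_\rho\equiv1$ on $D_\rho$ and $d\sigma_{g_{x_0}}=(1+O(|y|^N))\,d\sigma$ with $N=2d+2$, the problem reduces to expanding $\int_{D_\rho}(\U+\w)^{\frac{2(n-1)}{n-2}}\,d\sigma$.

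Next I would Taylor expand on $D_\rho$. By \eqref{est:w} we have $|\w(y)|\le(\e+|y|)\U(y)$ in $B_{2\rho}^+$, hence $|\w/\U|\le\frac12$ on $D_\rho$ once $\rho$ is small, and
$$(\U+\w)^{\frac{2(n-1)}{n-2}}=\U^{\frac{2(n-1)}{n-2}}+\frac{2(n-1)}{n-2}\U^{\frac{n}{n-2}}\w+\frac{n(n-1)}{(n-2)^2}\U^{\frac{2}{n-2}}\w^{2}+O\!\Big(\U^{\frac{2(n-1)}{n-2}}(\e+|y|)^{3}\Big).$$
Integrating the leading term gives $\int_{D_\rho}\U^{\frac{2(n-1)}{n-2}}\,d\sigma=B+O(\e^{n-1}\rho^{1-n})$, because $B=\int_{\R^{n-1}}\U^{\frac{2(n-1)}{n-2}}\,d\sigma$ is $\e$-independent and, by \eqref{est:bubble_fcn}, its tail over $|y|>\rho$ is $O(\e^{n-1}\rho^{1-n})$; the next two terms are, by definition, exactly $B_1$ and $B_2$.

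It then remains to show that the cubic Taylor remainder over $D_\rho$, together with the product of the full expansion with the area-form factor $O(|y|^N)$, is absorbed into $O(\e^3)+O(\e^{n-1}\rho^{1-n})$. For this I would invoke \eqref{est:bubble_fcn} in the form $\U^{\frac{2(n-1)}{n-2}}\le C\e^{n-1}(\e+|y|)^{2-2n}$, which turns both contributions into elementary radial integrals over $(0,\rho)$; rescaling $y=\e z$ disposes of the region $|y|\lesssim\e$ (generating the $O(\e^3)$) and a direct estimate disposes of the region $|y|\gtrsim\e$ (generating the $O(\e^{n-1}\rho^{1-n})$), the inequality $N\ge n-1$ being used for the area-form factor. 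This bookkeeping of error exponents, together with the exterior estimate that leans on positivity and the precise leading asymptotics of $G$, is the only genuinely technical step; everything else is a transcription of the proof of Lemma~\ref{lem:exp_vol_M}.
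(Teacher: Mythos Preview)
Your approach is correct and essentially identical to the paper's: discard the contribution on $\pa M\setminus\Omega_\rho$ as $O(\e^{n-1}\rho^{1-n})$, pass from $d\sigma_{g_{x_0}}$ to $d\sigma$ via \eqref{vol_conf_Fermi_coor}, and Taylor-expand $(\U+\w)^{\frac{2(n-1)}{n-2}}$ to second order in $\w$. One minor bookkeeping slip: the outer region $|y|\gtrsim\e$ of the cubic remainder actually contributes $O(\e^3)$ (for $n\ge5$) rather than $O(\e^{n-1}\rho^{1-n})$, but since $O(\e^3)$ is already part of the target error this does not affect the conclusion.
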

\begin{proof} By \eqref{vol_conf_Fermi_coor} we have
\begin{align*}
\int_{\partial M}\ubar^{\frac{2(n-1)}{n-2}}d\sigma_{g_{x_0}}=&\int_{\partial\Omega_\rho\cap\partial M}((\U+\psi)\circ\Psi_{x_0}^{-1})^{\frac{2(n-1)}{n-2}}d\sigma_{g_0}+O(\e^{n-1}\rho^{1-n})\\
=&\int_{D_\rho }(\U+\psi)^{\frac{2(n-1)}{n-2}}d\sigma+O(\e^{n-1}\rho^{N-n+1})+O(\e^{n-1}\rho^{1-n}).
\end{align*}
Notice that
\begin{align*}
&\int_{D_\rho}(\U+\psi)^{\frac{2(n-1)}{n-2}}d\sigma\\
=&\int_{D_\rho}\U^{\frac{2(n-1)}{n-2}}d\sigma+\frac{2(n-1)}{n-2}\int_{D_\rho}\U^{\frac{n}{n-2}}\psi d\sigma\\
&+\frac{(n-1)n}{(n-2)^2}\int_{D_\rho}\U^{\frac{2}{n-2}}\psi^2d\sigma+O(\e^3)\\
=&B+B_1+B_2+O(\e^3)+O(\e^{n-1}\rho^{1-n}).
\end{align*}
Hence we combine these two estimates to obtain the desired estimate. 
\end{proof}

We will use a notion of a {\it mass} associated to manifolds with boundary.
\begin{definition}\label{def:asym}
Let $(N, g)$ be a Riemannian manifold with a non-compact boundary $\pa N$. 
We say that $N$ is {\it{asymptotically flat}} with order $p>0$, if there exist a compact set $N_0\subset N$ and a diffeomorphism $F: N\backslash N_0\to \Rn\backslash \overline{B^+_1(0)}$ such that, in the coordinate chart defined by $F$ (called {\it  asymptotic coordinates} of $N$), there holds
$$
|g_{ij}(y)-\delta_{ij}|+|y||\pa_k g_{ij}(y)|+|y|^2|\pa^2_{kl}g_{ij}(y)|=O(|y|^{-p}),\mathrm{~~as~~}|y|\to\infty,
$$
where $i,j,k,l=1,\cdots,n, B_1^+(0)=B_1(0)\cap \mathbb{R}_+^n$.
\end{definition}

Provided that the following limit
\begin{align*}
&m(g):=\\
&\lim_{R\to\infty}\left[
\int\limits_{\{y\in\Rn;\, |y|=R\}}\sum_{i,j=1}^{n}(g_{ij,j}-g_{jj,i})\frac{y^i}{|y|}\,d\sigma
+\int\limits_{\{y\in\mathbb{R}^{n-1};\, |y|=R\}}\sum_{a=1}^{n-1}g_{na}\frac{y^a}{|y|}\,d\sigma\right]
\end{align*}
exists, we call it the {\it{mass}} of $(N, g)$.  The following positive mass type conjecture has been verified by  Almaraz-Barbosa-de Lima \cite{almaraz-barbosa-lima}, provided that $3\leq n\leq 7$ or $n\geq 8$ and $N$ is spin. 
\begin{conjecture}[\textbf{Positive mass with a non-compact boundary}]
If $(N,g)$ is asymptotically flat with order $p>(n-2)/2$ and $R_{g}\geq 0, h_g\geq 0$, then $m(g)\geq 0$, equality holds if and only if $(N,g)$ is isometric to $(\mathbb{R}_+^n,g_{\mathbb{R}^n})$.
\end{conjecture}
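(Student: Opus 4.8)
The plan is to prove the theorem in two regimes, mirroring the classical positive mass theorem, after a preliminary reduction that applies in both. Since $p>(n-2)/2$, the number $m(g)$ is a well-defined geometric invariant, and one first establishes a density theorem: every asymptotically flat $(N,g)$ with $R_g\geq 0$, $h_g\geq 0$ is approximated, in a suitable weighted norm, by metrics $\tilde g$ that are \emph{harmonically flat} near infinity (conformal to the Euclidean half-space outside a compact set) with $R_{\tilde g}=0$, $h_{\tilde g}=0$ and $|m(\tilde g)-m(g)|$ as small as we wish. This is done by a conformal change solving a linear elliptic problem for the pair $(L_g,B_g)$ together with a deformation of the metric near infinity; the hypotheses $R_g\geq 0$, $h_g\geq 0$ guarantee the invertibility needed on the relevant weighted spaces. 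Hence we may assume that $g$ is scalar-flat, has minimal boundary, and is harmonically flat at infinity.

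For $3\leq n\leq 7$ I would argue by contradiction with a free-boundary minimal hypersurface descent in the spirit of Schoen--Yau. Assume $m(g)<0$; a further conformal change yields a scalar-flat metric with minimal boundary whose asymptotics are $g=(1+A|y|^{2-n}+\cdots)\delta$ with $A<0$, and this forces the existence of a properly embedded hypersurface $\Sigma^{n-1}\subset N$ meeting $\partial N$ orthogonally, asymptotic to a coordinate half-hyperplane perpendicular to $\partial N$, and minimizing area with free boundary among such hypersurfaces; negativity of $A$ is exactly what prevents the minimizer from being the flat half-hyperplane. Then $\Sigma$ is stable, and combining the free-boundary second variation formula with the Gauss equation and $R_g\geq 0$, $h_g=0$, one shows that the Schr\"odinger operator $-\Delta_\Sigma+\tfrac12 R_\Sigma$ with the natural Robin boundary condition $\partial_\eta+H_{\partial\Sigma}$ — where $H_{\partial\Sigma}=-\mathrm{II}^{\partial N}(\nu,\nu)$ is the mean curvature of $\partial\Sigma$ inside $\Sigma$ — is non-negative; together with the minimizing property and the $m(g)<0$ asymptotics this lets one conformally deform $\Sigma$ into an $(n-1)$-dimensional asymptotically flat manifold with boundary, non-negative scalar curvature, non-negative boundary mean curvature, and negative mass, contradicting the inductive hypothesis in dimension $n-1$; the base case $n=3$ ($\Sigma$ a surface with boundary) is settled by a Gauss--Bonnet argument. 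Alternatively, for $3\leq n\leq 7$ one can double $(N,g)$ across $\partial N$ to get a boundaryless asymptotically flat manifold $(\hat N,\hat g)$ with one end modeled on $\mathbb{R}^n\setminus\overline{B_1}$: since $h_g\geq 0$ with respect to the outward normal, $\hat g$ is Lipschitz with distributionally non-negative scalar curvature, and a careful computation of the flux integral, in which the boundary term in the definition of $m(g)$ accounts precisely for the corner along the gluing hypersurface, gives $m_{\mathrm{ADM}}(\hat g)=2m(g)$, so the positive mass theorem for $C^0$ metrics applies.

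For $n\geq 8$ with $N$ spin I would use Witten's spinorial argument. Equip $N$ with a spinor bundle and Dirac operator $D$, and impose on $\partial N$ a local elliptic boundary condition of MIT-bag / chirality type $\mathcal{B}\psi=0$ chosen so that in the integrated Lichnerowicz--Weitzenb\"ock identity the boundary contribution is a positive multiple of $\int_{\partial N}h_g|\psi|^2\,d\sigma$. Because $R_g\geq 0$ and $h_g\geq 0$, the boundary value problem $(D,\mathcal{B})$ is injective on the appropriate weighted Sobolev space, so $D\psi=0$ can be solved on $N$ with $\mathcal{B}\psi=0$ and $\psi\to\psi_0$ at infinity for any constant spinor $\psi_0$ of flat space. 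Integrating the Weitzenb\"ock identity then gives
\[
c_n\,m(g)\,|\psi_0|^2=\int_N\Big(|\nabla\psi|^2+\tfrac14 R_g|\psi|^2\Big)\,dv+(\text{non-negative boundary term})\geq 0,
\]
whence $m(g)\geq 0$.

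For rigidity, $m(g)=0$ forces (in the spin case) $\nabla\psi\equiv 0$ for every $\psi_0$, hence a maximal family of parallel spinors and flatness of $(N,g)$; the boundary term must also vanish, giving $h_g\equiv 0$ and, with the boundary condition, $\mathrm{II}^{\partial N}\equiv 0$, so $\partial N$ is totally geodesic, and a flat, asymptotically flat half-space with totally geodesic non-compact boundary must be $(\mathbb{R}^n_+,\delta)$; in low dimensions the same conclusion follows by tracing the equality case through the minimal-hypersurface (or $C^0$-mass) argument. I expect the principal obstacles to be: in low dimensions, the existence, properness, orthogonal boundary intersection, and curvature/regularity estimates for the free-boundary area-minimizing hypersurface asymptotic to a hyperplane, together with the bookkeeping needed to verify that the induced structure on $\Sigma$ genuinely satisfies the lower-dimensional hypotheses so that the descent closes; and in the spin case, exhibiting a boundary condition that is simultaneously elliptic, compatible with the asymptotics, and produces the correct sign for the $h_g$-term, and then solving the Dirac equation with it on the weighted spaces.
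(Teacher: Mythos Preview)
The paper does not contain a proof of this statement. It is recorded there as a \emph{conjecture} and the authors immediately note that it ``has been verified by Almaraz--Barbosa--de Lima \cite{almaraz-barbosa-lima}, provided that $3\leq n\leq 7$ or $n\geq 8$ and $N$ is spin.'' The result is then used as a black box (via \cite[Theorem 1.3]{almaraz-barbosa-lima} and \cite{escobar4}) in the proof of Theorem~\ref{thm:Han-Li_lower_dims}. So there is no ``paper's own proof'' to compare your proposal against.

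That said, your outline is a reasonable high-level sketch of the strategy actually carried out in \cite{almaraz-barbosa-lima}: a density/harmonically-flat reduction, a Schoen--Yau style free-boundary minimal hypersurface descent in low dimensions, and a Witten-type spinorial argument with a chirality (MIT-bag) boundary condition in the spin case, leading to a Weitzenb\"ock identity with a boundary term controlled by $h_g$. The ``doubling'' alternative you mention is more delicate than you indicate --- with only $h_g\geq 0$ the doubled metric is merely Lipschitz and one must invoke a positive mass theorem for low-regularity metrics, together with a careful matching of the corner flux with the boundary term in $m(g)$; this route is not the one taken in \cite{almaraz-barbosa-lima}. In any case, for the purposes of this paper the correct move is simply to cite \cite{almaraz-barbosa-lima} rather than to reproduce a proof.
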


For readers' convenience, we present a proof of the following elementary result, though it is maybe well-known to the experts in this field.
\begin{proposition}\label{prop:vanishing_order_metric}
For $n\geq 3$ and $x_0 \in \mathcal{Z}$, let $g_{x_0} \in [g_0]$ be the metric induced by the conformal Fermi coordinates around $x_0$, then there holds $(g_{x_0})_{ij}=\delta_{ij}+O(|y|^{d+1})$ near $x_0$.
\end{proposition}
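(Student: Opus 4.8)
The plan is to work in the $g_{x_0}$-Fermi coordinates $y=(\bar y,y^n)$ around $x_0$ supplied by Marques \cite[Proposition 3.1]{marques1}, write $g=g_{x_0}$ for brevity, and prove by induction on $m\in\{1,\dots,d\}$ that every Taylor coefficient of $g_{ij}$ at $x_0$ of order $m$ vanishes. Recall that in these coordinates $g_{nn}\equiv1$, $g_{an}\equiv0$, $d\mu_{g}=(1+O(|y|^{N}))dy$ and $h_{g}=O(|y|^{N-1})$ with $N=2d+2$. The starting point is the classical expansion of a metric in Fermi coordinates: expanding the boundary metric $g_{ab}(\bar y,0)$ in geodesic normal coordinates of $\partial M$ and integrating the Riccati equation along the geodesics issuing normally from $\partial M$, the order-$m$ Taylor coefficient of $g_{ij}$ at $x_0$ is a universal polynomial in the covariant derivatives $\nabla^{k}\mathrm{Rm}_g(x_0)$ with $k\le m-2$ and the tangential covariant derivatives $\nabla^{j}\pi_g(x_0)$ with $j\le m-1$ (normal derivatives of $\pi_g$ being eliminated via Riccati, and intrinsic curvatures of $\partial M$ being reduced to ambient ones via the Gauss equation). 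I would then exploit the decomposition of $\mathrm{Rm}_g$ into its Weyl part $W_g$ plus a Kulkarni--Nomizu-type term built from the Schouten tensor $\mathrm{Sch}_g=\tfrac{1}{n-2}(\mathrm{Ric}_g-\tfrac{R_g}{2(n-1)}g)$, and of $\pi_g$ into $\mathring\pi_g$ plus $h_g$ times the boundary metric.

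The next observation is that the two conditions defining $\mathcal{Z}$ are conformally invariant, since $W$ is pointwise conformally invariant as a $(1,3)$-tensor and $\mathring\pi$ is conformally covariant; thus the vanishing orders of $|W|$ and $|\mathring\pi|$ at $x_0$ are unchanged on passing from $g_0$ to $g$. Hence $x_0\in\mathcal{Z}$ forces $|W_g(x)|=o(|y|^{d-2})$ as $x\to x_0$ in $M$ and $|\mathring\pi_g(x)|=o(|y|^{d-1})$ as $x\to x_0$ in $\partial M$, so that $\nabla^{k}W_g(x_0)=0$ for $k\le d-2$ and $\nabla^{j}\mathring\pi_g(x_0)=0$ for $j\le d-1$. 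The base case $m=1$ is then immediate: the first tangential derivatives of the boundary metric vanish in geodesic normal coordinates, while $\pi_g(x_0)=\mathring\pi_g(x_0)+h_g(x_0)\,g|_{\partial M}(x_0)=0$ because $\mathring\pi_g(x_0)=0$ from $\mathcal{Z}$ and $h_g(x_0)=0$ from $h_g=O(|y|^{N-1})$.

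For the inductive step I would assume $g_{ij}=\delta_{ij}+O(|y|^{m})$ for some $2\le m\le d$. Then the Christoffel symbols of $g$ vanish at $x_0$ (in fact to order $m-1$), so at $x_0$ covariant derivatives of bounded order of any tensor coincide with ordinary ones; consequently the order-$m$ coefficient of $g_{ij}$ is a polynomial in the order-$(m-2)$ Taylor coefficient of $\mathrm{Rm}_g$ and the order-$(m-1)$ Taylor coefficient of $\pi_g$ at $x_0$. Substituting the two decompositions, the $W_g$- and $\mathring\pi_g$-contributions vanish (the relevant orders $m-2\le d-2$, $m-1\le d-1$ being admissible), and the $h_g$-contribution vanishes because $h_g=O(|y|^{N-1})$ with $N-1\ge d>m-1$; hence the order-$m$ coefficient of $g_{ij}$ is determined by the order-$(m-2)$ Taylor coefficient of $\mathrm{Ric}_g$ at $x_0$ alone. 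To annihilate the latter I would use the volume normalization: since $g_{nn}=1,g_{an}=0$ we have $\det g_{ij}=\det g_{ab}$, so $\log\det g_{ij}=O(|y|^{N})$, and under the inductive hypothesis its order-$m$ part ($m<N$) is a polynomial in $y$ whose coefficients are exactly the traces and contractions of $\nabla^{m-2}\mathrm{Ric}_g(x_0)$ prescribed by the Gauss and Riccati equations, all of which therefore vanish. Feeding these scalar identities back into the Gauss equation (whose lower-order-in-$\pi_g$ part identifies the intrinsic curvature data of $\partial M$ at $x_0$ with tangential components of $\mathrm{Ric}_g,\mathrm{Rm}_g$, while the normal Riccati terms recover $\mathrm{Ric}_g(\nu_g,\nu_g)$ and its derivatives), together with $W_g$ already annihilated, one pins down every component of $\nabla^{m-2}\mathrm{Ric}_g(x_0)$, concludes it vanishes, and hence the order-$m$ coefficient of $g_{ij}$ vanishes, closing the induction. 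For $m=2$ concretely: the volume expansion gives $\overline{\mathrm{Ric}}_{\partial M}(x_0)=0$ and $\mathrm{Ric}_g(\nu_g,\nu_g)(x_0)=0$; tracing the Gauss equation then yields $R_g(x_0)=0$, whence $\mathrm{Sch}_g(x_0)=0$.

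The step I expect to be the main obstacle is this last one: $d\mu_g=(1+O(|y|^{N}))dy$ is only a scalar condition, one equation per order, yet it must annihilate the full tensor $\nabla^{m-2}\mathrm{Ric}_g(x_0)$. What makes it go through is the trace structure of the Gauss equation — the intrinsic scalar and Ricci curvatures of $\partial M$ at $x_0$ reconstruct the tangential ambient Ricci components, the $(y^n)^2$-type coefficients of $\log\det g$ reconstruct $\mathrm{Ric}_g(\nu_g,\nu_g)$, and tracing couples these enough to recover $R_g(x_0)$ and thereby the whole Schouten tensor — organized order by order with careful bookkeeping of which quadratic-in-curvature contributions are discarded via the inductive hypothesis. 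This is precisely the manifold-with-boundary analogue of the computation behind Lee--Parker's conformal normal coordinates, with the second fundamental form additionally controlled through $\mathring\pi_g$ (via $\mathcal{Z}$) and $h_g$ (via the conformal Fermi normalization).
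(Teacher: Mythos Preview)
Your framework is sound up to the point you yourself flag as the obstacle, and that is where the argument genuinely stalls. The paper's proof takes a different, sharper route: writing $(g_{x_0})_{ij}=\exp(h_{ij})$ with $H_{ij}$ the degree-$\le d$ Taylor polynomial of $h_{ij}$, it introduces the \emph{linearized Weyl tensor}
\[
Z_{ijkl}=\pa_i\pa_k H_{jl}+\pa_j\pa_l H_{ik}-\pa_i\pa_l H_{jk}-\pa_j\pa_k H_{il}+\tfrac{1}{n-2}(A_{jl}\delta_{ik}+\cdots),
\]
shows from the hypothesis on $W_{g_0}$ that $Z\equiv0$ at each homogeneity, verifies from the $\mathring\pi$- and $h_{g_{x_0}}$-control that $\pa_n H_{ij}=0$ on $\{y^n=0\}$, and then invokes \cite[Proposition~2.3]{Brendle-Chen}: a trace-free $H$ with $H_{in}=0$, $Z(H)=0$ and $\pa_n H|_{y^n=0}=0$ must vanish identically. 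That single algebraic lemma replaces your entire ``Gauss/Riccati bookkeeping'' step.

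The gap in your scheme is that the volume normalization is precisely the trace-freeness of $H$ at each order --- nothing more --- and your assertion that this, fed through Gauss, ``pins down every component of $\nabla^{m-2}\mathrm{Ric}_g(x_0)$'' is equivalent to, not a substitute for, Brendle--Chen's Proposition~2.3. Concretely, already at $m=2$ your conclusion ``$R_g(x_0)=0$, whence $\mathrm{Sch}_g(x_0)=0$'' is unjustified: scalar curvature zero does not by itself kill Schouten. What is actually needed is to combine $\overline{\mathrm{Ric}}(x_0)=0$ and $\mathrm{Ric}_{nn}(x_0)=0$ with the traced Gauss equation $\mathrm{Ric}_{cd}=R_{ncnd}$ (valid when $\pi(x_0)=0$) and then with the explicit component $W_{ncnd}(x_0)=0$, which yields $\tfrac{n-3}{n-2}R_{ncnd}(x_0)=0$, hence $\mathrm{Ric}_{cd}(x_0)=0$ for $n\ge4$. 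For $m\ge3$ the analogous system is genuinely nontrivial, and carrying it out amounts to reproving \cite[Proposition~2.3]{Brendle-Chen} by hand. You should either invoke that result directly (as the paper does) or supply its proof; the informal appeal to ``careful bookkeeping'' does not close the gap.
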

\begin{proof}
When $n=3$, then $d=0$ and the assertion is trivial. Now without loss of generality, we assume $d \geq 1$. Under conformal $g_{x_0}$-Fermi coordinates near $x_0 \in \pa M$, it follows from F. Marques \cite[Proposition 3.1]{marques1} that $\det g_{x_0}=1+O(|y|^{2d+2})$ in $B_{2\rho}^+$. If we write $(g_{x_0})_{ij}=\exp(h_{ij})$ and 
$$h_{ij}=\sum_{|\alpha|=1}^d \pa^\alpha h_{ij} y^\alpha+O(|y|^{d+1}):=H_{ij}+O(|y|^{d+1}),$$
then $(g_{x_0})^{ij}h_{ij}=O(|y|^{2d+2})$ in $B_{2\rho}^+$. Moreover, the mean curvature $h_{g_{x_0}}$ satisfies
\begin{align*}
h_{g_{x_0}}=-\frac{1}{2(n-1)}(g_{x_0})^{ij}\partial_n(g_{x_0})_{ij}=-\frac{1}{2(n-1)}\partial_n(\log\det(g_{x_0}))=O(|x|^{2d+1}).
\end{align*}
 Indeed we can prove a little stronger result.
\begin{claim}
Suppose that $|W_{g_{x_0}}|_{g_{x_0}}=O(|y|^{k-1})$, $|\mathring\pi_{g_{x_0}}|_{g_{x_0}}=O(|y|^k)$ and $h_{ij}=H_{ij}+O(|y|^{k+1})$ near $x_0$, where $H_{ij}=\sum_{|\alpha|=1}^k\partial^\alpha h_{ij}(0)y^\alpha$ and $1\leq k\leq d$, then $H_{ij}=0$ near $x_0$.
\end{claim}
We prove it by induction on $k$. 

Suppose that $k=1$. Since $\pa_n (g_{x_0})_{ab}=-\frac{1}{2}(\pi_{g_{x_0}})_{ab}=O(|y|)$ near $x_0 \in \pa M$, then $\pa_n (g_{x_0})_{ab}(0)=0$, which implies $\partial_n h_{ab}(0)=0$. On the other hand, it follows from conformal Fermi coordinates that $\partial_c h_{ab}(0)=0$ for any $1\leq a,b,c\leq n-1$. Thus we obtain $H_{ab}=0$ near $x_0$. One consequence of Fermi coordinates is $H_{in}=0$ for $1\leq i\leq n$. Thus $k=1$ is proved.

 Assume that the claim holds for $k \geq 1$ and consider the case $k+1$. Firstly from induction assumption, we obtain
$$h_{ij}=H_{ij}+O(|y|^{k+2}),\quad \text{where }H_{ij}=\sum_{|\alpha|=k+1}\partial^\alpha h_{ij}(0)y^\alpha.$$
Notice that
\begin{equation}\label{eq:vanish_bdry_con}
\pa_n H_{ij}=\sum_{|\bar \alpha|=k}\pa^{(\bar \alpha,n)}h_{ij}(0)y^{\bar \alpha},\quad \mathrm{~~on~~} \{y^n=0\},
\end{equation}
where $\bar \alpha$ denotes a multi-index with components consisting of $\{1,2,\cdots,n-1\}$. Also we have $\partial^{\bar \alpha}\pa_n (g_{x_0})_{ab}=-\frac{1}{2}\partial^{\bar \alpha}(\pi_{g_{x_0}})_{ab}=O(|y|)$ for all $|\bar \alpha|=k$, by virtue of induction assumption for $k+1$ that $|\mathring{\pi}_{g_{x_0}}|_{g_{x_0}}=O(|y|^{k+1})$ and the mean curvature $h_{g_{x_0}}=O(|x|^{2d+1})$. Then $\partial_nH_{ab}=0$ in a small neighborhood on $\pa M$ of $x_0$.  Since we are using Fermi coordinates, then all $\partial_nH_{ij}=0$ on $\partial M $ near $x_0$.

As in \cite{Brendle2} (see also \cite{almaraz5}), in $B_{2\rho}^+$ we define
$$A_{ik}=\pa_i \pa_m H_{mk}+\pa_m\pa_k H_{im}-\Delta H_{ik}-\frac{1}{n-1}\pa_m \pa_p H_{mp}\delta_{ik}$$
and
\begin{align}
Z_{ijkl}=&\pa_i\pa_k H_{jl}+\pa_j\pa_l H_{ik}-\pa_i\pa_l H_{jk}-\pa_j\pa_k H_{il}\no\\
&+\frac{1}{n-2}(A_{jl}\delta_{ik}+A_{ik}\delta_{jl}-A_{jk}\delta_{il}-A_{il}\delta_{jk}).\label{def:Z}
\end{align}
Through direct computations (see also \cite[formula (11) in the proof of Lemma 2.1]{Ambrosetti-Malchiodi}), we obtain the following expansion of the Weyl tensor $W_{ijkl}$ of $g_{x_0}$
\begin{equation}\label{expan_Weyl_tensor}
W_{ijkl}=-2Z_{ijkl}+O(|\pa (h\partial h)|^2)+O(|y|^{k}).
\end{equation}
By our assumption, it follows that $|W_{g_{x_0}}|_{g_{x_0}}=O(|y|^{k})$ and $|\partial (h\partial h)|^2=O(|y|^{2k})$.  This implies $Z_{ijkl}= 0$ near $x_0$, because \eqref{def:Z} shows that $Z_{ijkl}$ are only homogeneous polynomials of degree $k-1$. Since we have already shown that $\partial_n H_{ij}=0$ on $\{y^n=0\}$, then one can deduce that $H=0$ from \cite[Proposition 2.3]{Brendle-Chen}. Hence the induction step is complete.

After finitely many steps, if $k=d$, we obtain the desired conclusion.
\end{proof}

\begin{proposition}\label{prop:z=nonemptyset}
Suppose that $m(\bar g_{x_0})>0$ for $x_0 \in \mathcal{Z}$, where $\bar g_{x_0}=G_{x_0}^{4/(n-2)}g_{x_0}$ is the metric on $\overline{M} \setminus\{x_0\}$ (see \cite[Proposition 5.14]{Chen-Sun}). Then $\ubar$ satisfies \eqref{ieq:goal_less} when $\e$ is sufficiently small.
\end{proposition}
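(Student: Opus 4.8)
The plan is to show that $\max_{t\in[0,\infty)}I[t\ubar]<S_c$ by combining the energy estimate \eqref{est:energy_psi=0} with the volume expansions in Lemma \ref{lem:exp_vol_M} and Lemma \ref{lem:exp_vol_bdry}, and then feeding the resulting quantities $\mathcal{E},\mathcal{A},\mathcal{B}$ into the explicit formula \eqref{max_I_at_tu} together with \eqref{eq:qudratic_root_t}. First I would collect the three pieces. From \eqref{est:energy_psi=0}, the energy $E[\ubar]$ is bounded above by the half-space bubble energy plus the boundary $W_\e$-contributions, plus the crucial negative mass term $-\e^{n-2}\mathcal{I}(x_0,\rho)$, the negative term involving $-\tfrac12\lambda^\ast\sum|\pa^\alpha h_{ab}|^2\e^{n-2}\int(\e+|y|)^{2|\alpha|+2-2n}dy$, and error terms of order $\rho^{|\alpha|+2-n}\e^{n-2}$, $\rho^{2d+4-n}|\log\rho|^2\e^{n-2}$, $\e^{n-1}\rho^{2-n}$. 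Since $x_0\in\mathcal{Z}$, Proposition \ref{prop:vanishing_order_metric} gives $(g_{x_0})_{ij}=\delta_{ij}+O(|y|^{d+1})$, so all the $|\pa^\alpha h_{ab}|$ with $|\alpha|\le d$ vanish; this kills the $\w$-terms entirely (by \eqref{est:w}, $\w\equiv 0$ here, or at least is a higher-order remainder), so $A_2=O(\e^{d+3})$-type and $B_1,B_2$ are likewise negligible, and the energy bound collapses to $E[\ubar]\le 4n(n-1)A+4(n-1)T_cB\cdot(\text{vol factor}) -\e^{n-2}\mathcal{I}(x_0,\rho)+(\text{small})$.

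Next I would identify $\mathcal{I}(x_0,\rho)$ with (a positive multiple of) the mass $m(\bar g_{x_0})$ in the limit $\rho\to 0$ after the $\e\to 0$ scaling, exactly as in \cite[Proposition 5.14]{Chen-Sun}: the leading term of $\int_{\pa^+B_\rho^+}(\U^2\pa_jh_{ij}+\tfrac{n}{n-2}\pa_j\U^2 h_{ij})\tfrac{y^i}{|y|}d\sigma - \tfrac{4(n-1)}{n-2}\e^{\frac{n-2}{2}}\int_{\pa^+B_\rho^+}(\U\pa_iG-G\pa_i\U)\tfrac{y^i}{|y|}d\sigma$ converges, after dividing by $\e^{n-2}$ and letting $\rho$ small, to a negative constant times $m(\bar g_{x_0})$, which is strictly positive by hypothesis. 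So for a suitable choice of $\rho=\rho(\e)$ (e.g. $\rho$ fixed small, then $\e$ small, or $\rho\to 0$ slowly), we get $E[\ubar]\le 8n(n-1)A+\tfrac{n}{n-2}\cdot(\ldots)+4(n-1)T_cB - c_0\, m(\bar g_{x_0})\e^{n-2}+o(\e^{n-2})$ with $c_0>0$. Simultaneously, Lemmas \ref{lem:exp_vol_M} and \ref{lem:exp_vol_bdry} give $\int_M\ubar^{\frac{2n}{n-2}}d\mu_{g_{x_0}}=A+o(\e^{n-2})$ and $\int_{\pa M}\ubar^{\frac{2(n-1)}{n-2}}d\sigma_{g_{x_0}}=B+o(\e^{n-2})$ (again because the $H_{ij}$ terms vanish).

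Then I would plug these into the maximum formula. Write $\mathcal{E}=\tfrac{n-2}{4(n-1)}E[\ubar]$, $\mathcal{A}=n(n-2)\int_M\ubar^{\frac{2n}{n-2}}d\mu_{g_{x_0}}$, $\mathcal{B}=c\int_{\pa M}\ubar^{\frac{2(n-1)}{n-2}}d\sigma_{g_{x_0}}$, and note from \eqref{max_I_at_tu} that $\max_t I[t\ubar]=\tfrac1{n-1}E[\ubar]t_\ast^2+4(n-2)\int_M\ubar^{\frac{2n}{n-2}}d\mu_{g_{x_0}}t_\ast^{\frac{2n}{n-2}}$ with $t_\ast^{2/(n-2)}$ given by \eqref{eq:qudratic_root_t}. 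At the ``reference'' values ($E=8n(n-1)A+\tfrac{4c}{n-2}B\cdot$appropriate, volumes exactly $A,B$) one checks, using \eqref{energy:bubble2} and \eqref{def:S_c}, that $t_\ast=1$ and $\max_t I[t\ubar]=S_c$ exactly. Therefore the dependence of $\max_t I[t\ubar]$ on $(\mathcal{E},\mathcal{A},\mathcal{B})$ is smooth near this reference point, and a first-order Taylor expansion shows $\max_t I[t\ubar]=S_c+\tfrac{\pa(\max)}{\pa\mathcal{E}}\big|_{\mathrm{ref}}\cdot(\mathcal{E}-\mathcal{E}_{\mathrm{ref}})+o(\e^{n-2})$, where $\tfrac{\pa(\max)}{\pa\mathcal{E}}>0$ (larger energy $\Rightarrow$ larger mountain-pass value — this sign is immediate from \eqref{max_I_at_tu} since $t_\ast^2>0$ and $\pa t_\ast/\pa\mathcal{E}>0$). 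Since $\mathcal{E}-\mathcal{E}_{\mathrm{ref}}\le -c_1\,m(\bar g_{x_0})\e^{n-2}+o(\e^{n-2})$ with $c_1>0$, we conclude $\max_t I[t\ubar]<S_c$ for $\e$ small.

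The main obstacle I anticipate is the bookkeeping at the matching radius $\rho$: one must choose $\rho$ (possibly $\rho\to 0$ as $\e\to 0$) so that the positive error terms $C\rho^{2d+4-n}|\log\rho|^2\e^{n-2}$ and $C\e^{n-1}\rho^{2-n}$ are genuinely $o(\e^{n-2})$ while the mass term $-\e^{n-2}\mathcal{I}(x_0,\rho)$ retains a definite negative sign of size $\approx -m(\bar g_{x_0})\e^{n-2}$; this is precisely the delicate point and requires that $\mathcal{I}(x_0,\rho)\to c_0 m(\bar g_{x_0})>0$ as $\rho\to0$, which is the content imported from \cite[Proposition 5.14]{Chen-Sun} and ultimately rests on the positive mass theorem \cite{almaraz-barbosa-lima} (valid for $3\le n\le 7$, or $n\ge8$ and $M$ spin) applied to $\bar g_{x_0}$, which by Proposition \ref{prop:vanishing_order_metric} is asymptotically flat of order $d+1-\varepsilon>(n-2)/2$ in the inverted coordinates. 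A secondary technical point is verifying that the higher-order remainders in the volume expansions (the $O(\e^3)$ and $O(\e^n\rho^{-n})$ terms) do not, after substitution into \eqref{eq:qudratic_root_t}, contribute at order $\e^{n-2}$; since $n\ge 3$ forces $\e^3$ and $\e^n\rho^{-n}$ to be controlled once $\rho$ is chosen as above, this is routine but must be stated carefully, and for $n=3$ the alternative argument alluded to in the introduction (where $\mathcal{Z}=\pa M$ automatically) handles the borderline exponents.
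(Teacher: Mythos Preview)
Your overall strategy is correct, but the paper's route is shorter and bypasses the Taylor expansion of $\max_t I[t\ubar]$ in $(\mathcal{E},\mathcal{A},\mathcal{B})$.

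First, a sharpening of your setup: since $x_0\in\mathcal{Z}$, Proposition~\ref{prop:vanishing_order_metric} gives $\pa^\alpha h_{ab}(0)=0$ for all $|\alpha|\le d$, so $H\equiv 0$; then \eqref{est:V} forces $V\equiv 0$, hence $\psi\equiv 0$ and $S_{nn}\equiv 0$ \emph{exactly}, not merely to higher order. Consequently the $O(\e^3)$ remainders in Lemmas~\ref{lem:exp_vol_M}--\ref{lem:exp_vol_bdry} (which arise from cubic $\psi$-terms) vanish, and one obtains the sharper expansions $\int_M\ubar^{\frac{2n}{n-2}}d\mu_{g_{x_0}}=A+O(\e^n\rho^{-n})$ and $\int_{\pa M}\ubar^{\frac{2(n-1)}{n-2}}d\sigma_{g_{x_0}}=B+O(\e^{n-1}\rho^{1-n})$, both $o(\e^{n-2})$ for fixed $\rho$. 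For the mass term the paper cites \cite[Proposition~4.3]{Brendle-Chen}: $m(\bar g_{x_0})>0$ implies $\mathcal{I}(x_0,\rho)>\tilde C>0$ for all sufficiently small $\rho$. One therefore fixes such a $\rho$ first (note $2d+4-n\ge 1$, so $C\rho^{2d+4-n}|\log\rho|^2\e^{n-2}$ is absorbed by taking $\rho$ small) and then sends $\e\to 0$; no $\e$-dependent $\rho$ is needed, which resolves your anticipated bookkeeping obstacle.

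Second, instead of your envelope computation, the paper argues as follows. With $\psi\equiv 0$, \eqref{est:energy_psi=0} collapses to $\mathcal{E}\le n(n-2)A+cB-\tfrac{\tilde C}{2}\e^{n-2}$, while $\mathcal{A}=n(n-2)A+o(\e^{n-2})$ and $\mathcal{B}=cB+o(\e^{n-2})$. Substituting into \eqref{eq:qudratic_root_t} and using \eqref{energy:bubble2} gives $t_\ast\in(0,1)$ for small $\e$. Since $E[\ubar]>0$ (because $Y(M,\pa M,[g_0])>0$) and $t_\ast<1$, \eqref{max_I_at_tu} yields directly
\[
I[t_\ast\ubar]
<\tfrac{1}{n-1}E[\ubar]+4(n-2)\!\int_M\ubar^{\frac{2n}{n-2}}d\mu_{g_{x_0}}
\le \tfrac{4}{n-2}\!\int_{\mathbb{R}^n_+}\!|\nabla W|^2dy+4(n-2)\!\int_{\mathbb{R}^n_+}\!W^{\frac{2n}{n-2}}dy=S_c.
\]
Your first-order variation argument reaches the same conclusion and is in fact the template the paper does use in Sections~\ref{Sect4}--\ref{Sect5}, where $\psi\not\equiv 0$ and the corrections $A_2,B_1,B_2$ must be tracked; here it is unnecessary. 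Finally, no separate treatment of $n=3$ is required: the argument works uniformly for all $n\ge 3$.
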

\begin{proof} We obtain that $(g_{x_0})_{ij}=\delta_{ij}+O(|y|^{d+1})$ near $x_0$, by the assumption and Proposition \ref{prop:vanishing_order_metric}. Then \eqref{est:V} implies that $V=\psi\equiv 0$ in $B_{2\rho}^+$, whence $\psi=S_{nn}=0$ in $B_{2\rho}^+$ by definitions of $\psi$ and $S_{nn}$. By \eqref{vol_conf_Fermi_coor} we have
\begin{align}\label{vol_M_psi=0}
\int_M \ubar^{\frac{2n}{n-2}}d\mu_{g_{x_0}}=&\int_{\Omega_{\rho}} (\U\circ\Psi_{x_0}^{-1})^{\frac{2n}{n-2}}d\mu_{g_{x_0}}+O(\e^n\rho^{-n})\no\\
=&\int_{B_\rho^+}\U^{\frac{2n}{n-2}}dy+O(\e^{n}\rho^{2d+2-n})+O(\e^n\rho^{-n})\no\\
=&A+O(\e^n\rho^{-n}).
\end{align}
Similarly we have
\begin{align}\label{vol_bdry_psi=0}
\int_{\partial M}\ubar^{\frac{2(n-1)}{n-2}}d\sigma_{g_{x_0}}=&\int_{\partial\Omega_\rho\cap\partial M}(\U\circ\Psi_{x_0}^{-1})^{\frac{2(n-1)}{n-2}}d\sigma_{g_{x_0}}+O(\e^{n-1}\rho^{1-n})\no\\
=&\int_{D_\rho }\U^{\frac{2(n-1)}{n-2}}d\sigma+O(\e^{n-1} \rho^{2d+3-n})+O(\e^{n-1}\rho^{1-n})\no\\
=&B+O(\e^{n-1}\rho^{1-n}).
\end{align}
Moreover, since $m(\bar g_{x_0})>0$, we obtain from \cite[Proposition 4.3]{Brendle-Chen} that $\mathcal{I}(x_0,\rho)>\tilde{C}>0$ for sufficiently small $\rho>0$. From this and Proposition \ref{prop:vanishing_order_metric}, \eqref{est:energy_psi=0} together with $\psi=S_{nn}=0$ implies
\begin{align}\label{E_psi=0}
\frac{n-2}{4(n-1)}E[\ubar]\leq& n(n-2)A+cB-\frac{\tilde{C}}{2}\e^{n-2}
=\int_{\mathbb{R}^n_+}|\nabla \U|^2dy-\frac{\tilde{C}}{2}\e^{n-2},
\end{align}
where the last identity follows from \eqref{energy:bubble2}. 

Denote by $t_\ast>0$ the unique maximum point of \eqref{eq:constraint_max_t} with $u=\ubar$. Thus, by \eqref{eq:qudratic_root_t} together with \eqref{vol_M_psi=0}, \eqref{vol_bdry_psi=0}, \eqref{E_psi=0} and \eqref{energy:bubble2}, we conclude that $t_\ast \in (0,1)$ when $\e$ is sufficiently small.
Therefore, for $0<\e\ll\rho< \rho_0$ with small enough $\rho_0$, we obtain 
\begin{align*}
I[t_\ast \ubar]=&\frac{1}{n-1}E[\ubar]t_\ast^2+4(n-2)\int_M(\ubar)^{\frac{2n}{n-2}}d\mu_{g_{x_0}}t_\ast^{\frac{2n}{n-2}}\\
<& \frac{4}{n-2}\int_{\mathbb{R}^n_+}|\nabla W(y)|^2dy+4(n-2)\int_{\mathbb{R}^n_+}W^{\frac{2n}{n-2}}dy=S_c,
\end{align*}
where the last inequality follows from \eqref{vol_M_psi=0}, \eqref{E_psi=0}, $Y(M,\pa M,[g_0])>0$ and $t_\ast \in (0,1)$, namely, this $\ubar$ satisfies \eqref{ieq:goal_less}. 
\end{proof}

It follows from \cite{han-li2} that if $n\geq 5$ and $\pa M$ admits a non-umbilic point, then the Han-Li conjecture is true. From this, a direct consequence of Proposition \ref{prop:z=nonemptyset} is the following
\begin{theorem}\label{thm:Han-Li_lower_dims}
The Han-Li conjecture is true, provided that any of the following assumptions is fulfilled:
\begin{enumerate}
\item [(i)] $n=3$;
\item [(ii)] $n=4,5$ and $\partial M$ has an umbilic point;
\item [(iii)] $n=6,7$, the boundary is umbilic and the Weyl tensor of $M$ vanishes at some boundary point;
\item [(iv)] $\mathcal{Z}$ is non-empty and $M$ is spin or $M$ is locally conformally flat with umbilic boundary.
\end{enumerate}
\end{theorem}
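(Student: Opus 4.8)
The plan is to reduce all five sub-cases to three already available ingredients: Proposition~\ref{prop:z=nonemptyset}, the positive mass theorem with non-compact boundary of Almaraz-Barbosa-de Lima \cite{almaraz-barbosa-lima} together with its rigidity statement, and Han-Li's resolution of the conjecture for manifolds that are locally conformally flat with umbilic boundary (hypothesis (b) of the Introduction). The only genuine analytic content, the construction of the test function, is packaged in Proposition~\ref{prop:z=nonemptyset}, so what remains is an organizational step built around a mass dichotomy.

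First I would check that $\mathcal{Z}\neq\emptyset$ in cases (i)--(iii) and in the first alternative of (iv). Recall $d=[(n-2)/2]$, so $d=0$ for $n=3$, $d=1$ for $n=4,5$ and $d=2$ for $n=6,7$. When $n=3$ the two defining inequalities of $\mathcal{Z}$ become $\limsup_{x\to x_0}d_{g_0}(x,x_0)^2|W_{g_0}|_{g_0}=0$ and $\limsup_{x\to x_0}d_{g_0}(x,x_0)|\mathring\pi_{g_0}|_{g_0}=0$, both trivial by continuity (indeed $W_{g_0}\equiv0$ in dimension three), so $\mathcal{Z}=\pa M$. When $n=4,5$ and $x_0\in\pa M$ is umbilic, the Weyl term carries the harmless weight $d_{g_0}(x,x_0)^{2-d}=d_{g_0}(x,x_0)$ while the umbilicity term carries weight $d_{g_0}(x,x_0)^{1-d}=1$, so the latter limsup equals $|\mathring\pi_{g_0}(x_0)|_{g_0}=0$; hence $x_0\in\mathcal{Z}$. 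When $n=6,7$, $\pa M$ is umbilic and $W_{g_0}(x_0)=0$, the Weyl weight is $d_{g_0}(x,x_0)^{0}$, giving $|W_{g_0}(x_0)|_{g_0}=0$, while $\mathring\pi_{g_0}\equiv0$ along $\pa M$ annihilates the second term; again $x_0\in\mathcal{Z}$. In case (iv) non-emptiness of $\mathcal{Z}$ is assumed outright (and if $M$ is locally conformally flat with umbilic boundary then $\mathcal{Z}=\pa M$ anyway).

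Next I would run the mass dichotomy. Under (i)--(iii) we have $3\le n\le7$, and under the first alternative of (iv) the manifold $M$ is spin; in both regimes the positive mass theorem \cite{almaraz-barbosa-lima} applies to $(\overline M\setminus\{x_0\},\bar g_{x_0})$, where $\bar g_{x_0}=G_{x_0}^{4/(n-2)}g_{x_0}$ and $x_0\in\mathcal{Z}$. By the choice of $G_{x_0}$ this manifold is scalar-flat with totally geodesic boundary, and since $x_0\in\mathcal{Z}$, Proposition~\ref{prop:vanishing_order_metric} and the resulting expansion of $G_{x_0}$ (see \cite[Proposition 5.14]{Chen-Sun}) show that it is asymptotically flat of order $p>(n-2)/2$, so $m(\bar g_{x_0})$ is well defined and nonnegative. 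If $m(\bar g_{x_0})>0$ for some $x_0\in\mathcal{Z}$, then Proposition~\ref{prop:z=nonemptyset} supplies a test function verifying \eqref{ieq:goal_less}; by the Mountain Pass Lemma together with Lemma~\ref{lem:compactness}, $I$ then has a non-trivial critical point, which, as explained in Section~\ref{Sect1}, is a positive solution of \eqref{eq:main_eq_on_M}, proving the conjecture for the prescribed $c$. If instead $m(\bar g_{x_0})=0$ for every $x_0\in\mathcal{Z}$, fix one such $x_0$: the rigidity part of the positive mass theorem forces $(\overline M\setminus\{x_0\},\bar g_{x_0})$ to be isometric to $(\mathbb{R}^n_+,g_{\mathbb{R}^n})$, whence $(M,g_0)$ is conformally equivalent to the standard hemisphere (equivalently the flat unit ball), i.e.\ locally conformally flat with umbilic boundary, and Han-Li's theorem \cite{han-li2} closes this case. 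Finally, under the second alternative of (iv), $M$ is already locally conformally flat with umbilic boundary, so Han-Li's theorem applies directly; this exhausts all the hypotheses.

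I expect no essentially new difficulty here: the substantive work is done by Proposition~\ref{prop:z=nonemptyset} and by the positive mass theorem, and the present statement merely assembles those inputs through the dichotomy above. The one point that demands care is the verification that, for $x_0\in\mathcal{Z}$ and $3\le n\le7$ (or $M$ spin), the metric $\bar g_{x_0}$ is asymptotically flat of the order required both for $m(\bar g_{x_0})$ to be defined and for the rigidity statement to be legitimately invoked — and this is precisely where the vanishing-order conditions defining $\mathcal{Z}$ are used, via Proposition~\ref{prop:vanishing_order_metric} and \cite[Proposition 5.14]{Chen-Sun}.
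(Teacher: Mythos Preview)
Your proposal is correct and follows the same approach as the paper: verify $\mathcal{Z}\neq\emptyset$ in each case, then invoke the positive mass theorem together with Proposition~\ref{prop:z=nonemptyset}. You are in fact more explicit than the paper about the mass dichotomy---the paper simply asserts that one obtains $m(\bar g_{x_0})>0$ and cites the relevant PMT (including \cite{escobar4} for the locally conformally flat alternative in (iv)), leaving the zero-mass rigidity case implicit, whereas you spell it out and close it via Han--Li's \cite{han-li2}.
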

\begin{proof} Based on Proposition \ref{prop:z=nonemptyset}, it reduces to showing that there exists $x_0 \in \mathcal{Z}$ such that $m(\bar g_{x_0})>0$. We will verify these term by term.

(i) If $n=3$, then $d=0$ and  $\mathcal{Z}=\partial M$.

(ii) If $n=4,5$ and let $x_0\in \pa M$ be an umbilic point, then $d=1$ and
$$\limsup_{x\to x_0}d_{g_0}(x,x_0)^{2-d}|W_{g_0}(x)|_{g_0}=\limsup_{x\to x_0}d_{g_0}(x,x_0)^{1-d}|\mathring{\pi}_{g_0}(x)|_{g_0}=0.$$
Thus, $x_0\in \mathcal{Z}$. 

(iii) If $n=6,7$, then $d=2$. Thanks to Han-Li's result \cite{han-li2}, we can assume the boundary is umbilic, i.e. $\mathring \pi_{g_0}=0$ on $\pa M$. Combining this and the assumption that the Weyl tensor of $M$ is zero at  some $x_0 \in \pa M$, we have $x_0\in \mathcal{Z}$.
 
 (iv) In this case, $\mathcal{Z}\neq \emptyset$. Also the positive mass type theorem has been verified in \cite[Theorem 1.3]{almaraz-barbosa-lima} and \cite{escobar4}, respectively.
\end{proof}

 Again from \cite{han-li2}  together with Theorem \ref{thm:Han-Li_lower_dims}, the remaining cases in lower dimension $3 \leq n \leq 7$ are
\begin{enumerate}
\item[$\bullet$] $n=4$ and $\partial M$ admits at least one non-umbilic point;
\item[$\bullet$] $n=6,7$, $\pa M$ is umbilic and the Weyl tensor $W_{g_0}$ of $M$ does not vanish everywhere on $\pa M$.
\end{enumerate}

\section{Remaining cases in dimensions four and six}\label{Sect4}

We first assume that $n=4$ and $\pa M$ admits at least a non-umbilic point $x_0$, then it follows from F. Marques \cite[Lemma 2.2]{marques1} that there exists $g_{x_0} \in [g_0]$  such that under $g_{x_0}$-Fermi coordinates around $x_0$, it has the following expansion near $x_0$:
\begin{align}\label{conf_Fermi_expansion}
(g_{x_0})_{ab}=\delta_{ab}-2\pi_{ab}y^n+O(|y|^2).
\end{align}
Then \eqref{conf_Fermi_expansion} implies 
\begin{align}\label{exp:h:n=4}
h_{ab}=-2\pi_{ab}y^n+O(|y|^2).
\end{align}

Define a test function as 
\begin{equation}\label{test_fcn_n=4}
\ubar(x)=[\chi_\rho(\U+\w)\circ\Psi^{-1}_{x_0}](x),
\end{equation}
where $\w$ is defined in \eqref{eq:def:psi}. Since $d=1$ when $n=4$, in $B_{2\rho}^+$ \eqref{est:w} gives 
\begin{align}\label{est:w:n=4}
|\w(y)|\leq C\sum_{a,b=1}^{n-1}|\pi_{ab}|(\e+|y|)\U(y).
\end{align}
Although the test function in this case only has the local feature, one can apply almost identical argument as Lemmas \ref{lem:exp_vol_M} and \ref{lem:exp_vol_bdry} to get the following two lemmas.
\begin{lemma}\label{lem:volume_in_n=4} If $0<\e\ll\rho<\rho_0$ for some sufficiently small $\rho_0$, there holds
\begin{align*}
\int_M \ubar^{\frac{2n}{n-2}}d\mu_{g_{x_0}}=&A+O(\e^{2}).
\end{align*}
\end{lemma}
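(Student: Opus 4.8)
The plan is to follow the strategy already set up in Lemmas~\ref{lem:exp_vol_M} and~\ref{lem:exp_vol_bdry}, specializing it to $n=4$, where $d=1$ and the correction term $\w$ is given by \eqref{eq:def:psi} with the bound \eqref{est:w:n=4}. First I would use the expansion \eqref{vol_conf_Fermi_coor} of $d\mu_{g_{x_0}}$ to restrict the integral to the coordinate half-ball: since $\ubar$ decays like $\U$, the contribution outside $\Omega_\rho$ is $O(\e^n\rho^{-n})=O(\e^4\rho^{-4})$, and the error coming from $d\mu_{g_{x_0}}=(1+O(|y|^{2d+2}))\,dy=(1+O(|y|^4))\,dy$ inside $B_\rho^+$ is $O(\e^n\rho^{N-n})$, which for small $\rho$ is absorbed. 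So the task reduces to expanding $\int_{B_\rho^+}(\U+\w)^{\frac{2n}{n-2}}\,dy$ with $\frac{2n}{n-2}=4$ when $n=4$.

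Next I would Taylor-expand the integrand as $\U^4 + 4\U^3\w + 6\U^2\w^2 + 4\U\w^3 + \w^4$. The leading term gives $\int_{B_\rho^+}\U^{4}\,dy = A + O(\e^4\rho^{-4})$, exactly as in \eqref{volume_I_1}. The crucial observation, already used in the proof of Lemma~\ref{lem:exp_vol_M}, is that $\frac{2n}{n-2}\U^{\frac{n+2}{n-2}}\w = \mathrm{div}(\U^{\frac{2n}{n-2}}V)$, so the linear term $4\int_{B_\rho^+}\U^3\w\,dy = \int_{B_\rho^+}\mathrm{div}(\U^4 V)\,dy$ integrates by parts to a boundary term; since $V_n=0$ on $D_\rho$ by \eqref{eq:V}, only the spherical part $\partial^+B_\rho^+$ survives, and \eqref{est:V} (with $|\alpha|=1$, $|\beta|=0$) bounds it by $O(\e^n\rho^{1-n})=O(\e^4\rho^{-3})$. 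For the remaining terms, using \eqref{est:w:n=4} we have $|\w(y)|\le C(\e+|y|)\U(y)$, so $\U^2\w^2 \le C(\e+|y|)^2\U^4$, $\U\w^3\le C(\e+|y|)^3\U^4$, and $\w^4\le C(\e+|y|)^4\U^4$; integrating $\U^4(\e+|y|)^k\le C\e^{2k}(\e+|y|)^{k}(\e+|y|)^{-8}\cdot\e^{8-2k}$ — more directly, $\int_{B_\rho^+}(\e+|y|)^k\U^4\,dy = O(\e^{\min(k,4)})$ for the relevant range, giving in particular $\int\U^2\w^2 = O(\e^2)$, which dominates the cubic and quartic pieces $O(\e^3)$. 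Collecting everything yields $\int_M\ubar^{\frac{2n}{n-2}}\,d\mu_{g_{x_0}} = A + O(\e^2)$, which is exactly the claimed estimate (one need not even isolate $A_2$ here, since $A_2=O(\e^2)$ is absorbed into the error).

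I do not expect a genuine obstacle: the only subtlety is bookkeeping the powers of $\e$ and $\rho$ so that the $O(\e^2)$ term truly dominates everything else under the hypothesis $0<\e\ll\rho<\rho_0$, and making sure the off-$B_\rho^+$ and the $d\mu_{g_{x_0}}$-error terms are of higher order, which they are since $n=4$ makes $\e^4\rho^{-4}$, $\e^4\rho^{-3}$, and $\e^4\rho^{N-4}$ all $o(\e^2)$ for $\rho$ fixed small (indeed all $O(\e^3)$ after choosing $\rho$ appropriately, or simply $o(1)\cdot\e^2$). The proof is essentially a transcription of the argument for Lemma~\ref{lem:exp_vol_M} with $n=4$ substituted and the cruder bound $|\w|\le C(\e+|y|)\U$ in place of the general one, so I would simply write ``the same argument as in the proof of Lemma~\ref{lem:exp_vol_M}, using \eqref{est:w:n=4}, gives the claim'' and, if desired, spell out the one-line estimate $\int_{B_\rho^+}\U^2\w^2\,dy = O(\e^2)$ together with the integration-by-parts identity for the linear term.
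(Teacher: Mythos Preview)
Your proposal is correct and follows exactly the route the paper takes: the paper itself proves Lemma~\ref{lem:volume_in_n=4} simply by saying that the argument of Lemma~\ref{lem:exp_vol_M} carries over (with the test function now purely local), and your write-up is a faithful unpacking of that argument for $n=4$, using the divergence identity $\frac{2n}{n-2}\U^{\frac{n+2}{n-2}}\w=\mathrm{div}(\U^{\frac{2n}{n-2}}V)$ for the linear term and the pointwise bound \eqref{est:w:n=4} for the quadratic and higher terms. Your closing suggestion---to state it as ``the same argument as in Lemma~\ref{lem:exp_vol_M}, using \eqref{est:w:n=4}''---is precisely what the paper does.
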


\begin{lemma}\label{lem:volume_bd_n=4} 
If $0<\e\ll\rho<\rho_0$ for some sufficiently small $\rho_0$, there holds
\begin{align*}
\int_{\pa M}\ubar^{\frac{2(n-1)}{n-2}}d\sigma_{g_{x_0}}=&B+B_1+O(\e^2),
\end{align*}
where
\begin{align*}
B_1=&\frac{2(n-1)}{n-2}\int_{D_\rho}\U^{\frac{n}{n-2}}\psi d\sigma
\end{align*}
and then $|B_1|\leq C(n,T_c)\e$.
\end{lemma}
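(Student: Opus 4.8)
The plan is to follow the proof of Lemma~\ref{lem:exp_vol_bdry} almost verbatim, exploiting two simplifications special to $n=4$: the boundary volume-form correction is of high order ($N=2d+2=4$), and the boundary Sobolev exponent $\frac{2(n-1)}{n-2}=3$ is an integer. First I would use \eqref{vol_conf_Fermi_coor} to replace $d\sigma_{g_{x_0}}$ by the flat measure $d\sigma$ on the coordinate half-disk, at the cost of an error $O(\e^{n-1}\rho^{N-n+1})$. Since $\ubar$ is supported in $\Omega_{2\rho}$ and $\chi_\rho\equiv 1$ on $B_\rho^+$, the only boundary region contributing to leading order is $D_\rho$, while on the transition annulus $D_{2\rho}\setminus D_\rho$ one has the crude pointwise bound $\chi_\rho(\U+\psi)\le C\e^{(n-2)/2}\rho^{2-n}$ (from \eqref{est:bubble_fcn} and \eqref{est:w:n=4}), so that piece contributes only $O(\e^{n-1}\rho^{1-n})$. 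Under the standing convention $0<\e\ll\rho<\rho_0$ both of these error terms are $o(\e^2)$, and we are reduced to expanding $\int_{D_\rho}(\U+\psi)^{\frac{2(n-1)}{n-2}}\,d\sigma$.

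Because $\frac{2(n-1)}{n-2}=3$ when $n=4$, the binomial expansion terminates exactly, with no Taylor remainder to estimate:
\[
(\U+\psi)^3=\U^3+3\,\U^2\psi+3\,\U\psi^2+\psi^3 .
\]
Since $\frac{n}{n-2}=2$ as well, the second term is precisely $\frac{2(n-1)}{n-2}\,\U^{\frac{n}{n-2}}\psi$, whose integral over $D_\rho$ is exactly the claimed $B_1$. For the leading term, $\int_{D_\rho}\U^{\frac{2(n-1)}{n-2}}\,d\sigma=B-\int_{\mathbb{R}^{n-1}\setminus D_\rho}\U^{\frac{2(n-1)}{n-2}}\,d\sigma=B+O(\e^{n-1}\rho^{1-n})$ by \eqref{est:bubble_fcn}. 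It remains only to dispose of the two remaining terms and to bound $B_1$.

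The sole tool needed here is the pointwise decay $\U(y)\le C\e(\e+|y|)^{-2}$ and $|\psi(y)|\le C\sum_{a,b}|\pi_{ab}|(\e+|y|)\U(y)\le C\e(\e+|y|)^{-1}$, the latter being \eqref{est:w:n=4} with $d=1$. Substituting these into the integrals over $D_\rho\subset\mathbb{R}^{n-1}=\mathbb{R}^3$ and rescaling $y=\e z$ gives $\int_{D_\rho}\U\psi^2\,d\sigma\le C\e^2\int_{\mathbb{R}^{n-1}}(1+|z|)^{-4}\,dz=O(\e^2)$, $\int_{D_\rho}\psi^3\,d\sigma\le C\e^3\int_{|z|<\rho/\e}(1+|z|)^{-3}\,dz=O(\e^3\log(\rho/\e))=o(\e^2)$, and $|B_1|=\big|\tfrac{2(n-1)}{n-2}\int_{D_\rho}\U^{\frac{n}{n-2}}\psi\,d\sigma\big|\le C\int_{D_\rho}\e^3(\e+|y|)^{-5}\,d\sigma\le C(n,T_c)\,\e$. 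Summing the four contributions together with the errors from the first paragraph yields $\int_{\pa M}\ubar^{\frac{2(n-1)}{n-2}}\,d\sigma_{g_{x_0}}=B+B_1+O(\e^2)$, as asserted.

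There is no genuine obstacle in this lemma: it is a direct transcription of Lemma~\ref{lem:exp_vol_bdry}, made easier by the fact that the cubic exponent makes the expansion exact so that only three explicit integrals of negative powers of $(\e+|y|)$ need to be estimated. The only mild care required is the bookkeeping of the $\rho$-dependent errors $O(\e^{n-1}\rho^{1-n})$ and $O(\e^{n-1}\rho^{N-n+1})$, checking that once $\rho$ is fixed sufficiently small all of them are $o(\e^2)$, so that the statement holds with a clean $O(\e^2)$ remainder.
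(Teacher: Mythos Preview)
Your proof is correct and follows essentially the same route as the paper, which simply refers back to Lemma~\ref{lem:exp_vol_bdry} with the remark that the argument carries over to the local test function \eqref{test_fcn_n=4}. Your additional observation that the exponent $\tfrac{2(n-1)}{n-2}=3$ makes the binomial expansion exact is a pleasant simplification specific to $n=4$, but it is not needed: the paper's general Taylor expansion with remainder $O(\e^3)$ already suffices, and the $B_2$ term there becomes exactly your $3\!\int_{D_\rho}\U\psi^2\,d\sigma=O(\e^2)$.
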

A direct calculation shows
\begin{align}\label{eq:gradient_U}
|\nabla \U|^2=\e^{n-2}(n-2)^2\frac{|\bar y|^2+|y^n-T_c\e|^2}{(\e^2+|y-T_c\e\textbf{e}_n|^2)^n}.
\end{align}
Notice that 
\begin{align*}
|\nabla \ubar |^2_{g_{x_0}}\leq C|\nabla \ubar|^2\leq C\left[|\nabla\chi_\rho|^2(\U+\w)^2+\chi^2_\rho|\nabla(\U+\w)|^2\right],
\end{align*}
it yields
\begin{align*}
\int_{B_{2\rho}^+\backslash B_{\rho}^+}|\nabla\chi_\rho|^2(\U+\w)^2dy\leq& C\rho^{-2}\int_{B_{2\rho}^+\backslash B_{\rho}^+}\U^2dy\leq C\e^{n-2}\rho^{2-n},\\
\int_{B_{2\rho}^+\backslash B_{\rho}^+}\chi_\rho^2|\nabla(\U+\w)|^2dy\leq& C \int_{B_{2\rho}^+\backslash B_{\rho}^+}|\nabla \U|^2 dy \leq C\e^{n-2}\rho^{2-n}. 
\end{align*}
Similarly, we have
\begin{align*}
\int_{\Omega_{2\rho}\backslash \Omega_{\rho}}R_{g_{x_0}}\ubar^2d\mu_{g_{x_0}}\leq& C\int_{B_{2\rho}^+\backslash B_{\rho}^+}(\U+\w)^2dy\leq C\e^{n-2}\rho^{4-n},\\
\int_{\Psi_{x_0}(D_{2\rho}\backslash D_{\rho})}h_{g_{x_0}}\ubar^2d\sigma_{g_{x_0}}\leq& C\int_{D_{2\rho}\backslash D_{\rho}}|\bar y|^{N-1}(\U+\w)^2d\sigma\\
\leq& C\e^{n-2}\rho^{2+N-n}.
\end{align*}
Hence we obtain
\begin{align}\label{est:energy_outside_n=4}
E[\ubar;M\backslash \Omega_\rho]\leq C\e^{n-2}\rho^{2-n}.
\end{align}

Next we turn to estimate $E[\ubar; \Omega_\rho]$. By \eqref{exp:h:n=4} and \eqref{est:V}, estimate \eqref{est:B_rho} with $n=4$ actually implies
\begin{align}\label{est:energy_inside_n=4}
\frac{n-2}{4(n-1)}E[\ubar;\Omega_\rho]\leq& n(n-2)A+c\left(B+2\int_{D_\rho}\U^{\frac{n}{n-2}}\w d\sigma\right)+O(\e^2\rho^{2-n})\no\\
&-\frac{1}{2}|\pi_{g_{x_0}}|_{g_{x_0}}^2\e^2\int_{B_\rho^+}(\e+|y|)^{4-2n}dy,
\end{align}
where $|\pi_{g_{x_0}}|_{g_{x_0}}^2=\sum_{a,b=1}^{n-1}\pi^2_{ab}>0$ at $x_0$ by \eqref{exp:h:n=4} and the fact that $h_{g_{x_0}}=0 $ at this non-umbilic point $x_0$. Notice that 
$$\int_{B_\rho^+}(\e+|y|)^{4-2n}dy=O(\log \frac{\rho}{\e}).$$
Therefore, combining \eqref{est:energy_outside_n=4} and \eqref{est:energy_inside_n=4}, we can estimate 
\begin{align}\label{est:energy_n=4}
\frac{n-2}{4(n-1)}E[\ubar]\leq&n(n-2)A+c\left(B+2\int_{D_\rho}\U^{\frac{n}{n-2}}\w d\sigma\right)\no\\
&-\frac{1}{2}\lambda^\ast |\pi_{g_{x_0}}|_{g_{x_0}}^2\e^2\int_{B_\rho^+}(\e+|y|)^{-4}dy+O(\e^2\rho^{-2}).
\end{align}
Next we intend to apply Lemmas \ref{lem:volume_in_n=4} and \ref{lem:volume_bd_n=4} and equation \eqref{est:energy_n=4} to show that $\ubar$ satisfies \eqref{ieq:goal_less}. 

Applying \eqref{est:energy_n=4}, Lemmas \ref{lem:volume_in_n=4} and \ref{lem:volume_bd_n=4} to give
\begin{align*}
\mathcal{A}=&n(n-2)A+O(\e^2),\\
\mathcal{B}=&cB+cB_1+O(\e^2),\\
\mathcal{E}\leq&n(n-2)A+cB+\tfrac{n-2}{n-1}cB_1-\frac{1}{2}\lambda^\ast |\pi_{g_{x_0}}|_{g_{x_0}}^2\e^2\int_{B_\rho^+}(\e+|y|)^{-4}dy+O(\e^2\rho^{-2})
\end{align*}
and $B_1=O(\e)$, we obtain
\begin{align*}
\mathcal{B}^2+4\mathcal{E}\mathcal{A}
\leq&(cB+2n(n-2)A)^2+\left[2cB+\frac{4n(n-2)^2}{n-1}A\right]cB_1\\
&-2n(n-2)A\lambda^\ast\e^2|\pi_{g_{x_0}}|_{g_{x_0}}^2\int_{B_\rho^+}(\e+|y|)^{-4}dy+O(\e^2\rho^{-2}).
\end{align*}
Notice that $2n(n-2)A+cB>0$ for all $c \in \mathbb{R}$ because of \eqref{energy:bubble2}. Thus, \eqref{eq:qudratic_root_t} implies
\begin{align}\label{est:t_n=4}
t^{\frac{2}{n-2}}_*\leq&1-\frac{c}{(n-1)(2n(n-2)A+cB)}B_1\no\\
&\quad-\frac{\lambda^\ast |\pi_{g_{x_0}}|_{g_{x_0}}^2}{2(2n(n-2)A+cB)}\e^2\int_{B_\rho^+}(\e+|y|)^{-4}dy+O(\e^2\rho^{-2})\no\\
:=&1+\frac{2}{n-2}\tilde{B}_1-C^*\e^2\log(\rho/\e)+O(\e^2\rho^{-2}),
\end{align}
where $C^\ast$ is a positive constant depending on $c,\lambda^\ast,|\pi_{g_{x_0}}|_{g_{x_0}}^2$ and
$$\tilde{B}_1=\frac{n-2}{2(n-1)}\frac{-c}{2n(n-2)A+cB}B_1=O(\e).$$
Plugging \eqref{est:t_n=4}, \eqref{est:energy_n=4} and \eqref{eq:S_c_n=4} into \eqref{max_I_at_tu} and using \eqref{eq:S_c_n=4} and the assumption that $Y(M,\pa M,[g_0])>0$, we conclude that
\begin{align*}
&\max_{t \in[0,\infty)}I[t\ubar]\\
=&\frac{1}{n-1}E[t_*\ubar]+4(n-2)\int_M(t_*\ubar)^{\frac{2n}{n-2}}d\mu_{g_{x_0}}\\
\leq&S_c+\frac{4c}{n-1}B_1+8\tilde{B}_1(nA+\frac{1}{n-2}cB)+8nA\tilde{B}_1-\tilde{C}\e^2\log(\rho/\e)+O(\e^2\rho^{-2})\\
=&S_c-\tilde{C}\e^2\log(\rho/\e)+O(\e^2\rho^{-2})\\
<& S_c,
\end{align*}
for some $\tilde C=\tilde C(n,T_c, |\pi_{g_{x_0}}|_{g_{x_0}}^2)>0$, when $0<\e\ll\rho<\rho_0$ for some sufficiently small $\rho_0$, where the second identity follows from definitions of $B_1$ and $\tilde B_1$.

In the following, we go to dimensions $n=6,7$. Recall that when $n=6,7$, the remaining case is that the Weyl tensor of $M$ is non-zero everywhere on the umbilic boundary $\pa M$. Indeed we can achieve our goal when relaxing the assumption a little: 
Assume that $n\geq 6$, $\pa M$ is umbilic and the Weyl tensor $W_{g_0}$ of $M$ is non-zero at some $x_0 \in \pa M$, then  at $x_0$ there hold
\begin{equation}\label{est:h_higher_dim}
\pa^\alpha h_{ab}=0 \mathrm{~~for~~all~~} 0\leq |\alpha| \leq 1 \mathrm{~~and~~}\sum_{a,b=1}^{n-1}\sum_{|\alpha|=2}|\pa^\alpha h_{ab}|^2>0.
\end{equation} 
Here the above second assertion in \eqref{est:h_higher_dim} follows from a contradiction argument by using \eqref{expan_Weyl_tensor} and the assumption that $W_{g_0}(x_0)\not =0$.
Thus, it follows from \eqref{est:w} and  \eqref{est:h_higher_dim} that  in $B_{2\rho}^+$ there holds 
\begin{equation}\label{est:psi_higher_dim}
|\psi(y)|\leq C(\e+|y|)^2\U(y).
\end{equation}

In dimension six, a local test function is also enough to our use and the argument can be similarly done as the one in dimension four. Suppose that $n=6$, we still adopt the same test function $\ubar$ in \eqref{test_fcn_n=4} except for replacing $n=4$ by $n=6$. Similarly, with the above refined estimate \eqref{est:psi_higher_dim}, we obtain 
\begin{align}
\int_M \ubar^{\frac{2n}{n-2}}d\mu_{g_{x_0}}=&A+O(\e^{4})+O(\e^6\rho^{-6}),\label{est:volume_in_n=6}\\
\int_{\pa M}\ubar^{\frac{2(n-1)}{n-2}}d\sigma_{g_{x_0}}=&B+B_1+O(\e^4)+O(\e^6\rho^{-6}),\label{est:volume_bd_n=6}
\end{align}
where
\begin{align*}
B_1=&\frac{2(n-1)}{n-2}\int_{D_\rho}\U^{\frac{n}{n-2}}\psi d\sigma
\end{align*}
and then $|B_1|\leq C(n,T_c)\e^2$. Similar to \eqref{est:energy_n=4}, by using \eqref{est:energy_outside_n=4} together with $n=6$, \eqref{est:volume_in_n=6}, \eqref{est:volume_bd_n=6} and  \eqref{est:B_rho},  we estimate
\begin{align}\label{est:energy_n=6}
&\frac{n-2}{4(n-1)}E[\ubar]\no\\
\leq&n(n-2)A+c\left(B+2\int_{D_\rho}\U^{\frac{n}{n-2}}\w d\sigma\right)\no\\
&-\frac{1}{2}\lambda^\ast\sum_{|\alpha|=2}\sum_{a,b=1}^{n-1}|\partial^\alpha h_{ab}|^2\e^4\int_{B_\rho^+}(\e+|y|)^{-6}dy+O(\e^4\rho^{-4}).
\end{align}
Notice that
$$\int_{B_\rho^+}(\e+|y|)^{-6}dy=O(\log\frac{\rho}{\e}).$$
With these estimates and \eqref{eq:constraint_max_t} for this $\ubar$, one can proceed as the case $n=4$ and find that all $\e^2$-terms involved in $I[t_\ast \ubar]$ are cancelled out, eventually prove that $\ubar$ satisfies \eqref{ieq:goal_less}.

\section{$n \geq 7$, umbilic boundary and non-zero Weyl tensor at a boundary point }\label{Sect5}

In this section, we assume that $n\geq 7$, $\pa M$ is umbilic and the Weyl tensor of $M$ is non-zero at some $x_0 \in \pa M$. In Subsection \ref{Subsect5.1}, we still adopt the test function $\ubar$ defined in \eqref{test_fcn_Chen_Sun} to prove the Han-Li conjecture in addition that the constant $c$ is not less than a negative dimension constant. In Subsection \ref{Subsect5.2}, we explicitly construct a local test function to prove the Han-Li conjecture for all non-positive constant $c$.

\subsection{Positive constant boundary mean curvature}\label{Subsect5.1}

In this subsection, we still adopt the test function as defined in \eqref{test_fcn_Chen_Sun}. In contrast with those cases in dimensions four and six, due to the loss of the $\log|\e|$-term from the first correction term in the following estimate of $E$, as well as of $I$, the situation becomes more complicated. 
We start with some elementary calculations and temporarily admit the following expansions:
\begin{align}
\mathcal{A}=&n(n-2)A+n(n-2)\tilde{A}+O(\e^6),\label{eq:A}\\
\mathcal{B}=&cB+c\tilde{B}+O(\e^6),\label{eq:B}\\
\mathcal{E}=&n(n-2)A+cB+\tilde{E}+O(\e^5|\log\e|),\label{eq:E}
\end{align}
where $|\tilde A|\leq C\e^4$ and $|\tilde B| \leq C \e^2$ and $\tilde E\leq C\e^2$.

A direct computation together with \eqref{energy:bubble2} yields 
\begin{align*}
\mathcal{B}^2+4\mathcal{E}\mathcal{A}=&(2n(n-2)A+cB)^2+2c^2B\tilde B+4n(n-2)A\tilde E\\
&+c^2\tilde B^2+4n(n-2)\tilde A(n(n-2)A+cB)+o(\e^4)
\end{align*}
and 
\begin{align*}
&-\mathcal{B}+\sqrt{\mathcal{B}^2+4\mathcal{E}\mathcal{A}}\\
=&2n(n-2)A+2n(n-2)\frac{A\tilde E-cA\tilde B+\frac{c^2\tilde{B}^2}{4n(n-2)}+[n(n-2)A+cB]\tilde A}{2n(n-2)A+cB}\\
&-\frac{1}{2}\frac{(c^2B\tilde B+2n(n-2)A\tilde E)^2}{(2n(n-2)A+cB)^3}+o(\e^4).
\end{align*}
It follows from the above estimates and $\eqref{eq:qudratic_root_t}$ that
\begin{align*}
t_*^{\frac{2}{n-2}}=&1+\frac{\tilde E-c\tilde B}{2n(n-2)A+cB}+\frac{\frac{c^2\tilde B^2}{4n(n-2)A}-n(n-2)\tilde A}{2n(n-2)A+cB}\\
&-\frac{(c^2B\tilde B+2n(n-2)A\tilde E)^2}{4n(n-2)A(2n(n-2)A+cB)^3}+o(\e^4)\\
:=&1+\frac{2}{n-2}T_*+o(\e^4),
\end{align*}
whence $T_*\leq C\e^2$. Thus, we obtain
\begin{equation}\label{t_max_higher_dim}
t_*=1+T_*+\frac{n-4}{2(n-2)}T^2_*+o(\e^4).
\end{equation}
Therefore, by \eqref{eq:A}, \eqref{eq:E}, \eqref{t_max_higher_dim}  and \eqref{max_I_at_tu}, we conclude that
\begin{align}
&\max_{0\leq t<\infty}I[t\ubar]\no\\
=&\frac{1}{n-1}E[t_*\ubar]+4(n-2)\int_M(t_*\ubar)^{\frac{2n}{n-2}}d\mu_{g_{x_0}}\no\\
=&\frac{4}{n-2}\Big[1+2T_*+\frac{2n-6}{n-2}T_*^2+o(\e^4)\Big](n(n-2)A+cB+\tilde E+o(\e^4))\no\\
&+4(n-2)\Big[1+\frac{2n}{n-2}T_*+\frac{2n(n-1)}{(n-2)^2}T_*^2+o(\e^4)\Big](A+\tilde A+O(\e^5))\no\\
=&S_c+\frac{4}{n-2}\tilde E+\frac{8}{n-2}(2n(n-2)A+cB)T_\ast+4(n-2)\tilde A+\frac{8}{n-2} \tilde ET_\ast\no\\
&+\left[\frac{8(n-3)}{(n-2)^2}(n(n-2)A+cB)+\frac{8n(n-1)}{n-2}A\right]T_*^2+o(\e^4)\no\\
=&S_c+\frac{4(n-1)}{n-2}\tilde E-4c\tilde B-4(n-1)(n-2)\tilde A\no\\
&+\frac{c^2\tilde B^2}{n(n-2)A}-\frac{(c^2B\tilde B+2n(n-2)A\tilde E)^2}{n(n-2)A(2n(n-2)A+cB)^2}+\frac{4\tilde E(\tilde E-c\tilde B)}{2n(n-2)A+cB}\no\\
&+\left[\frac{8(n-3)}{(n-2)^2}(n(n-2)A+cB)+\frac{8n(n-1)}{n-2}A\right]T_*^2+o(\e^4)\no\\
=&S_c+\frac{4(n-1)}{n-2}\tilde E-4c\tilde B-4(n-1)(n-2)\tilde A\no\\
&+\frac{4(n(n-2)A+cB)}{(2n(n-2)A+cB)^2}(\tilde E-c\tilde B)^2+o(\e^4)\no\\
&+\left[2(n-3)(n(n-2)A+cB)+2n(n-1)(n-2)A\right]\frac{(\tilde E-c\tilde B)^2}{(2n(n-2)A+cB)^2}\no\\
=&S_c+\frac{4(n-1)}{n-2}\tilde E-4c\tilde B-4(n-1)(n-2)\tilde A+\frac{2(n-1)(\tilde E-c\tilde B)^2}{2n(n-2)A+cB}+o(\e^4).\label{eq:level_n=6,7}
\end{align}

We are now in a position to verify \eqref{eq:A}-\eqref{eq:E}.

\begin{lemma}\label{lem:volume_in_n=6,7} Assume that $n\geq 7$ and $\pa M$ is umbilic, then if $0<\e\ll\rho<\rho_0$ for some sufficiently small $\rho_0$, the volume of $M$ has the expansion
\begin{align}\label{eq:volume_in_n=6,7}
\int_M \ubar^{\frac{2n}{n-2}}d\mu_{g_{x_0}}=&A+A_2+O(\rho^{-6}\e^6),
\end{align}
where
$$A_2=\frac{n(n+2)}{(n-2)^2}\int_{B_\rho^+}\U^{\frac{4}{n-2}}\w^2dy$$
with $|A_2|\leq C\e^4 $.
\end{lemma}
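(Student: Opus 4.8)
The plan is to run the argument of the proof of Lemma \ref{lem:exp_vol_M}, but to exploit the sharper pointwise bound $|\psi(y)|\leq C(\e+|y|)^2\U(y)$ in $B_{2\rho}^+$ furnished by \eqref{est:psi_higher_dim}, which is available here precisely because $\pa M$ is umbilic, so that $\pa^\alpha h_{ab}=0$ at $x_0$ for $0\leq|\alpha|\leq1$ by \eqref{est:h_higher_dim}. First I would split $\int_M\ubar^{\frac{2n}{n-2}}d\mu_{g_{x_0}}$ into the contribution of the coordinate half-ball $\Omega_\rho$ and of its complement. On $M\backslash\Omega_\rho$ we have $\ubar=\e^{\frac{n-2}{2}}(1-\chi_\rho)\circ\Psi_{x_0}^{-1}G$, and since $G$ is smooth and positive on $\overline M\setminus\{x_0\}$ with $G(\Psi_{x_0}(y))\sim|y|^{2-n}$ near $x_0$, an elementary estimate gives $\int_{M\backslash\Omega_\rho}\ubar^{\frac{2n}{n-2}}d\mu_{g_{x_0}}=O(\e^n\rho^{-n})$, which is absorbed into $O(\rho^{-6}\e^6)$ since $\e\ll\rho$ and $n\geq7$.

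On $\Omega_\rho$, where $\chi_\rho\equiv1$ on $B_\rho^+$, I would pull back by $\Psi_{x_0}$ and use $d\mu_{g_{x_0}}=(1+O(|y|^N))dy$ with $N=2d+2$ to reduce to expanding $\int_{B_\rho^+}(\U+\psi)^{\frac{2n}{n-2}}dy$, the volume-form error being controlled via \eqref{est:bubble_fcn} and \eqref{est:psi_higher_dim}. Taylor-expanding about $\U$ gives
$$(\U+\psi)^{\frac{2n}{n-2}}=\U^{\frac{2n}{n-2}}+\frac{2n}{n-2}\U^{\frac{n+2}{n-2}}\psi+\frac{n(n+2)}{(n-2)^2}\U^{\frac{4}{n-2}}\psi^2+R_3,$$
where, using $|\psi/\U|\leq C\rho^2\leq\tfrac{1}{2}$, the remainder obeys $|R_3|\leq C\U^{\frac{2n}{n-2}}|\psi/\U|^3\leq C(\e+|y|)^6\U^{\frac{2n}{n-2}}$ by \eqref{est:psi_higher_dim}, so that $\int_{B_\rho^+}|R_3|\,dy\leq C\e^n\int_{B_\rho^+}(\e+|y|)^{6-2n}dy=O(\e^6)$. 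The constant term yields $\int_{B_\rho^+}\U^{\frac{2n}{n-2}}dy=A+O(\e^n\rho^{-n})$ as in Lemma \ref{lem:exp_vol_M}, the quadratic term is $A_2$ by definition, and the same pointwise bounds give $|A_2|\leq C\e^n\int_{B_\rho^+}(\e+|y|)^{4-2n}dy=O(\e^4)$.

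The only term needing the structure of $\psi$ rather than just its size is the linear one: by \eqref{eq:def:psi} one has $\frac{2n}{n-2}\U^{\frac{n+2}{n-2}}\psi=\mathrm{div}(\U^{\frac{2n}{n-2}}V)$, so integration by parts over $B_\rho^+$ leaves only a boundary integral; the part over $D_\rho$ vanishes because $V_n=0$ on $\mathbb{R}^{n-1}$ by \eqref{eq:V}, while on $\partial^+B_\rho^+$ the refined bound $|V(y)|\leq C(\e+|y|)^3$ from \eqref{est:V} (once more using umbilicity to kill the $|\alpha|=1$ contributions) together with $\U^{\frac{2n}{n-2}}\leq C\e^n\rho^{-2n}$ yields a term of size $O(\e^n\rho^{2-n})$, again absorbed into $O(\rho^{-6}\e^6)$. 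Adding the four pieces gives the asserted expansion. I expect the one genuinely delicate point to be the bookkeeping of the $(\e,\rho)$-dependent error terms — in particular verifying that the cubic Taylor remainder is of strictly lower order than $A_2$, which is exactly where the extra decay $(\e+|y|)^2$ in \eqref{est:psi_higher_dim} (as opposed to the generic $(\e+|y|)$ exploited in dimensions four and six) is indispensable; beyond this I foresee no serious obstacle.
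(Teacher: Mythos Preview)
Your proposal is correct and follows exactly the approach the paper indicates (``same lines as Lemma~\ref{lem:exp_vol_M} but with the estimate~\eqref{est:psi_higher_dim}''): split off the exterior, Taylor-expand $(\U+\psi)^{2n/(n-2)}$ to second order, kill the linear term via the divergence identity $\frac{2n}{n-2}\U^{\frac{n+2}{n-2}}\psi=\mathrm{div}(\U^{\frac{2n}{n-2}}V)$, and use the quadratic decay $|\psi|\leq C(\e+|y|)^2\U$ to push the cubic remainder to $O(\e^6)$. One small inaccuracy: on the annulus $\Omega_{2\rho}\setminus\Omega_\rho$ the test function is not purely $\e^{\frac{n-2}{2}}(1-\chi_\rho)G$ but also carries the piece $\chi_\rho(\U+\psi)$; however both contributions there are $O(\U)$, so your claimed bound $O(\e^n\rho^{-n})$ for the exterior integral is unaffected.
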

\begin{proof}
The proof follows from the same lines in Lemma \ref{lem:exp_vol_M} but with the estimate \eqref{est:psi_higher_dim}.
\end{proof}
\begin{lemma}\label{lem:volume_bd_n=6,7}
Assume that $n\geq 7$ and $\pa M$ is umbilic, then if $0<\e\ll\rho<\rho_0$ for some sufficiently small $\rho_0$, the volume on the boundary has 
\begin{align}\label{eq:volume_bd_n=6,7}
\int_{\pa M}\ubar^{\frac{2(n-1)}{n-2}}d\sigma_{g_{x_0}}=&B+B_1+B_2+O(\rho^{-6}\e^6),
\end{align}
where
\begin{align*}
B_1=&\frac{2(n-1)}{n-2}\int_{D_\rho}\U^{\frac{n}{n-2}}\psi d\sigma=\frac{1}{2}\int_{D_\rho}\U^{\frac{2(n-1)}{n-2}}S_{nn}d\sigma+O(\e^{n-1}\rho^{3-n}),\\
B_2=&\frac{n(n-1)}{(n-2)^2}\int_{D_\rho}\U^{\frac{2}{n-2}}\w^2d\sigma
\end{align*}
with $|B_1|\leq C(n,T_c)\e^2, |B_2|\leq C(n,T_c)\e^4$.
\end{lemma}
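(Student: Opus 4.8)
The plan is to run the argument of Lemma~\ref{lem:exp_vol_bdry} essentially verbatim, but to feed in the sharpened pointwise bound~\eqref{est:psi_higher_dim}, which is available here because $\pa M$ is umbilic; this is precisely what makes the cubic and higher terms of the binomial expansion negligible to order $\e^4$ and upgrades the remainder from $O(\e^3)$ to $O(\rho^{-6}\e^6)$.

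First I would localize. Splitting $\pa M=\Psi_{x_0}(D_\rho)\cup\big(\pa M\cap(\Omega_{2\rho}\setminus\Omega_\rho)\big)\cup(\pa M\setminus\Omega_{2\rho})$, we have $\ubar\circ\Psi_{x_0}=\U+\w$ on $D_\rho$ since $\chi_\rho\equiv1$ there, $0\le\ubar\le(\U+\w)\circ\Psi_{x_0}^{-1}$ on the intermediate annulus, and $\ubar=\e^{(n-2)/2}(1-\chi_\rho)\circ\Psi_{x_0}^{-1}\,G$ on the outer region. Using~\eqref{est:bubble_fcn} for $\U$,~\eqref{est:psi_higher_dim} for $\w$, and the fact that $G(\Psi_{x_0}(y))$ behaves like $|y|^{2-n}$ near $x_0$ and is bounded away from it, the two latter contributions are each $O(\e^{n-1}\rho^{1-n})$, which for $n\ge7$ is $O(\rho^{-6}\e^6)$; replacing $d\sigma_{g_{x_0}}$ by $\big(1+O(|y|^{N})\big)\,d\sigma$ with $N=2d+2$ as in~\eqref{vol_conf_Fermi_coor} costs an error of the same type. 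Thus the claim reduces to expanding $\int_{D_\rho}(\U+\w)^{\frac{2(n-1)}{n-2}}\,d\sigma$.

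Since $|\w|\le C(\e+|y|)^2\U\le C\rho^2\,\U$ for $\rho$ small by~\eqref{est:psi_higher_dim}, the binomial expansion reads
\begin{align*}
(\U+\w)^{\frac{2(n-1)}{n-2}}=\U^{\frac{2(n-1)}{n-2}}+\frac{2(n-1)}{n-2}\U^{\frac{n}{n-2}}\w+\frac{n(n-1)}{(n-2)^2}\U^{\frac{2}{n-2}}\w^2+O\!\big((\e+|y|)^6\,\U^{\frac{2(n-1)}{n-2}}\big).
\end{align*}
Integrating over $D_\rho$, the leading term equals $B$ up to a tail $O(\e^{n-1}\rho^{1-n})$, the linear term is $B_1$, the quadratic term is $B_2$, and a scaling substitution $r=\e s$ shows the remainder integral is $o(\e^4)$ and of the stated order; the same scaling, using~\eqref{est:psi_higher_dim}, yields $|B_1|\le C(n,T_c)\e^2$ and $|B_2|\le C(n,T_c)\e^4$.

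It remains to recast $B_1$ through $S_{nn}$. On $D_\rho$ one has $y^n=0$, hence $V_n=0$ there, so by~\eqref{eq:def:psi} $\w=\pa_a\U\,V_a+\frac{n-2}{2n}\U\,\mathrm{div}V$ on $D_\rho$. Writing $\U^{\frac{n}{n-2}}\pa_a\U=\frac{n-2}{2(n-1)}\pa_a\big(\U^{\frac{2(n-1)}{n-2}}\big)$ and integrating by parts over $D_\rho\subset\mathbb{R}^{n-1}$ --- the boundary term over $\pa D_\rho\subset\{|y|=\rho\}$ being $O(\e^{n-1}\rho^{3-n})$ by~\eqref{est:V} and the umbilicity of $\pa M$ --- one obtains
\begin{align*}
B_1=\int_{D_\rho}\U^{\frac{2(n-1)}{n-2}}\Big(\tfrac{n-1}{n}\pa_nV_n-\tfrac{1}{n}\pa_aV_a\Big)\,d\sigma+O(\e^{n-1}\rho^{3-n}).
\end{align*}
By~\eqref{def:S:T}, $S_{nn}=2\pa_nV_n-\tfrac{2}{n}\mathrm{div}V=\tfrac{2(n-1)}{n}\pa_nV_n-\tfrac{2}{n}\pa_aV_a$, so the integrand equals $\tfrac{1}{2}\U^{\frac{2(n-1)}{n-2}}S_{nn}$, which is the asserted alternative expression for $B_1$. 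The only step carrying real content is this last one, where the divergence structure of $\w$ has to be pushed through the integration by parts and the equatorial boundary term over $\{|y|=\rho\}$ controlled via~\eqref{est:V}; the localization and the binomial expansion simply repeat Lemma~\ref{lem:exp_vol_bdry}, now with~\eqref{est:psi_higher_dim} in place of the cruder~\eqref{est:w}.
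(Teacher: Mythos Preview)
Your proof is correct and follows essentially the same route as the paper: localization via~\eqref{vol_conf_Fermi_coor}, binomial expansion with the refined bound~\eqref{est:psi_higher_dim}, and then the identification of $B_1$ with $\tfrac12\int_{D_\rho}\U^{\frac{2(n-1)}{n-2}}S_{nn}\,d\sigma$. The paper records the last step as a pointwise identity $\tfrac{2(n-1)}{n-2}\U^{\frac{n}{n-2}}\w=\sum_a\partial_a\big(\U^{\frac{2(n-1)}{n-2}}V_a\big)+\tfrac12\U^{\frac{2(n-1)}{n-2}}S_{nn}$ on $D_\rho$ and then integrates, which is exactly the integration by parts you spelled out.
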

\begin{proof}
By \eqref{vol_conf_Fermi_coor} we have 
\begin{align*}
\int_{\partial M}\ubar^{\frac{2(n-1)}{n-2}}d\sigma_{g_{x_0}}=&\int_{\partial\Omega_\rho\cap\partial M}((\U+\psi)\circ\Psi_{x_0}^{-1})^{\frac{2(n-1)}{n-2}}d\sigma_{g_{x_0}}+O(\e^{n-1}\rho^{1-n})\\
=&\int_{D_\rho }(\U+\psi)^{\frac{2(n-1)}{n-2}}d\sigma+O(\e^{2d+2})+O(\e^{n-1}\rho^{1-n}).
\end{align*}
By choosing $\rho$ small enough and \eqref{est:psi_higher_dim} we have 
\begin{align*}
&\int_{D_\rho}(\U+\psi)^{\frac{2(n-1)}{n-2}}d\sigma\\
=&\int_{D_\rho}\U^{\frac{2(n-1)}{n-2}}d\sigma+\frac{2(n-1)}{n-2}\int_{D_\rho}\U^{\frac{n}{n-2}}\psi d\sigma\\
&+\frac{(n-1)n}{(n-2)^2}\int_{D_\rho}\U^{\frac{2}{n-2}}\w^2d\sigma+O(\rho^{-6}\e^6)\\
:=& B_0+B_1+B_2+O(\rho^{-6}\e^6).
\end{align*}
Notice that
\begin{align*}
B_0=B+O(\e^{n-1}\rho^{1-n}).
\end{align*}
Then it follows from \eqref{eq:def:psi} and \eqref{eq:V} that
\begin{align*}
\frac{2(n-1)}{n-2}\U^{\frac{n}{n-2}}\w=\sum_{a=1}^{n-1}\partial_a(\U^{\frac{2(n-1)}{n-2}}V_a)+\frac{1}{2}\U^{\frac{2(n-1)}{n-2}}S_{nn}
\end{align*}
on $D_\rho$, an integration by parts gives
\begin{align*}
B_1=&\frac{1}{2}\int_{D_\rho}\U^{\frac{2(n-1)}{n-2}}S_{nn}d\sigma+\rho^{-1}\int_{\pa D_\rho}\U^{\frac{2(n-1)}{n-2}}V_a y^a d\sigma\no\\
=&\frac{1}{2}\int_{D_\rho}\U^{\frac{2(n-1)}{n-2}}S_{nn}d\sigma+O(\rho^{3-n}\e^{n-1})
\end{align*}
and $|B_1|=O(\e^2)$ by virtue of \eqref{est:V} and \eqref{est:h_higher_dim}.
\end{proof}

 However, it is a little bit tricky to estimate $E[\ubar]$ in this case. By adopting the same notation in \cite[formula (5.33)]{Chen-Sun}, we decompose
\begin{align}\label{eq:decom_energy_integrand}
&\frac{4(n-1)}{n-2}|\nabla(\U+\w)|^2_{g_{x_0}}+R_{g_{x_0}}(\U+\w)^2\no\\
=&\frac{4(n-1)}{n-2}|\nabla \U|^2+\frac{4(n-1)}{n-2}n(n+2)\U^{\frac{4}{n-2}}\w^2+\sum_{i=1}^4 J_i,
\end{align}
where $J_i, 1 \leq i \leq 4$ were defined in \cite[P.33]{Chen-Sun}.
By \cite[estimate (5.34)]{Chen-Sun}  and Lemma \ref{lem:volume_bd_n=6,7}, we obtain 
\begin{align}
\int_{B_\rho^+}J_1dy\leq&-\frac{8(n-1)}{n-2}\int_{D_\rho}\pa_n\U \w d\sigma+\int_{\pa^+B_\rho^+}(\U^2\pa_k h_{ik}-\pa_k\U^2 h_{ik})\frac{y^i}{|y|}d\sigma\nonumber\\
&+C\sum_{a,b=1}^{n-1}\sum_{|\alpha|=2}^d|\pa^\alpha h_{ab}|\epsilon^{n-2}\rho^{|\alpha|+2-n}+C\rho^{2d+4-n}\epsilon^{n-2}\no\\
=&\frac{8(n-1)c}{n-2}\int_{D_\rho}\U^{\frac{n}{n-2}}\w d\sigma+\int_{\pa^+B_\rho^+}(\U^2\pa_k h_{ik}-\pa_k\U^2 h_{ik})\frac{y^i}{|y|}d\sigma\no\\
&+O(\e^{n-2}\rho^{4-n})\no\\
=&4cB_1+\int_{\pa^+B_\rho^+}(\U^2\pa_k h_{ik}-\pa_k\U^2 h_{ik})\frac{y^i}{|y|}d\sigma+O(\e^{n-2}\rho^{4-n}),\label{est:J_1}
\end{align}
where the first identity follows from \eqref{prob:half-space}: $\partial_n\U=(n-2)T_c\U^{\frac{n}{n-2}}=-c\U^{\frac{n}{n-2}}$ on $D_\rho$. Using an intermediate estimate in \cite[estimate (5.35)]{Chen-Sun}, we obtain
\begin{align}\label{est:J_2}
&\int_{B_\rho^+}J_2dy\no\\
=&\overset{-K}{\overbrace{-\frac{1}{4}\int_{B_\rho^+}Q_{ik,l}Q_{ik,l}dy-2\int_{B_\rho^+}\U^{\frac{2n}{n-2}}T_{ik}T_{ik} dy}}+\int_{\pa^+B_\rho^+}\xi_i\frac{y^i}{|y|}d\sigma-\int_{D_{\rho}}\xi_nd\sigma\no\\
=&-K+O(\e^{n-2}\rho^{6-n})+\frac{n+2}{2(n-2)}\int_{D_\rho}\U\pa_n\U S_{nn}^2d\sigma+4cB_2,
\end{align}
where the vector field $\xi$ was defined in \cite[Proposition 5.3]{Chen-Sun} and we have used
$$\int_{\pa^+B_\rho^+}\xi_i\frac{y^i}{|y|}d\sigma=O(\e^{n-2}\rho^{6-n})$$
and the exact expression of $\xi_n$ (see  \cite[(5.28)]{Chen-Sun}). Since $\pa M$ is umbilic, using an independent estimate in \cite[Corollary 12]{Brendle2}, we obtain an improved estimate of $J_3$ and $J_4$ (c.f. \cite[estimate (5.37)]{Chen-Sun})
\begin{align*}
&J_3+J_4\\
\leq&C \sum_{a,b=1}^{n-1}\sum_{|\alpha|=2}^d (|\pa^\alpha h_{ab}|^2(\e+|y|)^{2|\alpha|+4-2n}+|\pa^\alpha h_{ab}|(\e+|y|)^{|\alpha|+d+3-2n})\e^{n-2}\\
&+C(\e+|y|)^{2d+4-2n}\e^{n-2}.
\end{align*}
Thus, we have
\begin{align}\label{est:J_3+J_4}
\int_{B_\rho^+} (J_3+J_4)dy\leq 
\begin{cases}
C\e^{n-2}\log \frac{\rho}{\e}, &\mathrm{~~if~~} n=7,8,\\
C \e^6, &\mathrm{~~if~~} n\geq 9.
\end{cases}
\end{align}
Moreover, by \eqref{vol_conf_Fermi_coor} and \eqref{est:h_higher_dim} we have
\begin{equation}\label{est:mean_curv_hot}
\int_{D_\rho}h_{g_{x_0}}(\U+\w)^2 d\sigma\leq C \int_{D_\rho}|y|^{2d+1}(\U+\w)^2 d\sigma\leq C \epsilon^{n-2}\rho^{2d+4-n}.
\end{equation}
Putting the above estimates \eqref{est:J_1}-\eqref{est:mean_curv_hot} together, from \eqref{eq:decom_energy_integrand} we obtain
\begin{align}\label{est:E_inside_higher_dim}
&E[\ubar;\Omega_\rho]\no\\
=&\frac{4(n-1)}{n-2}\int_{B_\rho^+}|\nabla \U|^2dy+4(n-1)(n-2)A_2+4c(B_1+B_2)+O(\e^5\log \frac{\rho}{\e})\no\\
&-K+\frac{n+2}{2(n-2)}\int_{D_\rho}\U\pa_n\U S_{nn}^2d\sigma+\int_{\pa^+B_\rho^+}(\U^2\pa_k h_{ik}-\pa_k\U^2 h_{ik})\frac{y^i}{|y|}d\sigma.
\end{align}
Using \cite[estimate (5.48)]{Chen-Sun}, when $\e\ll\rho<\rho_0$ for some sufficiently small $\rho_0$ we have
\begin{align}\label{est:E_outside_higher_dim}
&E[\ubar;M\backslash\Omega_\rho]\no\\
\leq &\frac{4(n-1)}{n-2}\int_{\partial^+B_\rho^+}\left[-\partial_i\U\U+\partial_j\U\U h_{ij}-\epsilon^{\frac{n-2}{2}}(\U\partial_iG-G\partial_i\U)\right]\frac{y^i}{|y|} d\sigma\no\\
&+O(\e^{n-2}\rho^{4-n}).
\end{align}
Testing problem \eqref{prob:half-space} with $\U$ and integrating over $B_\rho^+$, we obtain
 \begin{align}\label{est:int_bubble_B_rho^+}
&\int_{B_\rho^+}|\nabla \U|^2dy-\int_{\pa^+B_\rho^+}\U\pa_i \U \frac{y^i}{|y|} d\sigma\no\\
=&n(n-2)\int_{B_\rho^+}\U^{\frac{2n}{n-2}}dy-(n-2)T_c\int_{D_\rho}\U^{\frac{2(n-1)}{n-2}}d\sigma\no\\
=&n(n-2)A+cB+O(\e^{n-1}\rho^{1-n}).
\end{align}
Combining estimates \eqref{est:E_inside_higher_dim} and \eqref{est:E_outside_higher_dim}, when $\e<\rho\ll \rho_0$ for some sufficiently small $\rho_0$, we conclude that
\begin{align}
&E[\ubar]\no\\
=&\frac{4(n-1)}{n-2}\left(\int_{B_\rho^+}|\nabla \U|^2dy-\int_{\partial^+B_\rho^+}\partial_i\U\U\frac{y^i}{|y|} d\sigma\right)\no\\
&+4(n-1)(n-2)A_2+4c(B_1+B_2)-K\no\\
&+\frac{n+2}{2(n-2)}\int_{D_\rho}\U\pa_n\U S_{nn}^2d\sigma+\underline{\int_{\pa^+B_\rho^+}(\U^2\pa_j h_{ij}+\frac{n}{n-2}\pa_j\U^2 h_{ij})\frac{y^i}{|y|}d\sigma}\no\\
&\underline{-\frac{4(n-1)}{n-2}\epsilon^{\frac{n-2}{2}}\int_{\partial^+ B_\rho^+}(\U\partial_iG-G\partial_i\U)\frac{y^i}{|y|}d\sigma}+O(\e^5\log \frac{\rho}{\e})\no\\
=&\frac{4(n-1)}{n-2}[n(n-2)A+cB]+4(n-1)(n-2)A_2+4c(B_1+B_2)\no\\&-K+\frac{n+2}{2(n-2)}\int_{D_\rho}\U\pa_n\U S_{nn}^2d\sigma+O(\e^5\log \frac{\rho}{\e}). \label{eq:energy_n=6,7}
\end{align}
Here the second identity follows from \eqref{est:int_bubble_B_rho^+} and a rough estimate of \cite[(5.50)]{Chen-Sun}, which indicates that the underlined terms are of order $O(\e^{n-2})$.

Comparing \eqref{eq:volume_in_n=6,7}, \eqref{eq:volume_bd_n=6,7}, \eqref{eq:energy_n=6,7}  and \eqref{eq:A}, \eqref{eq:B}, \eqref{eq:E} respectively, we write
\begin{align*}
\tilde A=&A_2, \quad \tilde B=B_1+B_2,\\
\frac{4(n-1)}{n-2}\tilde E=&4(n-1)(n-2)A_2+4c\tilde B-K+\frac{n+2}{2(n-2)}\int_{D_\rho}\U\pa_n\U S_{nn}^2 d\sigma.
\end{align*}
Hence we have $|\tilde A|\leq C\e^4,|\tilde B|\leq C\e^2$ and $\tilde E\leq C\e^2$ as required. On the other hand, by \eqref{est:V} and \eqref{def:S:T}, we estimate
$$K=O(\e^4)\quad\mathrm{and}\quad \int_{D_\rho}\U\pa_n\U S_{nn}^2 d\sigma=O(\e^4).$$
Consequently, keeping in mind that $A_2=O(\e^4)$ by Lemma \ref{lem:volume_in_n=6,7} and $B_1=O(\e^2), B_2=O(\e^4)$ by Lemma \ref{lem:volume_bd_n=6,7}, we plug these estimates into \eqref{eq:level_n=6,7} to get
\begin{align*}
&\max_{0\leq t<\infty}I[t\ubar]\\
=&S_c-K+\frac{n+2}{2(n-2)}\int_{D_\rho}\U\pa_n\U S_{nn}^2 d\sigma+\frac{2(n-1)(\tilde E-c\tilde B)^2}{2n(n-2)A+cB}+o(\e^4)\\
\leq &S_c-K+\frac{n+2}{2(n-2)}\int_{D_\rho}\U\pa_n\U S_{nn}^2 d\sigma+\Lambda c^2B_1^2+o(\e^4),
\end{align*}
where $\Lambda$ can be chosen as
\begin{align}\label{eq:L-precise}
\Lambda=\frac{2}{(n-1)(2n(n-2)A+cB)}.
\end{align}

In order to prove \eqref{ieq:goal_less}, it remains to show
\begin{align*}
\frac{n+2}{2(n-2)}\int_{D_\rho}\U\pa_n\U S_{nn}^2 d\sigma+\Lambda c^2B_1^2< K+o(\e^4)
\end{align*}
for sufficiently small $\e>0$. Recall definitions of $B_1$ and $K$, it suffices to show
\begin{align*}
&\frac{n+2}{2(n-2)}\int_{D_\rho}\U\pa_n\U S_{nn}^2 d\sigma+\frac{\Lambda}{4}\left(\int_{D_\rho}\U\pa_n\U S_{nn}d\sigma\right)^2\\
< &\frac{1}{4}\int_{B_\rho^+}Q_{ik,l}Q_{ik,l}dy+2\int_{B_\rho^+}\U^{\frac{2n}{n-2}}T_{ik}T_{ik} dy+o(\e^4).
\end{align*}

Notice that $S_{nn}+T_{nn}=H_{nn}=0$ on $D_\rho$, then the above inequality becomes
\begin{align}\label{est:goal_higher_dim}
&\frac{n+2}{2(n-2)}\int_{D_\rho}\U\pa_n\U T_{nn}^2 d\sigma+\frac{\Lambda}{4}\left(\int_{D_\rho}\U\pa_n\U T_{nn}d\sigma\right)^2\no\\
< &\frac{1}{4}\int_{B_\rho^+}Q_{ik,l}Q_{ik,l}dy+2\int_{B_\rho^+}\U^{\frac{2n}{n-2}}T_{ik}T_{ik}dy+o(\e^4)
\end{align}
for any $c \in \mathbb{R}$, when $0<\e\ll\rho<\rho_0$ for some sufficiently small $\rho_0$.

\begin{theorem}
Assume that $n\geq 7$, $\pa M$ is umbilic and the Weyl tensor $W_{g_0}$ of $M$ is non-zero at some $x_0\in \pa M$. Then problem \eqref{eq:main_eq_on_M} is solvable for all non-negative constant $c$.
\end{theorem}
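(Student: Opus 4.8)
The plan is to exhibit a single test function $\ubar$ satisfying \eqref{ieq:goal_less}; once this is in place, the Mountain Pass Lemma together with the compactness Lemma \ref{lem:compactness} produces a nontrivial critical point of $I$, which by the maximum principle and the regularity theory in \cite{Cherrier} is positive and smooth, hence solves \eqref{eq:main_eq_on_M} and yields a conformal metric with scalar curvature $4n(n-1)$ and boundary mean curvature $2c/(n-2)$. I would take for $\ubar$ precisely the function in \eqref{test_fcn_Chen_Sun} (non-negative once $\rho$ is small, by \eqref{est:psi_higher_dim}), exactly as set up in this subsection. All the work already done here — Lemmas \ref{lem:volume_in_n=6,7} and \ref{lem:volume_bd_n=6,7}, the energy expansion \eqref{eq:energy_n=6,7}, and the algebraic identity \eqref{eq:level_n=6,7} — has reduced \eqref{ieq:goal_less} to the single analytic inequality \eqref{est:goal_higher_dim}; so the whole proof amounts to verifying \eqref{est:goal_higher_dim} in the range $c\ge0$.

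The one feature that makes the non-negative range clean is a sign. Since $c\ge0$ we have $T_c=-c/(n-2)\le0$, and the boundary condition in \eqref{prob:half-space} gives $\partial_n\U=(n-2)T_c\U^{n/(n-2)}=-c\,\U^{n/(n-2)}\le0$ on $D_\rho$, so $\U\,\partial_n\U\le 0$ there. Hence the first term $\frac{n+2}{2(n-2)}\int_{D_\rho}\U\,\partial_n\U\,T_{nn}^2\,d\sigma$ on the left of \eqref{est:goal_higher_dim} is non-positive and may be discarded, and it remains to prove
$$
\frac{\Lambda}{4}\left(\int_{D_\rho}\U\,\partial_n\U\,T_{nn}\,d\sigma\right)^2<\frac14\int_{B_\rho^+}Q_{ik,l}Q_{ik,l}\,dy+2\int_{B_\rho^+}\U^{\frac{2n}{n-2}}T_{ik}T_{ik}\,dy+o(\e^4),
$$
with $\Lambda$ as in \eqref{eq:L-precise}. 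Using $T_{nn}=-S_{nn}$ on $D_\rho$ and Lemma \ref{lem:volume_bd_n=6,7}, the left-hand side equals $\Lambda c^2B_1^2+o(\e^4)$, while the right-hand side equals $K+o(\e^4)$ with $K=O(\e^4)$.

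To close this residual gap I would rescale $y=\e z$: after rescaling, both $\Lambda c^2B_1^2$ and $K$ become $\e^4$ times quantities determined only by the second-order jet of $g_{x_0}$ at $x_0$ (the coefficients $\partial^\alpha h_{ij}(0)$ with $|\alpha|=2$, which encode the relevant curvature data) and by the minimizing vector field of the rescaled version of \eqref{eq:V}, so the inequality is $\e$-free and the $\rho$-dependence is absorbed into the error. The proof then rests on two points. First, a strictly positive lower bound $K\ge c_{x_0}\e^4$ with $c_{x_0}>0$: this is forced by $W_{g_0}(x_0)\neq0$ through the Weyl-tensor expansion \eqref{expan_Weyl_tensor} together with \eqref{est:h_higher_dim} and the fact that, $\partial M$ being umbilic, the minimizing field cannot make $T_{ik}$ vanish identically when $\sum_{a,b}\sum_{|\alpha|=2}|\partial^\alpha h_{ab}|^2>0$. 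Second — and this is the main obstacle — a quantitative control showing that $\Lambda c^2 B_1^2$ stays strictly below $K$ for every $c\ge0$, not just on a bounded set of $c$; this requires tracking how both quantities depend on the parameter $T_c$ (the bubble and its boundary trace degenerate as $c\to+\infty$, which governs the $c$-decay of $B_1$ and of $K$), and is exactly the technical result deferred to Appendix \ref{Appen}. The sign observation above disposes of the first term of \eqref{est:goal_higher_dim}, but the uniform-in-$c\in[0,\infty)$ comparison of the surviving $B_1^2$-term against $K$ is where the real difficulty lies.

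Granting this inequality, \eqref{est:goal_higher_dim} holds whenever $c\ge0$, so \eqref{ieq:goal_less} holds for $\ubar$ with $\e$ sufficiently small; hence $0<\e_0\le I_{mp}<S_c$. Lemma \ref{lem:compactness} then shows that $I_{mp}$ is a critical value of $I$, and since $I_{mp}>0=I[0]$ the associated critical point is nontrivial, hence a positive smooth solution of \eqref{eq:main_eq_on_M}. This completes the proof for all non-negative $c$.
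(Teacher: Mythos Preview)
Your reduction to \eqref{est:goal_higher_dim} is correct, and so is the sign observation $\U\,\partial_n\U\le0$ on $D_\rho$ when $c\ge0$. The gap is in what you do with that sign. You \emph{discard} the term $\frac{n+2}{2(n-2)}\int_{D_\rho}\U\,\partial_n\U\,T_{nn}^2\,d\sigma$ on the grounds that it is non-positive, but that term sits on the \emph{left} of the inequality you want to prove, so throwing it away makes the inequality strictly harder, not easier. You are then left with proving $\Lambda c^2 B_1^2<K+o(\e^4)$ uniformly in $c\ge0$, which you do not actually prove: you attribute it to Appendix~\ref{Appen}, but the Appendix contains only the compactness lemma and Proposition~\ref{app:prop:1} (the lower bound $K\ge C\e^4$); there is no comparison of $\Lambda c^2 B_1^2$ against $K$ there, and no such result is stated anywhere in the paper.

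The paper's actual argument keeps both left-hand terms and plays them against each other. By H\"older,
\[
\Big(\int_{D_\rho}\U\,\partial_n\U\,T_{nn}\,d\sigma\Big)^2\le\int_{D_\rho}\U(-\partial_n\U)\,d\sigma\cdot\int_{D_\rho}\U(-\partial_n\U)\,T_{nn}^2\,d\sigma,
\]
and since $\int_{D_\rho}\U(-\partial_n\U)\,d\sigma\le cB$, the explicit form \eqref{eq:L-precise} of $\Lambda$ gives the dimensional bound
\[
\frac{\Lambda}{4}\int_{D_\rho}\U(-\partial_n\U)\,d\sigma\le\frac{cB}{2(n-1)\bigl(2n(n-2)A+cB\bigr)}\le\frac{1}{2(n-1)}<\frac{n+2}{4(n-2)}\quad(n\ge7).
\]
Hence the second (positive) term on the left of \eqref{est:goal_higher_dim} is at most $\frac{n+2}{4(n-2)}\int_{D_\rho}\U(-\partial_n\U)\,T_{nn}^2\,d\sigma$, which is exactly one half of the absolute value of the first (negative) term. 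The whole left side is therefore $\le\frac{n+2}{4(n-2)}\int_{D_\rho}\U\,\partial_n\U\,T_{nn}^2\,d\sigma\le0$. The strict inequality \eqref{est:goal_higher_dim} then follows because the right side is $\ge C_1\e^4>0$ by Proposition~\ref{app:prop:1} and \eqref{est:h_higher_dim}. The missing idea is precisely this absorption via H\"older and the explicit $\Lambda$, which makes the uniform-in-$c$ issue disappear; your route does not reach a proof without it.
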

\begin{proof}
Based on the above estimates, it reduces to showing \eqref{est:goal_higher_dim} for all non-negative constant $c$.
Without loss of generality we assume that $T_{nn}\not \equiv 0$ on $D_\rho$, otherwise it is trivial. By H\"older's inequality we have
\begin{align*}
\left(\int_{D_\rho}\U\pa_n\U T_{nn}d\sigma\right)^2\leq \int_{D_\rho}\U(-\pa_n\U) d\sigma\int_{D_\rho}\U(-\pa_n\U) T_{nn}^2d\sigma,
\end{align*}
since from \eqref{prob:half-space} that $\pa_n\U=-c\U^{\frac{n}{n-2}}$ on $D_\rho$. Together with \eqref{eq:L-precise} and $n\geq 7$, we obtain
\begin{align*}
\frac{\Lambda}{4}\int_{D_\rho}\U(-\partial_n\U)d\sigma\leq \frac{\Lambda}{4}cB\leq \frac{1}{2(n-1)}< \frac{n+2}{4(n-2)}.
\end{align*}
Then for any $c\geq 0$,
\begin{align}
\mathrm{LHS~~of~~}\eqref{est:goal_higher_dim}\leq \frac{n+2}{4(n-2)}\int_{D_\rho}\U\pa_n\U T_{nn}^2 d\sigma\leq 0.\label{est:LHS_positive_c}
\end{align}

On the other hand, thanks to estimate \eqref{est:bubble_fcn}, \eqref{est:h_higher_dim} and the assumption that $\pa M$ is umbilic, we can apply Proposition \ref{app:prop:1} to show
\begin{align}
\int_{B_\rho^+}Q_{ik,l}Q_{ik,l}dy\geq \lambda^\ast \e^{n-2}\int_{B_\rho^+}(\e+|y|)^{6-2n}dy\geq C\lambda^\ast \e^4 \label{eq:Q_S_e4}
\end{align}
for all $\rho \geq 2(1+|T_c|)\e$, where $\lambda^\ast=\lambda^\ast(n)>0$ and we have used \eqref{est:h_higher_dim} in the first inequality. Then \eqref{eq:Q_S_e4} implies that
\begin{align}
\frac{1}{4}\int_{B_\rho^+}Q_{ik,l}Q_{ik,l}dy+2\int_{B_\rho^+}\U^{\frac{2n}{n-2}}T_{ik}T_{ik} dy\geq C_1\e^4, \label{est:RHS_positive_c}
\end{align}
where $C_1=C_1(n)>0$. Hence the estimate \eqref{est:goal_higher_dim} follows from \eqref{est:LHS_positive_c} and \eqref{est:RHS_positive_c}.
\end{proof}

For any non-negative constant $c$, the selection of $\Lambda$ and the sign of $c$ are very crucial in the above verification of estimate \eqref{est:goal_higher_dim}.
However, at present we are not sure whether \eqref{est:goal_higher_dim} is true for all negative constants $c$. Instead, we consider the inequality \eqref{est:goal_higher_dim} on a spherical cap. This is realized by a pull-back of a stereographic projection from the spherical cap combined with an application of the sharp Sobolev trace inequality.

\begin{theorem}\label{thm:negative_c_higher_dim}
Let $n\geq 7$ and $c$ be a negative real number. Assume that $\pa M$ is umbilic and the Weyl tensor $W_{g_0}$ of $M$ is non-zero at some $x_0\in \pa M$, then problem \eqref{eq:main_eq_on_M} is solvable for all $c \in [-c_0,0)$, where $c_0$ is a positive dimensional constant.
\end{theorem}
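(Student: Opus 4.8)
The plan is to reduce the theorem, via Lemma~\ref{lem:compactness} and the Mountain Pass Lemma exactly as in Subsection~\ref{Subsect5.1}, to proving the inequality \eqref{est:goal_higher_dim} for every $c$ in some interval $[-c_0,0)$, and then to exploit that its left-hand side is uniformly small for $|c|$ small. By \eqref{prob:half-space}, on $D_\rho$ we have $\pa_n\U=-c\,\U^{n/(n-2)}$, hence $\U\pa_n\U=-c\,\U^{2(n-1)/(n-2)}$; since $c<0$, both terms on the left of \eqref{est:goal_higher_dim} are now non-negative, which is precisely why the argument of the preceding theorem breaks down. However, the Cauchy--Schwarz inequality gives
\begin{align*}
\left(\int_{D_\rho}\U\pa_n\U\,T_{nn}\,d\sigma\right)^{2}\le\int_{D_\rho}\U\pa_n\U\,d\sigma\int_{D_\rho}\U\pa_n\U\,T_{nn}^{2}\,d\sigma\le C(n)\,c^{2}\int_{D_\rho}\U^{\frac{2(n-1)}{n-2}}T_{nn}^{2}\,d\sigma,
\end{align*}
and, since $\Lambda$ stays bounded uniformly for $c\in[-c_0,0)$ by \eqref{eq:L-precise}, the whole left-hand side of \eqref{est:goal_higher_dim} is at most $\bigl(\tfrac{n+2}{2(n-2)}|c|+C(n)c^{2}\bigr)\int_{D_\rho}\U^{2(n-1)/(n-2)}T_{nn}^{2}\,d\sigma+o(\e^{4})$, with a prefactor that vanishes as $c\to0^{-}$.

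Therefore it suffices to establish a $c$-independent weighted trace inequality
\begin{align*}
\int_{D_\rho}\U^{\frac{2(n-1)}{n-2}}T_{nn}^{2}\,d\sigma\le\Theta_n\left[\frac14\int_{B_\rho^{+}}Q_{ik,l}Q_{ik,l}\,dy+2\int_{B_\rho^{+}}\U^{\frac{2n}{n-2}}T_{ik}T_{ik}\,dy\right]+o(\e^{4})
\end{align*}
for some dimensional constant $\Theta_n<\infty$ and all $0<\e\ll\rho<\rho_0$: granting this, choosing $c_0>0$ so small that $\bigl(\tfrac{n+2}{2(n-2)}c_0+C(n)c_0^{2}\bigr)\Theta_n<1$ yields \eqref{est:goal_higher_dim}, because by \eqref{eq:Q_S_e4} the bracketed quantity is bounded below by a positive dimensional multiple of $\e^{4}$, which dominates the $o(\e^{4})$ errors once $\e$ is small. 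Note that \emph{sharpness} of $\Theta_n$ is not needed here, only finiteness, since $c_0$ can always be shrunk afterwards.

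To prove the weighted trace inequality I would argue as follows. Using that $T$ is a trace-free symmetric $2$-tensor with $T_{an}=0$ on $D_\rho$ and $\pa_j(\U^{2n/(n-2)}T_{ij})=0$ in $B_\rho^{+}$ (these follow from \eqref{eq:V} and the $g_{x_0}$-Fermi conditions $V_n=\pa_nV_a=0$ on $\mathbb{R}^{n-1}$ and $\chi_\rho\equiv1$ in $B_\rho^{+}$), together with the pointwise bounds on $T$ coming from \eqref{est:V}, \eqref{def:S:T} and \eqref{est:h_higher_dim}, one checks that all contributions from $\pa^{+}B_\rho^{+}$ and from the cut-off are $O(\e^{n-2}\rho^{6-n})=o(\e^{4})$ when $n\ge7$; hence we may replace $T$ by its leading part on all of $\mathbb{R}^{n}_{+}$. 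Next, pull everything back by the stereographic projection $F:\mathbb{R}^{n}_{+}\to S^{n}_{+}$ attached to the conformal factor $\U^{4/(n-2)}$, under which --- after the translation already built into \eqref{eq:bdry_bubble} --- $S^{n}_{+}$ is the round hemisphere with totally geodesic boundary. The conformal weights are arranged precisely so that the boundary integral, the term $\int Q_{ik,l}Q_{ik,l}\,dy$, and the term $\int\U^{2n/(n-2)}T_{ik}T_{ik}\,dy$ become, respectively, a boundary $L^{2}$-norm, an interior Dirichlet-type energy, and an interior $L^{2}$-norm of the conformally transformed tensor $\mathcal{T}$ on $S^{n}_{+}$, while the trace-free and weighted-divergence-free structures are conformally natural and persist. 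On $S^{n}_{+}$ one then rewrites the normal--normal boundary component $\mathcal{T}_{\nu\nu}$ through interior data by integrating the divergence identity along the normal geodesics, which reduces the claim to the scalar sharp Sobolev trace inequality on $B^{n}$ applied to $\U$-weighted combinations of the components of $\mathcal{T}$; tracking the constants then produces $\Theta_n$.

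The main obstacle is precisely this last step. A direct appeal to trace and Sobolev embeddings on $B_\rho^{+}$ would give a constant that blows up as $\e\to0$ or $\rho\to0$, because the natural norms here are the $\U$-weighted ones rather than the flat Sobolev norm; one genuinely needs the conformal reformulation on $S^{n}_{+}$, where these weights become the round volume elements and the Hardy-type loss is absorbed. Moreover the trace-free and divergence-free structure of $\mathcal{T}$ must be used in full, since otherwise the boundary $L^{2}$-norm of $\mathcal{T}_{\nu\nu}$ is simply not controlled by the right-hand side. I expect the delicate part to be the separation of variables in the normal direction on $S^{n}_{+}$ needed to identify --- or at least to finitely bound --- $\Theta_n$; by contrast, the reduction to \eqref{est:goal_higher_dim}, the Cauchy--Schwarz estimate, and the final choice of $c_0$ are all routine.
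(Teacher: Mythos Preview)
Your strategy is essentially the paper's: reduce to \eqref{est:goal_higher_dim}, pull back by the stereographic projection attached to the conformal factor $\U^{4/(n-2)}$, and control the boundary terms by a trace inequality. Two points deserve correction.

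First, a geometric error: the image is \emph{not} the round hemisphere with totally geodesic boundary. The translation built into \eqref{eq:bdry_bubble} shifts the bubble center to $T_c\e\,\textbf{e}_n$, but the domain is still $\{y^n>0\}$; under the projection it becomes a spherical cap $\Sigma=\Sigma(c)$ whose boundary has constant mean curvature $2c/(n-2)\neq 0$. This is exactly what the paper does (see the paragraph defining $\Sigma=\pi^{-1}(\mathbb{R}^n_+)$ and the computation $h_{g_\Sigma}=2c/(n-2)$). If you literally tried to use $S^n_+$, the conformal weights in $Q_{ik,l}$ and $\U^{2n/(n-2)}T_{ik}T_{ik}$ would not match the round volume element and the argument would collapse. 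Fortunately, once you replace $S^n_+$ by $\Sigma(c)$ everywhere, your plan goes through: for $c\in[-1,0)$ the caps $\Sigma(c)$ form a compact family and their $H^1\to L^2$ trace constants are uniformly bounded by some $\Theta_n$; then choose $c_0\le 1$ so that your product condition holds.

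Second, the trace step is much easier than you fear, and the divergence-free structure is \emph{not} needed. Pointwise $\mathcal{T}_{\nu\nu}^2\le |\mathcal{T}|_{g_\Sigma}^2$, and by Kato's inequality $|\nabla|\mathcal{T}||_{g_\Sigma}\le|\nabla\mathcal{T}|_{g_\Sigma}$, so the desired bound follows from the scalar trace theorem applied to $|\mathcal{T}|_{g_\Sigma}$. The paper does precisely this, but with the \emph{sharp} Escobar trace inequality \eqref{Sobolev_trace_on_cap} instead of a crude one; this yields the explicit condition \eqref{ineq:est1_higher_dim} and hence a concrete dimensional $c_0$. Your version, asking only for \emph{some} finite $\Theta_n$, gives a possibly smaller $c_0$ but is entirely adequate for the theorem as stated. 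Your worry about ``separation of variables in the normal direction'' and the need to integrate the divergence identity along normal geodesics is misplaced.
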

\begin{proof}
It suffices to prove the above inequality \eqref{est:goal_higher_dim} when $B_\rho^+$ and $D_\rho$ are replaced by $\mathbb{R}^n_+$ and $\mathbb{R}^{n-1}$, respectively, because the integrals outside these domains are $o(\e^4)$ through direct computations.

Let $\pi: S^n(T_c\textbf{e}_n) \setminus\{T_c \textbf{e}_n+\textbf{e}_{n+1}\} \to \{y+T_c \textbf{e}_n \in \mathbb{R}^{n+1};y^{n+1}=0\}\simeq \mathbb{R}^n$ be the stereographic projection from the unit sphere $S^n(T_c\textbf{e}_n)$ in $\mathbb{R}^{n+1}$ centered at $T_c \textbf{e}_n$. Let $\Sigma:=\pi^{-1}(\mathbb{R}^n_+) \subset S^n$ denote a spherical cap equipped with the metric $g_{\Sigma}=\frac{1}{4}g_{S^n}$, where $g_{S^n}$ is the standard round metric of $S^n$. Denote by $\nu=\nu_{g_\Sigma}$ the outward unit normal of $g_{\Sigma}$ on $\pa \Sigma$. We define $\mathcal{T}(x)=(\U^{4/(n-2)}T)\circ \pi(x)$ as a symmetric 2-tensor on $\Sigma$. In particular, we have $\mathcal{T}_{\nu\nu}:=\mathcal{T}(\nu_{g_\Sigma},\nu_{g_\Sigma})=T_{nn}$ on $\pa \Sigma$. Thus, the quantities $A$ and $B$ in \eqref{def:volumes} are the volumes of $(\Sigma, g_{\Sigma})$ and $\pa \Sigma$ with the induced metric of $g_\Sigma$, respectively. We have
\begin{align*}
(\pi^{-1})^\ast(g_\Sigma)=W_\e(y)^{\frac{4}{n-2}}g_{\mathbb{R}^n}
\end{align*}
and define as in Brendle \cite{Brendle2}
\begin{align*}
Q_{ij,k}=&\U \partial_kT_{ij}+\frac{2}{n-2}(\partial_l\U T_{il}\delta_{jk}+\partial_l\U T_{jl}\delta_{ik}-\partial_i\U T_{jk}-\partial_j\U T_{ik})\\
=&\U^{\frac{n-6}{n-2}} \nabla_k^\Sigma (\mathcal T\circ \pi^{-1})_{ij},
\end{align*}
where $\nabla_k^\Sigma$ denotes the covariant derivative with respect to the metric $g_\Sigma$ on $\Sigma$.
Then from \eqref{prob:half-space} we obtain 
\begin{align*}
\int_{\mathbb{R}^n_+}Q_{ik,l}Q_{ik,l}dy=&\int_{\Sigma}|\nabla\mathcal{T}|^2_{g_\Sigma}d\mu_{g_\Sigma},\\
\int_{\mathbb{R}^n_+}\U^{\frac{2n}{n-2}}T_{ik}T_{ik} dy=&\int_{\Sigma}|\mathcal{T}|_{g_\Sigma}^2d\mu_{g_\Sigma},\\
\int_{\mathbb{R}^{n-1}}\U\pa_n\U T_{nn}^2 d\sigma=&-c\int_{\partial\Sigma} \mathcal{T}_{\nu\nu}^2d\sigma_{g_\Sigma}.
\end{align*}
Thus, it suffices to show
\begin{align*}
\int_{\Sigma}(\frac{1}{4}|\nabla \mathcal{T}|_{g_\Sigma}^2+2|\mathcal{T}|_{g_\Sigma}^2)d\mu_{g_\Sigma}+\frac{(n+2)c}{2(n-2)}\int_{\partial \Sigma}|\mathcal{T}|_{g_\Sigma}^2d\sigma_{g_\Sigma}> \frac{\Lambda c^2}{4}\left(\int_{\partial\Sigma} \mathcal{T}_{\nu\nu} d\sigma_{g_\Sigma}\right)^2,
\end{align*}
which implies the estimate \eqref{est:goal_higher_dim}.

The sharp Sobolev trace inequality in \cite[Theorem 1]{escobar7} (see also \cite{li-zhu}) states that
\begin{align}\label{Sobolev_trace_on_half_space}
\frac{4(n-1)}{n-2}\int_{\mathbb{R}_+^n}|\nabla \psi|^2 dy
\geq Q(B^n,S^{n-1},[g_{\mathbb{R}^n}])\left(\int_{\mathbb{R}^{n-1}}|\psi|^{\frac{2(n-1)}{n-2}}d\sigma\right)^{\frac{n-2}{n-1}}
\end{align}
for any $\psi \in C^\infty(\overline{\mathbb{R}_+^n})$. Furthermore, the sharp constant is given by
$$Q(B^n,S^{n-1},[g_{\mathbb{R}^n}])=\frac{n-2}{2}\omega_{n-1}^{\frac{1}{n-1}}.$$
Given any $\varphi \in H^1(\Sigma,g_{\Sigma})$, from the conformal invariance \eqref{eq:conformal_invariance} of both $L_{g_\Sigma}$ and $B_{g_\Sigma}$, we set $\psi(y)=((\varphi\circ \pi^{-1})\U)(y)$ and use \eqref{Sobolev_trace_on_half_space} to show
\begin{align}\label{Sobolev_trace_on_cap}
&\int_{\Sigma}\left(\frac{4(n-1)}{n-2}|\nabla \varphi|_{g_\Sigma}^2+ R_{g_\Sigma} \varphi^2\right)d\mu_{g_\Sigma}+2(n-1)\int_{\partial \Sigma}h_{g_\Sigma}\varphi^2d\sigma_{g_\Sigma}\no\\
\geq& Q(B^n,S^{n-1},[g_{\mathbb{R}^n}])\left(\int_{\pa \Sigma}|\varphi|^{\frac{2(n-1)}{n-2}}d\sigma_{g_\Sigma}\right)^{\frac{n-2}{n-1}}.
\end{align}
Moreover, it is easy to show that $R_{g_\Sigma}=4n(n-1)$ and $h_{g_\Sigma}=2c/(n-2)$ in virtue of \eqref{prob:half-space}.

 By choosing $\varphi=|\mathcal{T}|_{g_\Sigma}$ and using Kato's inequality:
 $$|\nabla \mathcal{T}|_{g_\Sigma}\geq |\nabla|\mathcal{T}|_{g_\Sigma}|_{g_\Sigma},$$
 the above inequality \eqref{Sobolev_trace_on_cap} leads to 
\begin{align*}
&\int_{\Sigma}\left(\frac{4(n-1)}{n-2}|\nabla\mathcal{T}|^2_{g_\Sigma}+4n(n-1)|\mathcal{T}|_{g_\Sigma}^2\right)d\mu_{g_\Sigma}+\frac{4(n-1)c}{n-2}\int_{\partial\Sigma}|\mathcal{T}|_{g_\Sigma}^2d\sigma_{g_\Sigma}\\
\geq&Q(B^n,S^{n-1},[g_{\mathbb{R}^n}])\left(\int_{\pa \Sigma}|\mathcal{T}|_{g_\Sigma}^{\frac{2(n-1)}{n-2}}d\sigma_{g_\Sigma}\right)^{\frac{n-2}{n-1}}.
\end{align*}
Equivalently,
\begin{align*}
&\int_{\Sigma}\left(|\nabla \mathcal{T}|_{g_\Sigma}^2+n(n-2)|\mathcal{T}|_{g_\Sigma}^2\right)d\mu_{g_\Sigma}+c\int_{\partial \Sigma}|\mathcal{T}|^2_{g_\Sigma}d\sigma_{g_\Sigma}\\
\geq&\frac{n-2}{4(n-1)}Q(B^n,S^{n-1},[g_{\mathbb{R}^n}])\left(\int_{\pa \Sigma}|\mathcal{T}|_{g_\Sigma}^{\frac{2(n-1)}{n-2}}d\sigma_{g_\Sigma}\right)^{\frac{n-2}{n-1}}.
\end{align*}
Consequently, we conclude that
\begin{align}\label{est:lhs_higher_dim}
&\int_{\Sigma}(\frac{1}{4}|\nabla \mathcal{T}|_{g_\Sigma}^2+2|\mathcal{T}|_{g_\Sigma}^2)d\mu_{g_\Sigma}+\frac{(n+2)c}{2(n-2)}\int_{\partial \Sigma}|\mathcal{T}|_{g_\Sigma}^2d\sigma_{g_\Sigma}\no\\
\geq&\left(\frac{1}{4}-\frac{2}{n(n-2)}\right)\int_{\Sigma}|\nabla \mathcal{T}|_{g_\Sigma}^2d\mu_{g_\Sigma}+\left(\frac{n+2}{2(n-2)}-\frac{2}{n(n-2)}\right)c\int_{\pa \Sigma}| \mathcal{T}|_{g_\Sigma}^2d\sigma_{g_\Sigma}\no\\
&+\frac{1}{2n(n-1)}Q(B^n,S^{n-1},[g_{\mathbb{R}^n}])\left(\int_{\pa \Sigma}|\mathcal{T}|_{g_\Sigma}^{\frac{2(n-1)}{n-2}}d\sigma_{g_\Sigma}\right)^{\frac{n-2}{n-1}}\no\\
\geq&\left[\frac{n-2}{4n(n-1)}\omega_{n-1}^{\frac{1}{n-1}}+\frac{n^2+2n-4}{2n(n-2)}c B^{\frac{1}{n-1}}\right]\left(\int_{\pa \Sigma}|\mathcal{T}|_{g_\Sigma}^{\frac{2(n-1)}{n-2}}d\sigma_{g_\Sigma}\right)^{\frac{n-2}{n-1}}.
\end{align}

On the other hand, by H\"older's inequality we have
\begin{align}\label{est:rhs_higher_dim}
\frac{\Lambda}{4}\left(\int_{\mathbb{R}^{n-1}}\U\pa_n\U T_{nn}d\sigma\right)^2
\leq &\frac{\Lambda}{4}\int_{\mathbb{R}^{n-1}}\U\pa_n\U d\sigma\int_{\mathbb{R}^{n-1}}\U\pa_n\U T_{nn}^2d\sigma\no\\
\leq& \frac{\Lambda c^2}{4} B^{\frac{n}{n-1}}\left(\int_{\pa \Sigma}|\mathcal{T}|_{g_\Sigma}^{\frac{2(n-1)}{n-2}}d\sigma_{g_\Sigma}\right)^{\frac{n-2}{n-1}}.
\end{align}
In order to show \eqref{est:goal_higher_dim},  our goal is to find which $c<0$ satisfying
\begin{align}\label{ineq:est1_higher_dim}
&\frac{n-2}{4n(n-1)}\omega_{n-1}^{\frac{1}{n-1}}+\frac{n^2+2n-4}{2n(n-2)}c B^{\frac{1}{n-1}}
\geq\frac{1}{n(n-1)(n-2)}\frac{2^n}{\omega_n}c^2 B^{\frac{n}{n-1}}.
\end{align}
Assuming this claim temporarily, we obtain
\begin{align*}
\frac{1}{n(n-1)(n-2)}\frac{2^n}{\omega_n}c^2 B^{\frac{n}{n-1}}>\frac{1}{2(n-1)}\frac{1}{n(n-2)A}c^2 B^{\frac{n}{n-1}}> \frac{\Lambda c^2}{4} B^{\frac{n}{n-1}},
\end{align*}
where the second inequality follows from the fact that $A>\frac{1}{2^{n+1}}\omega_n$ for $c<0$ and the third inequality follows from the definition \eqref{eq:L-precise} of $\Lambda$ and $n(n-2)A+cB>0$. Thus, \eqref{est:goal_higher_dim} follows from \eqref{est:lhs_higher_dim} and \eqref{est:rhs_higher_dim}.

It remains to show \eqref{ineq:est1_higher_dim}. Let us collect some elementary facts on the geometric quantity $B$ for $c<0$. Recall that $T_c=-\frac{c}{n-2}$ and let
$\cos r=\frac{-T_c}{\sqrt{1+T_c^2}}$, then $r \in (\frac{\pi}{2},\pi)$ and $B=B(r)=2^{1-n}\omega_{n-1}(\sin r)^{n-1}$. In terms of the new variable $r$ we obtain
\begin{align}\label{eqs:geometric_qtn}
cB^{\frac{1}{n-1}}=\frac{n-2}{2}\omega_{n-1}^\frac{1}{n-1}\cos r \mathrm{~~and~~} c^2 B^{\frac{n}{n-1}}=\frac{(n-2)^2}{2^n}\omega_{n-1}^\frac{n}{n-1} \cos^2 r \sin^{n-2} r.
\end{align}
Substituting \eqref{eqs:geometric_qtn} into \eqref{ineq:est1_higher_dim}, we conclude that the inequality \eqref{est:goal_higher_dim} holds for all $r \in (\frac{\pi}{2},\pi)$ satisfying
$$1+\frac{(n-1)(n^2+2n-4)}{n-2}\cos r\geq 4 \frac{\omega_{n-1}}{\omega_n}\cos^2 r \sin^{n-2} r.$$
Therefore, we conclude from the above inequality that the inequality \eqref{est:goal_higher_dim} holds for all $c \in [-c_0, 0)$, where $c_0=c_0(n)>0$ is also determined by the above inequality.
 \end{proof}

\begin{remark}
Though we tend to believe that the test function \eqref{test_fcn_Chen_Sun} should be one of good choices of \eqref{ieq:goal_less}, all our attempts in this direction up to now have failed. As mentioned earlier, we will use a different type of test function in Subsection \ref{Subsect5.2} to verify \eqref{ieq:goal_less} in the remaining case of dimension $n=7$.
\end{remark}

\subsection{Non-positive constant boundary mean curvature}\label{Subsect5.2}

We first present some preliminary results.
\begin{proposition}\label{prop:expan_Fermi_metrics}
Suppose that $n \geq 7$ and $\pa M$ is umbilic, then under $g_{x_0}$-Fermi coordinates around $x_0 \in \pa M$, there hold $g_{x_0}^{an}=0$, $g_{x_0}^{nn}=1$ and
\begin{align*}
g_{x_0}^{ab}=&\delta_{ab}+\frac{1}{3}\bar{R}_{acbd}y^cy^d+R_{nanb} (y^n)^2\\
&+\frac{1}{6}\bar{R}_{acbd,e}y^cy^d y^e+R_{nanb,c}(y^n)^2y^c+\frac{1}{3}R_{nanb,n}(y^n)^3\\
&+\left(\frac{1}{20}\bar{R}_{acbd,ef}+\frac{1}{15}\bar{R}_{achd}\bar{R}_{behf}\right)y^cy^d y^e y^f\\
&+\left(\frac{1}{2}R_{nanb,cd}+{\rm Sym_{ab}}(\bar R_{aced} R_{nenb})\right)(y^n)^2 y^cy^d+\frac{1}{3}R_{nanb,nc}(y^n)^3 y^c\\
&+\frac{1}{12}(R_{nanb,nn}+8R_{nanc}R_{ncnb})(y^n)^4+O(|y|^5),
\end{align*}
where the curvature quantities are with respect to metric $g_{x_0}$ and evaluated at $x_0$, $\bar R_{abcd}$ is the Riemannian curvature tensor of $\pa M$. Moreover at $x_0$ there hold
\begin{align*}
&R_{nn}=R_{nn,n}=\bar R_{ab}=0,\quad -R_{,nn}-2(R_{nanb})^2=2R_{nanb,ab},\\
&-\bar \Delta R_{g_{x_0}}=\frac{1}{6}|\overline{W}_{g_{x_0}}|_{g_{x_0}}^2.
\end{align*}
\end{proposition}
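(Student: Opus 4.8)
The plan is to assemble the expansion of $g_{x_0}^{ab}$ from the two sources of coordinate dependence in Fermi coordinates — the tangential directions $\bar y$, governed by the Riemannian normal‑coordinate (Jacobi) expansion of the induced metric on $\partial M$, and the normal direction $y^n$, governed by the Riccati equation for the second fundamental forms of the level sets $\{y^n=t\}$ — and then to extract the pointwise identities at $x_0$ from the conformal Fermi normalization $\det g_{x_0}=1+O(|y|^N)$ together with the conformal normal coordinate identities on the boundary.

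\textbf{Step 1: the Fermi expansion of $g_{ab}$.} In $g_{x_0}$‑Fermi coordinates $g_{nn}=1$ and $g_{an}=0$ identically, so only the tangential block must be expanded. The dependence on $\bar y$ at $y^n=0$ is the normal‑coordinate expansion of the boundary metric $\bar g_{ab}$, with coefficients $\bar R_{acbd}$, $\bar\nabla_e\bar R_{acbd}$, $\bar\nabla_e\bar\nabla_f\bar R_{acbd}$ and the quadratic term $\bar R_{achd}\bar R_{behf}$. The dependence on $y^n$ follows by iterating $\partial_n g_{ab}=-2\pi_{ab}$ and the Riccati equation $\partial_n\pi_{ab}=-R_{nanb}+\pi_{ac}\pi^c{}_b$ for the level sets, producing $R_{nanb}$, $\nabla_n R_{nanb}$, $\nabla_n\nabla_n R_{nanb}$ and the quadratic term $R_{nanc}R_{ncnb}$. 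The mixed $(y^n)^p(\bar y)^q$ coefficients arise from differentiating the Riccati relation tangentially, giving $\bar\nabla R_{nanb}$, $\bar\nabla\bar\nabla R_{nanb}$ and the symmetrized product $\mathrm{Sym}_{ab}(\bar R_{aced}R_{nenb})$. Since $\partial M$ is umbilic, $\mathring\pi_{g_{x_0}}\equiv 0$, and in conformal Fermi coordinates $h_{g_{x_0}}=O(|y|^{N-1})$ with $N=2d+2\ge 6$ when $n\ge 7$; hence $\pi_{g_{x_0}}=h_{g_{x_0}}\,\bar g_{x_0}$ vanishes to order $N-1\ge 5$ at $x_0$, and every term of order at most four in the expansion that involves $\pi$ or its derivatives of order $\le 4$ drops out, leaving exactly the curvature terms listed. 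Inverting $g_{ab}=\delta_{ab}+P_{ab}$ via $g^{ab}=\delta_{ab}-P_{ab}+(P^2)_{ab}-\cdots$ up to order four flips the sign of the linear‑in‑curvature terms and generates the quadratic corrections $\bar R_{achd}\bar R_{behf}$, $\mathrm{Sym}_{ab}(\bar R_{aced}R_{nenb})$ and $R_{nanc}R_{ncnb}$ from the matrix square of the quadratic part of $P$; matching coefficients (a lengthy but routine computation, as in \cite{marques1,Brendle2,Chen-Sun}) produces the stated formula.

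\textbf{Step 2: the pointwise identities.} Because $g_{nn}=1$ and $g_{an}=0$, $\det g_{x_0}=\det(g_{ab})$, so $\log\det g_{x_0}=\mathrm{tr}(P)-\tfrac12\mathrm{tr}(P^2)+\cdots$ must vanish to order $N-1\ge 5$. The $(y^n)^2$‑coefficient is $R_{nn}(x_0)$ (using $\pi(x_0)=0$), the $|\bar y|^2$‑coefficient is a multiple of $\bar R_{ab}(x_0)$, and the $(y^n)^3$‑coefficient, after using these, is a multiple of $R_{nn,n}(x_0)$; hence $R_{nn}=R_{nn,n}=\bar R_{ab}=0$ at $x_0$. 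For the fourth relation, one combines the once‑ and twice‑contracted second Bianchi identities with the already‑established vanishings and with $\mathrm{Ric}_{g_{x_0}}(x_0)=\mathrm{diag}(R_{nanb},0)$ (Gauss–Codazzi plus $\pi(x_0)=0$, together with $\Gamma_{g_{x_0}}(x_0)=0$): contracting $\nabla^a R_{nanb}$ against $\nabla_b$ and commuting covariant derivatives, the curvature commutator produces the quadratic term $2(R_{nanb})^2$ and the remaining terms collapse to $-R_{,nn}$, giving $2R_{nanb,ab}=-R_{,nn}-2(R_{nanb})^2$ at $x_0$; equivalently this is the trace of the order‑four coefficient of $\log\det g_{x_0}$. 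Finally, restricting to $\partial M$: since $\det\bar g_{x_0}(\bar y)=\det g_{x_0}(\bar y,0)=1+O(|\bar y|^N)$, the $\bar y$ are Lee–Parker conformal normal coordinates for $(\partial M,\bar g_{x_0})$ at $x_0$, whence $-\bar\Delta\bar R_{\bar g_{x_0}}(x_0)=\tfrac16|\overline W_{g_{x_0}}(x_0)|^2$; using the Gauss equation $R_{g_{x_0}}|_{\partial M}=\bar R_{\bar g_{x_0}}-2R_{nn}|_{\partial M}+(\text{terms in }\pi)$ together with the established vanishings (which annihilate the $\pi$‑ and $R_{nn}$‑dependent corrections to the needed order at $x_0$, the $\bar\Delta R_{nn}$ contribution being controlled by the order‑four coefficient of $\log\det g_{x_0}$) identifies $\bar\Delta R_{g_{x_0}}(x_0)$ with $\bar\Delta\bar R_{\bar g_{x_0}}(x_0)$, which yields the last identity.

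\textbf{Main obstacle.} The substantive difficulty is bookkeeping: organizing the order‑four Taylor coefficients of $g_{ab}$ — especially the mixed normal/tangential coefficients and the quadratic curvature terms coming from inverting the metric and from squaring the quadratic part — so that everything matches the stated expression, and verifying that the $\pi$‑dependent terms genuinely do not enter at orders $\le 4$ (this is precisely where the hypothesis $n\ge 7$, via $N=2d+2\ge 6$, is used). A secondary subtlety is the transfer of the Lee–Parker identity from the intrinsic boundary scalar curvature $\bar R_{\bar g_{x_0}}$ to the ambient $R_{g_{x_0}}$, which requires controlling $\bar\Delta(R_{nn}|_{\partial M})$ at $x_0$; if the listed normalizations do not suffice directly, one supplies this from the order‑four consequence of $\det g_{x_0}=1+O(|y|^N)$ and the contracted second Bianchi identity.
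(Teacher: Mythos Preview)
Your proposal is correct and follows essentially the same approach as the paper. The paper's proof is much more terse: it simply observes that $n\ge 7$ gives $d\ge 2$, hence $h_{g_{x_0}}=O(|y|^{2d+1})$ and umbilicity force $\bar\nabla^\alpha\pi_{g_{x_0}}(x_0)=0$ for $|\alpha|\le 4$, then cites Marques \cite[Lemma 2.3]{marques1} (and \cite[Lemma 2.3]{Almaraz1}) for the metric expansion and \cite[Proposition 3.2]{marques1} for the pointwise identities, the latter coming from the vanishing of the order-$\le 4$ coefficients in the expansion of $\det g_{x_0}$. Your Steps 1 and 2 are precisely a sketch of the content of those cited results; your route to the last identity via Lee--Parker conformal normal coordinates on $\partial M$ together with the Gauss equation is a harmless variation on the direct determinant-trace computation Marques uses.
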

\begin{proof}
For $n \geq 7$, then $d \geq 2$. Notice that $h_{g_{x_0}}=O(|y|^{2d+1})$ near $x_0$ and $\pa M$ is umbilic, it yields $|(\pi_{g_{x_0}})_{ab}|=O(|y|^{2d+1})$ and then $\bar \nabla_{\alpha}\pi_{g_{x_0}}(0)=0$ for all $|\alpha| \leq 4$. From this, the first assertion is a direct consequence of Marques \cite[Lemma 2.3]{marques1} (see also \cite[Lemma 2.3]{Almaraz1}). Since $\det{g_{x_0}}=1+O(|x|^{2d+2})$, then the coefficients in the expansion of  $\det{g_{x_0}}$, which can be derived from \cite[Lemma 2.2]{marques1}, vanish up to order $4$. From this, the second assertion follows by the same lines in \cite[Proposition 3.2]{marques1}.
\end{proof}

Let $W$ and $\overline W$ denote the Weyl tensors of $M$ and $\pa M$ of metric $g_{x_0}$ respectively. We restate Almaraz \cite[Lemma 2.5]{Almaraz1}, which is crucial in the following test function construction. 
\begin{lemma}\label{lem:Almaraz}
Suppose that $\pa M$ is umbilic and $n\geq 4$. Then under $g_{x_0}$-Fermi coordinates around $x_0\in \pa M$, $W_{ijkl}(x_0)=0$ if and only if $R_{nanb}(x_0)=0=\overline W_{abcd}(x_0)$.
\end{lemma}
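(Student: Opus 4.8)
The plan is to evaluate the ambient Weyl tensor $W$ of $g_{x_0}$ at $x_0$ in the Fermi frame, to organize its components according to how many indices equal $n$, and to express each family in terms of the intrinsic Weyl tensor $\overline W$ of $\pa M$ and the ``normal--tangential'' block $P_{ab}:=R_{nanb}(x_0)$. The crucial structural fact is that the umbilicity of $\pa M$ together with the conformal Fermi normalization collapses all the Ricci-type correction terms in the Weyl decomposition, so that each of these three families vanishes precisely when $\overline W(x_0)$ and $P$ vanish.

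First I would record the data at $x_0$. Since $\pa M$ is umbilic, $\mathring\pi_{g_{x_0}}\equiv0$ on $\pa M$, while the conformal Fermi normalization gives $h_{g_{x_0}}=O(|y|^{2d+1})$; as $\pi_{g_{x_0}}=\mathring\pi_{g_{x_0}}+h_{g_{x_0}}\,(g_{x_0})|_{\pa M}$ and $n\geq 4$ (so $2d+1\geq 3$), this yields $\pi_{ab}(x_0)=0$ and $\bar\nabla_c\pi_{ab}(x_0)=0$. Moreover the conformal Fermi coordinates of Marques \cite{marques1} (cf. Proposition \ref{prop:expan_Fermi_metrics} for $n\geq 7$, and the same construction in lower dimensions, using $\det g_{x_0}=1+O(|y|^{2d+2})\subset 1+O(|y|^3)$) give $R_{nn}(x_0)=0$ and $\bar R_{ab}(x_0)=0$, hence $\bar R(x_0)=0$. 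The Gauss and Codazzi equations evaluated at $x_0$ then read $R_{abcd}(x_0)=\bar R_{abcd}(x_0)$ and $R_{nabc}(x_0)=0$, whence (tracing Codazzi) $R_{na}(x_0)=0$, $R_{ab}(x_0)=\bar R_{ab}(x_0)+P_{ab}=P_{ab}$, $R_{nn}(x_0)=\mathrm{tr}\,P$ --- so, comparing with $R_{nn}(x_0)=0$, the symmetric tensor $P$ is trace-free --- and $R(x_0)=\bar R(x_0)+2\,\mathrm{tr}\,P=0$. Finally $\bar R_{abcd}(x_0)=\overline W_{abcd}(x_0)$, since the intrinsic Ricci and scalar curvatures of $\pa M$ vanish at $x_0$; when $n=4$ this reads $0=0$, because $\pa M$ is then three-dimensional with $\overline W\equiv0$ and $\bar R_{abcd}(x_0)=0$, so the condition ``$\overline W_{abcd}(x_0)=0$'' is vacuous in that case.

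Next I would substitute these relations into the algebraic definition
\begin{equation*}
W_{ijkl}=R_{ijkl}-\tfrac{1}{n-2}\bigl(R_{ik}g_{jl}-R_{il}g_{jk}-R_{jk}g_{il}+R_{jl}g_{ik}\bigr)+\tfrac{R}{(n-1)(n-2)}\bigl(g_{ik}g_{jl}-g_{il}g_{jk}\bigr),
\end{equation*}
evaluated at $x_0$ with $g_{x_0}(x_0)=\delta$. Components of $W$ with three or four indices equal to $n$ vanish by antisymmetry; for the remaining families, a direct computation using $R(x_0)=0$, $R_{na}(x_0)=R_{nn}(x_0)=0$ and $R_{ab}(x_0)=P_{ab}$ gives
\begin{equation*}
W_{nabc}(x_0)=0,\qquad W_{anbn}(x_0)=\tfrac{n-3}{n-2}\,P_{ab},
\end{equation*}
\begin{equation*}
W_{abcd}(x_0)=\overline W_{abcd}(x_0)-\tfrac{1}{n-2}\bigl(P_{ac}\delta_{bd}-P_{ad}\delta_{bc}-P_{bc}\delta_{ad}+P_{bd}\delta_{ac}\bigr).
\end{equation*}
If $W(x_0)=0$, the middle identity and $n\geq 4$ force $P=R_{nanb}(x_0)=0$, and then the last identity gives $\overline W_{abcd}(x_0)=W_{abcd}(x_0)=0$. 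Conversely, if $R_{nanb}(x_0)=0$ and $\overline W_{abcd}(x_0)=0$, then $P=0$ and all three displayed families vanish, so $W(x_0)=0$. This proves the asserted equivalence.

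I expect the only genuine work to be the trace bookkeeping in the last two displays --- in particular checking that the conformal Fermi normalization supplies exactly the Ricci cancellations ($\bar R_{ab}(x_0)=0$, $R_{nn}(x_0)=0$, hence $R(x_0)=0$ and $R_{ab}(x_0)=R_{nanb}(x_0)$) that are needed to reduce the generic Weyl decomposition to the clean formulas above; the low dimensions $4\leq n\leq 6$ need nothing beyond the fact that Marques' conformal Fermi construction produces the same vanishing at $x_0$ (and the trivial remark that $\overline W\equiv0$ when $n=4$). This argument follows Almaraz \cite{Almaraz1}.
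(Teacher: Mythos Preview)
Your proposal is correct and is exactly the argument of Almaraz \cite[Lemma~2.5]{Almaraz1}, which the paper simply cites without proof. So there is nothing to compare: you have supplied the details of the result the paper invokes, and your trace bookkeeping (in particular $R_{nn}(x_0)=\bar R_{ab}(x_0)=0$ from the conformal Fermi normalization, hence $R(x_0)=0$, $R_{ab}(x_0)=P_{ab}$ with $\mathrm{tr}\,P=0$, and the three displayed component formulas for $W$) is accurate.
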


\begin{theorem}\label{thm:umbilic_bdry_n=7_c<0}
Suppose that $n= 7$, $\pa M$ is umbilic and the Weyl tensor of $M$ is non-zero at a boundary point, then problem \eqref{eq:main_eq_on_M} is solvable for all non-positive $c$. 
\end{theorem}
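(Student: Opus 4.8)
The plan is to verify the mountain pass condition \eqref{ieq:goal_less} for an explicitly constructed local test function; once this is done, Lemma \ref{lem:compactness} together with the Mountain Pass Lemma yields a nontrivial critical point of $I$, hence a positive solution of \eqref{eq:main_eq_on_M}. Fix a boundary point $x_0$ at which the Weyl tensor is non-zero and work in conformal $g_{x_0}$-Fermi coordinates, in which the metric admits the fourth-order expansion of Proposition \ref{prop:expan_Fermi_metrics}. Since the one-parameter ans\"atze of Marques and Almaraz are too rigid here, I would take $\ubar$ to be the standard bubble $W_\e$ modified by a zeroth-order correction absorbing the leading metric perturbation $\tfrac13\bar R_{acbd}y^cy^d+R_{nanb}(y^n)^2$, together with three one-parameter correction profiles built from $R_{nanb}$ and $\bar R_{acbd}$ with free scalar weights $\kappa_0,\kappa_1,\kappa_2$, all cut off at scale $\rho$ and patched outside $\Omega_\rho$ by a multiple of the Green's function $G_{x_0}$; the vector $\kappa=(\kappa_2,\kappa_1,\kappa_0,1)$ is kept free in order to optimise the $\e^4$ error. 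The estimate \eqref{est:bubble_fcn} shows $\ubar\ge 0$ once $\rho$ is small.

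Next I would expand, to order $\e^4$, the three quantities $\mathcal E=\tfrac{n-2}{4(n-1)}E[\ubar]$, $\mathcal A=n(n-2)\int_M\ubar^{2n/(n-2)}d\mu_{g_{x_0}}$ and $\mathcal B=c\int_{\pa M}\ubar^{2(n-1)/(n-2)}d\sigma_{g_{x_0}}$, using the Fermi normalisations $d\mu_{g_{x_0}}=(1+O(|y|^N))dy$, $h_{g_{x_0}}=O(|y|^{N-1})$ and the pointwise identities at $x_0$ recorded in Proposition \ref{prop:expan_Fermi_metrics}, in particular $R_{nn}=R_{nn,n}=\bar R_{ab}=0$, $-R_{,nn}-2|R_{nanb}|^2=2R_{nanb,ab}$ and $-\bar\Delta R_{g_{x_0}}=\tfrac16|\overline{W}_{g_{x_0}}|_{g_{x_0}}^2$. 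Substituting into the closed form \eqref{eq:qudratic_root_t} for the maximiser $t_\ast$ and then into \eqref{max_I_at_tu}, and carefully tracking the cancellations that make $n=7$ behave like the higher-dimensional regime (no surviving $\e^4\log(\rho/\e)$ term appears), the computation should terminate in an expansion of the form
\[
\max_{t\in[0,\infty)}I[t\ubar]=I[t_\ast\ubar]=S_c+a_n\,\kappa\,\mathcal Q\,\kappa^\top\,|R_{nanb}|^2\e^4-b_n\,|\overline{W}_{g_{x_0}}|_{g_{x_0}}^2\,\e^4+\mathrm{h.o.t.},
\]
where $a_n,b_n>0$, $\mathcal Q$ is an explicit real symmetric $4\times4$ matrix with entries rational in $n$ and $T_c=-c/(n-2)$, and the $\mathrm{h.o.t.}$ are $o(\e^4)$ uniformly once $\rho$ is fixed small.

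With this expansion the proof divides according to Lemma \ref{lem:Almaraz}. If $R_{nanb}(x_0)=0$, then since $W_{g_0}(x_0)\neq0$ that lemma forces $\overline{W}_{abcd}(x_0)\neq0$, and the term $-b_n|\overline{W}_{g_{x_0}}|_{g_{x_0}}^2\e^4<0$ already gives \eqref{ieq:goal_less} for $\e$ small, with any choice of $\kappa$. If $R_{nanb}(x_0)\neq0$, everything reduces to producing $\kappa\in\mathbb R^4$ with last component equal to $1$ such that $\kappa\,\mathcal Q\,\kappa^\top<0$. This is the crux and the main obstacle: $\mathcal Q$ is in general indefinite without being negative definite, so an eigenvector argument fails, and the normalisation of the last coordinate is imposed by the construction and cannot be relaxed. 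The approach I would take is algebraic: factor the quadratic form through two nonsingular matrices, writing $\kappa=-\tfrac{n-2}{2}\,\mathcal V\,\mathcal S_2^\top\mathcal S_1^\top$, so that $\kappa\,\mathcal Q\,\kappa^\top$ becomes a more transparent quadratic expression in the auxiliary vector $\mathcal V$, and then check by direct computation that $\mathcal V=(2/3,\,T_c,\,0,\,1)$ makes it strictly negative; here one uses that $c<0$, hence $T_c>0$, which is precisely what gives room to move the relevant parameter into the negative region (the boundary value $c=0$ is in any case already covered by \cite{escobar4,Brendle-Chen} together with the positive mass theorem of \cite{almaraz-barbosa-lima}, valid for $n=7$). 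Combining the two cases yields \eqref{ieq:goal_less}, and Theorem \ref{thm:umbilic_bdry_n=7_c<0} follows.
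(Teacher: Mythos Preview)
Your proposal follows essentially the paper's approach: a local test function with three free parameters $\kappa_0,\kappa_1,\kappa_2$, the $\e^4$-expansion \eqref{est:error_final_higher_dim}, the dichotomy via Lemma~\ref{lem:Almaraz}, and the algebraic reduction through $\mathcal S_1,\mathcal S_2$ to the vector $\mathcal V=(2/3,T_c,0,1)$. Two points are worth noting. First, the paper's construction is leaner than yours: the correction term is simply
\[
\phi(y)=\bigl[\kappa_2(y^n-T_c\e)^2+\kappa_1\e(y^n-T_c\e)+\kappa_0\e^2\bigr]\e^{\frac{n-2}{2}}R_{nanb}y^ay^b(\e^2+|y-T_c\e\textbf{e}_n|^2)^{-n/2},
\]
with \emph{no} $\bar R_{acbd}$-based correction and \emph{no} Green's function patching outside $\Omega_\rho$; the $|\overline W|^2$ term in the expansion arises not from a correction profile but from the scalar-curvature contribution via the identity $-\bar\Delta R_{g_{x_0}}=\tfrac16|\overline W_{g_{x_0}}|^2_{g_{x_0}}$ in Proposition~\ref{prop:expan_Fermi_metrics}. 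Second, your remark that ``$c<0$, hence $T_c>0$, is precisely what gives room'' is not quite right and you need not peel off $c=0$: the paper computes
\[
\mathcal V\,\overline{\mathcal Q}\,\mathcal V^\top=-\frac{(a-1)^2}{n-3}\frac{T_c^3}{(1+T_c^2)^{(n-3)/2}}+\frac{7a^2-8a+2}{n-3}\,J(2,n-7),
\]
and this is strictly negative for \emph{every} $T_c\ge0$ once $a$ satisfies $7a^2-8a+2<0$ (e.g.\ $a=2/3$), since $J(2,n-7)>0$. The strict negativity therefore comes from the choice of $a$, not from $T_c>0$, and the case $c=0$ is covered by the same computation.
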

\begin{proof}
We choose a test function as
\begin{equation}\label{test_fcn:non-positive_higher_dim}
\ubar=[\chi_\rho(\U+\phi)]\circ \Psi^{-1}_{x_0},
\end{equation}
where the correction term is
$$\phi(y)=[\kappa_2(y^n-T_c\e)^2+\kappa_1 \e (y^n-\e T_c)+\kappa_0 \e^2)]\e^{\frac{n-2}{2}}R_{nanb}y^ay^b  (\e^2+|y-T_c \e \textbf{e}_n|^2)^{-\frac{n}{2}}$$
and $\kappa_0,\kappa_1,\kappa_2$ are constants only depending on $n,c$ to be determined later. Note that for $c \in \mathbb{R}$, $(\U+\phi)(y)=(1+O(|y|^2+\e^2))\U(y)$ in $B_{2\rho}^+$. Then for sufficiently small $\rho>0$, there holds
$$\frac{1}{2}\U(y)\leq \ubar(\Psi_{x_0}(y)) \leq 2 \U(y)\quad \mathrm{in~~} B_{2\rho}^+.$$
We remark that our test function includes the one used by Almaraz \cite{Almaraz1}, both somewhat inspired by the idea of Marques \cite{marques1}.

 In the following, we will use an elementary identity (see e.g., \cite[P. 390]{marques3}): Let $q$ be a homogeneous polynomial of degree $k$ and $r>0$, then
\begin{equation}\label{est:homo_polynomial}
\int_{S^{n-2}_r} q d\sigma=\frac{r^2}{k(n+k-3)}\int_{S_r^{n-2}}\Delta q d\sigma.
\end{equation}
Using $\Delta^2(R_{nanb}R_{ncnd}y^ay^by^cy^d)=16(R_{nanb})^2$ and \eqref{est:homo_polynomial}, we have
\begin{equation}\label{est:symm_Rm_on_ball}
\int_{S^{n-2}_r}R_{nanb}R_{ncnd}y^ay^by^cy^d d\sigma=\frac{2\omega_{n-2}(R_{nanb})^2}{(n-1)(n+1)}r^{n+2},
\end{equation}
where $\omega_{n-2}$ is the volume of the standard unit sphere $S^{n-2}$.
Recall that
$$\int_0^\infty\frac{x^{\alpha-1}}{(1+x)^{\alpha+\beta}}dx=B(\alpha,\beta)=\frac{\Gamma(\alpha)\Gamma(\beta)}{\Gamma(\alpha+\beta)}$$
for ${\rm Re}(\alpha)>0, {\rm Re}(\beta)>0$. 

A similar argument in Section \ref{Sect4} gives
$$E[\ubar;M\backslash \Omega_\rho]\leq C\e^{n-2}\rho^{2-n}.$$

We turn to estimate $E[\ubar;\Omega_\rho]$. Notice that $R_{nn}=0$ at $x_0$ by Proposition \ref{prop:expan_Fermi_metrics} and then
\begin{equation}\label{est:elementary_high_dim}
\int_{S_r^{n-2}}R_{nanb}y^ay^b d\sigma=\frac{R_{nn}}{n-1}\omega_{n-2}r^{n}=0.
\end{equation}
Since $n= 7$, we obtain
\begin{align*}
&\int_{\Omega_\rho}\ubar^{\frac{2n}{n-2}}d\mu_{g_{x_0}}\\
=&\int_{B_\rho^+}(\U+\phi)^{\frac{2n}{n-2}}dy+O(\e^n\rho^{N-n})\\
=&\int_{B_\rho^+}\U^{\frac{2n}{n-2}}dy+\frac{2n}{n-2}\int_{B_\rho^+}\U^{\frac{n+2}{n-2}}\phi dy+\frac{n(n+2)}{(n-2)^2}\int_{B_\rho^+}\U^{\frac{4}{n-2}}\phi^2dy+O(\e^6)\\
=&\int_{B_\rho^+}\U^{\frac{2n}{n-2}}dy+J+O(\e^6),
\end{align*}
where 
\begin{align*}
J=&\frac{n(n+2)}{(n-2)^2}\int_{\mathbb{R}^n_+}\U^\frac{4}{n-2}\phi^2 dy\\
=&\frac{n(n+2)}{(n-2)^2}\e^4 R_{nanb}R_{ncnd}\int_{\mathbb{R}^n_+}\frac{y^ay^by^cy^d[\kappa_2(y^n-T_c)^2+\kappa_1  (y^n-T_c)+\kappa_0 ]^2}{(1+|y-T_c\textbf{e}_n|^2)^{n+2}} dy\\
=&\frac{2n(n+2)}{(n-2)^2(n^2-1)}(R_{nanb})^2 \e^4\int_{\mathbb{R}^n_+}\frac{|\bar y|^4[\kappa_2(y^n-T_c)^2+\kappa_1 (y^n-T_c)+\kappa_0]^2}{(1+|y-T_c\textbf{e}_n|^2)^{n+2}}dy\\
=&\frac{n(n+2)\omega_{n-2}\e^4}{(n-2)^2(n^2-1)}B(\tfrac{n+3}{2},\tfrac{n+1}{2})(R_{nanb})^2\\
&\cdot \int_0^\infty \frac{[\kappa_2(y^n-T_c)^2+\kappa_1  (y^n-T_c)+\kappa_0]^2}{(1+(y^n-T_c)^2)^{\frac{n+1}{2}}}dy^n
\end{align*}
by using \eqref{est:symm_Rm_on_ball} and
\begin{align*}
\frac{2n}{n-2}\int_{B_\rho^+}\U^{\frac{n+2}{n-2}}\phi dy=0
\end{align*}
by virtue of \eqref{est:elementary_high_dim}.
 Similarly we obtain
\begin{align*}
&\int_{\Omega_\rho \cap \partial M}\bar{U}_{(x_0,\e)}^{\frac{2(n-1)}{n-2}}d\sigma_{g_{x_0}}\\
=&\int_{D_\rho}(\U+\phi)^{\frac{2(n-1)}{n-2}}d\sigma+O(\e^{n-1}\rho^{N-n+1})\\
=&\int_{D_\rho}\U^{\frac{2(n-1)}{n-2}}d\sigma+\frac{2(n-1)}{n-2}\int_{D_\rho}\U^{\frac{n}{n-2}}\phi d\sigma\\
&+\frac{n(n-1)}{(n-2)^2}\int_{D_\rho}\U^{\frac{2}{n-2}}\phi^2 d\sigma+O(\e^{6}\log \frac{\rho}{\e})\\
=&\int_{D_\rho}\U^{\frac{2(n-1)}{n-2}}d\sigma+\hat B+O(\e^{6}\log \frac{\rho}{\e}),
\end{align*}
where
\begin{align*}
\hat B&=\frac{n(n-1)}{(n-2)^2}\int_{\R^{n-1}}\U^{\frac{2}{n-2}}\phi^2 d\sigma\\
&=\frac{n(n-1)}{(n-2)^2}\e^4R_{nanb}R_{ncnd}\int_{\R^{n-1}}\frac{y^a y^b y^c y^d(\kappa_2T_c^2-\kappa_1 T_c+\kappa_0)^2}{(1+T_c^2+\bar{y}^2)^{n+1}} d\sigma\\
&=\frac{2n}{(n-2)^2(n+1)}\e^4(R_{nanb})^2\int_{\R^{n-1}}\frac{|\bar{y}|^4(\kappa_2T_c^2-\kappa_1 T_c+\kappa_0)^2}{(1+T_c^2+\bar{y}^2)^{n+1}} d\sigma\\
&=\frac{n\omega_{n-2}}{(n-2)^2(n+1)}B\lp\tfrac{n+3}{2},\tfrac{n-1}{2}\rp\e^4(R_{nanb})^2\frac{(\kappa_2T_c^2-\kappa_1 T_c+\kappa_0)^2}{(1+T_c^2)^{\frac{n-1}{2}}}
\end{align*}
and
$$\frac{2(n-1)}{n-2}\int_{D_\rho}\U^{\frac{n}{n-2}}\phi d\sigma=0$$
by virtue of \eqref{est:elementary_high_dim}.
Observe that
\begin{align*}
&\int_{\Omega_\rho}|\nabla \ubar|_{g_{x_0}}^2d\mu_{g_{x_0}}=\int_{B_\rho^+}|\nabla(\U+\phi)|_{g_{x_0}}^2dy+O(\e^{n-2}\rho^{N+2-n}) \\
=&\int_{B_\rho^+}|\nabla (\U+\phi)|^2dy+\int_{B^+_\rho}(g_{x_0}^{ab}-\delta_{ab})\partial_a(\U+\phi)\partial_b(\U+\phi) dy\\
&+O(\e^{n-2}\rho^{N+2-n}).
\end{align*}
By \eqref{prob:half-space}, an integration by parts gives
\begin{align*}
&\frac{8(n-1)}{n-2}\int_{B_\rho^+}\langle\nabla \U,\nabla \phi\rangle dy \\
=&-\frac{8(n-1)}{n-2}\int_{B_\rho^+}\Delta \U \phi dy-\frac{8(n-1)}{n-2}\int_{D_\rho}\phi \frac{\pa \U}{\pa y^n} d\sigma\\
&+\frac{8(n-1)}{n-2}\int_{\partial^+ B_\rho^+}\phi \frac{\partial \U}{\partial r} d\sigma \\
=&O(\e^{n-2}\rho^{4-n}).
\end{align*}
From this we obtain
\begin{align*}
&\int_{B_\rho^+}|\nabla (\U+\phi)|^2dy\\
=&\int_{B_\rho^+}|\nabla \U|^2 dy+\int_{B_\rho^+}|\nabla \phi|^2 dy+O(\e^{n-2}\rho^{4-n}).
\end{align*}
Direct computations give
\begin{align*}
&|\nabla \phi|^2\\
=&\Big[4R_{nena}R_{nenb}y^a y^bD^2(\e^2+|y-T_c\e \textbf{e}_n|^2)^{-n}\\
&~~+n(n-4)R_{nanb}R_{ncnd}y^a y^b y^c y^dD^2(\e^2+|y-T_c\e \textbf{e}_n|^2)^{-n-1}\\
&~~+R_{nanb}R_{ncnd}y^a y^b y^c y^d(2\kappa_2(y^n-T_c\e)+\kappa_1\e)^2 (\e^2+|y-T_c\e \textbf{e}_n|^2)^{-n}\\
&~~-2n R_{nanb}R_{ncnd}y^a y^b y^c y^d (2\kappa_2(y^n-T_c\e)+\kappa_1\e)\\
&\quad~ \cdot D(y^n-\e T_c)(\e^2+|y-T_c\e \textbf{e}_n|^2)^{-n-1}\\
&~~-n^2\e^2R_{nanb}R_{ncnd}y^a y^b y^c y^dD^2(\e^2+|y-T_c\e \textbf{e}_n|^2)^{-n-2}\Big]\e^{n-2},
\end{align*}
where $D=D(y^n)=\kappa_2(y^n-T_c\e)^2+\kappa_1 \e (y^n-\e T_c)+\kappa_0 \e^2$.

From this, \eqref{est:homo_polynomial} and \eqref{est:symm_Rm_on_ball}, we obtain
\begin{align*}
&\int_{B_\rho^+}|\nabla \phi|^2 dy\\
=&\left\{4R_{nena}R_{nenb}\int_{B_\rho^+}\frac{y^a y^bD(y)^2}{(\e^2+|y-T_c\e \textbf{e}_n|^2)^{n}}dy\right.\\
&\quad+n(n-4)R_{nanb}R_{ncnd}\int_{B_\rho^+}\frac{y^a y^b y^c y^dD(y)^2}{(\e^2+|y-T_c\e \textbf{e}_n|^2)^{n+1}}dy\\
&\quad+R_{nanb}R_{ncnd}\int_{B_\rho^+}\frac{y^a y^b y^c y^d(2\kappa_2(y^n-T_c\e)+\kappa_1\e)^2}{(\e^2+|y-T_c\e \textbf{e}_n|^2)^{n}}dy\\
&\quad-2n R_{nanb}R_{ncnd}\int_{B_\rho^+}\frac{y^a y^b y^c y^d(2\kappa_2(y^n-T_c\e)+\kappa_1\e)D(y)(y^n-\e T_c)}{(\e^2+|y-T_c\e \textbf{e}_n|^2)^{n+1}}dy\\
&\quad\left.-n^2\e^2R_{nanb}R_{ncnd}\int_{B_\rho^+}\frac{y^a y^b y^c y^dD(y)^2}{(\e^2+|y-T_c\e \textbf{e}_n|^2)^{n+2}}dy\right\}\e^{n-2}\\
=&\left\{\frac{4}{n-1}\int_{B_{\rho/\e}^+}\frac{|\bar{y}|^2[\kappa_2(y^n-T_c)^2+\kappa_1(y^n-T_c)+\kappa_0]^2}{(1+|y-T_c\textbf{e}_n|^2)^{n}}dy\right.\\
&\quad+\frac{2n(n-4)}{n^2-1}\int_{B_{\rho/\e}^+}\frac{|\bar{y}|^4[\kappa_2(y^n-T_c)^2+\kappa_1(y^n-T_c)+\kappa_0]^2}{(1+|y-T_c\textbf{e}_n|^2)^{n+1}}dy \\
&\quad+\frac{2}{n^2-1}\int_{B_{\rho/\e}^+}\frac{|\bar{y}|^4[2\kappa_2(y^n-T_c)+\kappa_1]^2}{(1+|y-T_c\textbf{e}_n|^2)^{n}}dy \\
&\quad-\tfrac{4n}{n^2-1}\int_{B_{\rho/\e}^+}\tfrac{|\bar{y}|^4[2\kappa_2(y^n-T_c)+\kappa_1](y^n-T_c)[\kappa_2(y^n-T_c)^2+\kappa_1(y^n-T_c)+\kappa_0]}{(1+|y-T_c\textbf{e}_n|^2)^{n+1}} dy\\
&\quad\left.-\frac{2n^2}{n^2-1}\int_{B_{\rho/\e}^+}\frac{|\bar{y}|^4[\kappa_2(y^n-T_c)^2+\kappa_1(y^n-T_c)+\kappa_0]^2}{(1+|y-T_c\textbf{e}_n|^2)^{n+2}}dy\right\}\e^4(R_{nanb})^2.
\end{align*}
Notice that
\begin{align*}
&\int_{B_{\rho/\e}^+}\frac{|\bar y|^4|y^n|^k}{(1+|y-T_c\textbf{e}_n|^2)^n}dy\\
=&\frac{\omega_{n-2}}{2}B(\tfrac{n+3}{2},\tfrac{n-3}{2})\int_0^\infty \frac{|y^n|^k}{(1+(y^n-T_c)^2)^{\frac{n-3}{2}}}dy^n+O(\rho^{6-n}\e^{n-6}), \quad 0\leq k\leq 2,\\
&\int_{B_{\rho/\e}^+}\frac{|\bar y|^2|y^n|^k}{(1+|y-T_c\textbf{e}_n|^2)^n}dy\\
=&\frac{\omega_{n-2}}{2}B(\tfrac{n+1}{2},\tfrac{n-1}{2})\int_0^\infty \frac{|y^n|^k}{(1+(y^n-T_c)^2)^{\frac{n-1}{2}}}dy^n+O(\rho^{6-n}\e^{n-6}),\quad 0\leq k \leq 4,\\
&\int_{B_{\rho/\e}^+}\frac{|\bar y|^4|y^n|^k}{(1+|y-T_c\textbf{e}_n|^2)^{n+1}}dy\\
=&\frac{\omega_{n-2}}{2}B(\tfrac{n+3}{2},\tfrac{n-1}{2})\int_0^{\infty} \frac{|y^n|^k}{(1+(y^n-T_c)^2)^{\frac{n-1}{2}}}dy^n
+O(\rho^{6-n}\e^{n-6}),\quad 0\leq k \leq 4,\\
&\int_{B_{\rho/\e}^+}\frac{|\bar y|^4|y^n|^k}{(1+|y-T_c\textbf{e}_n|^2)^{n+2}}dy\\
=&\frac{\omega_{n-2}}{2}B(\tfrac{n+3}{2},\tfrac{n+1}{2})\int_0^\infty \frac{|y^n|^k}{(1+(y^n-T_c)^2)^{\frac{n+1}{2}}}dy^n+O(\rho^{4-n}\e^{n-4}), \quad 0\leq k \leq 4.\\
&B(\tfrac{n+1}{2},\tfrac{n-1}{2})=\frac{2n}{n+1}B(\tfrac{n+3}{2},\tfrac{n-1}{2}).
\end{align*}
Therefore, putting these facts together, we conclude that
\begin{align*}
&\int_{B_\rho^+}|\nabla (\U+\phi)|^2dy\\
=&\int_{\mathbb{R}^n_+}|\nabla \U|^2 dy+O(\rho^{4-n}\e^{n-2})\\
&+\left\{n^2B(\tfrac{n+3}{2},\tfrac{n-1}{2})\int_0^\infty \frac{[\kappa_2(y^n-T_c)^2+\kappa_1(y^n-T_c)+\kappa_0]^2}{(1+(y^n-T_c)^2)^{\frac{n-1}{2}}}dy^n\right.\\
&\quad~~+B(\tfrac{n+3}{2},\tfrac{n-3}{2})\int_0^\infty \frac{[2\kappa_2(y^n-T_c)+\kappa_1]^2}{(1+(y^n-T_c)^2)^{\frac{n-3}{2}}} dy^n\\
&\quad~~-2nB(\tfrac{n+3}{2},\tfrac{n-1}{2})\int_0^\infty\tfrac{[2\kappa_2(y^n-T_c)+\kappa_1][\kappa_2(y^n-T_c)^2+\kappa_1(y^n-T_c)+\kappa_0](y^n-T_c)}{(1+(y^n-T_c)^2)^{\frac{n-1}{2}}} dy^n \\
&\quad~~\left.-n^2B(\tfrac{n+3}{2},\tfrac{n+1}{2})\int_0^\infty \tfrac{[\kappa_2(y^n-T_c)^2+\kappa_1(y^n-T_c)+\kappa_0]^2}{(1+(y^n-T_c)^2)^{\frac{n+1}{2}}}dy^n \right\}\tfrac{\omega_{n-2}}{n^2-1}(R_{nanb})^2\e^4.
\end{align*}

Next we need to estimate
\begin{align*}
&\int_{B^+_\rho}(g_{x_0}^{ab}-\delta_{ab})\partial_a(\U+\phi)\partial_b(\U+\phi) dy\\
=&\int_{B^+_\rho}(g_{x_0}^{ab}-\delta_{ab})\partial_a\U\partial_b\U dy+2\int_{B^+_\rho}(g_{x_0}^{ab}-\delta_{ab})\partial_a\U\partial_b\phi dy\\
&+\int_{B^+_\rho}(g_{x_0}^{ab}-\delta_{ab})\partial_a\phi\partial_b\phi dy.
\end{align*}
By the symmetry of the half-ball, \eqref{est:homo_polynomial}, \eqref{est:elementary_high_dim} and Proposition \ref{prop:expan_Fermi_metrics}, we estimate the first term (see also \cite[Lemma A.1]{Almaraz1})
 \begin{align*}
 &\int_{B^+_\rho}(g_{x_0}^{ab}-\delta_{ab})\partial_a\U\partial_b\U dy\\
 =&\frac{(n-2)^2}{n^2-1}\e^4R_{nanb,ab}\int_{B_{\rho/\e}^+}\frac{|y^n|^2|\bar y|^4}{(1+|y-T_c\textbf{e}_n|^2)^n}dy\\
 &+\frac{(n-2)^2}{2(n-1)}\e^4(R_{nanb})^2\int_{B_{\rho/\e}^+}\frac{|y^n|^4|\bar y|^2}{(1+|y-T_c\textbf{e}_n|^2)^n}dy\\
 &+O(\e^5) \int_{B_{\rho/\e}^+}\frac{|y|^7}{(1+|y-T_c\textbf{e}_n|^2)^n}dy\\
 :=&\frac{(n-2)^2}{n^2-1}\e^4R_{nanb,ab}\Theta_1+\frac{(n-2)^2}{2(n-1)}\e^4(R_{nanb})^2\Theta_2+O(\e^5\log \frac{\rho}{\e}),
 \end{align*}
 where 
\begin{align*}
\Theta_1=&\int_{\mathbb{R}^n_+}\frac{|y^n|^2|\bar y|^4}{(1+|y-T_c\textbf{e}_n|^2)^n}dy\\
=&\frac{\omega_{n-2}}{2}B(\tfrac{n+3}{2},\tfrac{n-3}{2})\int_0^\infty \frac{|y^n|^2}{(1+(y^n-T_c)^2)^{\frac{n-3}{2}}}dy^n,\\
\Theta_2=&\int_{\mathbb{R}^n_+}\frac{|y^n|^4|\bar y|^2}{(1+|y-T_c\textbf{e}_n|^2)^n}dy\\
=&\frac{\omega_{n-2}}{2}B(\tfrac{n+1}{2},\tfrac{n-1}{2})\int_0^\infty \frac{(y^n)^4}{(1+(y^n-T_c)^2)^{\frac{n-1}{2}}}dy^n.
\end{align*}
For the second term, by Proposition \ref{prop:expan_Fermi_metrics} direct computations give
 \begin{align*}
&2\int_{B^+_\rho}(g_{x_0}^{ab}-\delta_{ab})\partial_a\U\partial_b\phi dy \\
=&2\int_{B^+_\rho}(g_{x_0}^{ab}-\delta_{ab})\frac{(2-n)y^a\e^{\frac{n-2}{2}}}{(\e^2+|y^n-T_c\e\textbf{e}_n|^2)^{\frac{n}{2}}}\\
&\quad~~\cdot\left\{\frac{2\e^{\frac{n-2}{2}}R_{nenb}y^e[\kappa_2(y^n-T_c\e)^2+\kappa_1(y^n-T_c\e)\e+\kappa_0\e^2]}{(\e^2+|y^n-T_c\e\textbf{e}_n|^2)^{\frac{n}{2}}}\right.\\
&\left.\qquad~~-\frac{nR_{nenf}y^e y^f y^b\e^{\frac{n-2}{2}}[\kappa_2(y^n-T_c\e)^2+\kappa_1(y^n-T_c\e)\e+\kappa_0\e^2]}{(\e^2+|y^n-T_c\e\textbf{e}_n|^2)^{\frac{n}{2}+1}}\right\}dy\\
=&2\int_{\R_+^n}R_{nanb}(y^n)^2\frac{(2-n)y^a\e^{\frac{n-2}{2}}}{(\e^2+|y^n-T_c\e\textbf{e}_n|^2)^{\frac{n}{2}}}\\
&\quad~~\cdot\left\{\frac{2\e^{\frac{n-2}{2}}R_{nenb}y^e[\kappa_2(y^n-T_c\e)^2+\kappa_1(y^n-T_c\e)\e+\kappa_0\e^2]}{(\e^2+|y^n-T_c\e\textbf{e}_n|^2)^{\frac{n}{2}}}\right.\\
&\left.\qquad~-\frac{nR_{nenf}y^e y^f y^b\e^{\frac{n-2}{2}}[\kappa_2(y^n-T_c\e)^2+\kappa_1(y^n-T_c\e)\e+\kappa_0\e^2]}{(\e^2+|y^n-T_c\e\textbf{e}_n|^2)^{\frac{n}{2}+1}}\right\}dy\\
&+O(\e^{5} \log \frac{\rho}{\e})\\
=&-\frac{2n(n-2)}{n^2-1}\e^4\omega_{n-2}(R_{nanb})^2B(\tfrac{n+3}{2},\tfrac{n-1}{2})\\
&\cdot \int_0^\infty \frac{(y^n)^2[\kappa_2(y^n-T_c)^2+\kappa_1(y^n-T_c)+\kappa_0]}{(1+(y^n-T_c)^2)^{\frac{n-1}{2}}}dy^n+O(\e^{5} \log \frac{\rho}{\e}).
 \end{align*}
 The third term can be estimated by
 \begin{align*}
 \int_{B_\rho^+}(g_{x_0}^{ab}-\delta_{ab})\partial_a\phi \pa_b\phi dy=O(\rho^{8-n}\e^{n-2}).
 \end{align*}
 Thus, we obtain
 \begin{align*}
 &\int_{B^+_\rho}(g_{x_0}^{ab}-\delta_{ab})\partial_a(\U+\phi)\partial_b(\U+\phi) dy\\
 =&\frac{(n-2)^2}{n^2-1}R_{nanb,ab}\Theta_1\e^4+O(\e^5 \log \frac{\rho}{\e})\\
&+\frac{\omega_{n-2}}{n^2-1}\e^4\left\{\tfrac{(n-2)^2(n+1)}{4}(R_{nanb})^2B(\tfrac{n+1}{2},\tfrac{n-1}{2})\int_0^\infty \tfrac{(y^n)^4}{(1+(y^n-T_c)^2)^{\frac{n-1}{2}}}dy^n\right.\\
&\left.-2n(n-2)B(\tfrac{n+3}{2},\tfrac{n-1}{2})(R_{nanb})^2\int_0^\infty\tfrac{(y^n)^2[\kappa_2(y^n-T_c)^2+\kappa_1(y^n-T_c)+\kappa_0]}{(1+(y^n-T_c)^2)^{\frac{n-1}{2}}}dy^n\right\}.
 \end{align*}
  
 We also estimate
 \begin{align*}
&\int_{\Omega_\rho}R_{g_{x_0}}\ubar^2d\mu_{g_{x_0}}\\
=&\int_{B_\rho^+}R_{g_{x_0}}\U^2 dy+2\int_{B_\rho^+}R_{g_{x_0}}\U\phi dy+\int_{B_\rho^+}R_{g_{x_0}}\phi^2dy+O(\e^{n-2}\rho^{N+6-n}).
\end{align*}
Notice that $-\bar \Delta R_{g_{x_0}}=\frac{1}{6}|\overline{W}_{g_{x_0}}|_{g_{x_0}}^2$ at $x_0$ and $R_{g_{x_0}}=O(|y|^2)$ by Proposition \ref{prop:expan_Fermi_metrics}, it yields
\begin{align*}
\int_{B_\rho^+}R_{g_{x_0}}\U^2 dy=&\frac{1}{2}\e^4R_{,nn}\int_{B_{\rho/\e}^+}\frac{|y^n|^2}{(1+|y-T_c\textbf{e}_n|^2)^{n-2}}dy\\
&-\frac{1}{12(n-1)}\e^4|\overline{W}_{g_{x_0}}|_{g_{x_0}}^2\int_{B_{\rho/\e}^+}\frac{|\bar y|^2}{(1+|y-T_c\textbf{e}_n|^2)^{n-2}}dy\\
&+O(\e^5)\int_{B_{\rho/\e}^+}\frac{|y|^3}{(1+|y-T_c\textbf{e}_n|^2)^{n-2}}dy\\
=&\frac{1}{2}\e^4R_{,nn}\Theta_3-\frac{1}{12(n-1)}\e^4|\overline{W}_{g_{x_0}}|_{g_{x_0}}^2\Theta_4+O(\e^5\log \frac{\rho}{\e}),\\
\int_{B_\rho^+}R_{g_{x_0}}\U\phi dy=&\int_{B_\rho^+}O\Big(|y|^2(\e^2+ |y|^2)\Big)\U^2 dy=O(\e^5\rho),\\
\int_{B_\rho^+}R_{g_{x_0}}\phi^2dy=&\int_{B_\rho^+}O\Big(|y|^2(\e^2+|y|^2)^2\Big)\U^2 dy=O(\e^5\rho^3),
\end{align*}
where 
\begin{align*}
\Theta_3=&\int_{\mathbb{R}^n_+}\frac{|y^n|^2}{(1+|y-T_c\textbf{e}_n|^2)^{n-2}}dy\\
=&\frac{\omega_{n-2}}{2}B(\tfrac{n-1}{2},\tfrac{n-3}{2})\int_0^\infty \frac{(y^n)^2}{(1+(y^n-T_c)^2)^{\frac{n-3}{2}}}dy^n,\\
\Theta_4=&\int_{\mathbb{R}^n_+}\frac{|\bar y|^2}{(1+|y-T_c\textbf{e}_n|^2)^{n-2}}dy\\
=&\frac{\omega_{n-2}}{2}B(\tfrac{n+1}{2},\tfrac{n-5}{2})\int_0^\infty \frac{1}{(1+(y^n-T_c)^2)^{\frac{n-5}{2}}}dy^n.
\end{align*}
Also we have
\begin{align*}
\int_{\Omega_\rho\cap \pa M} h_{g_{x_0}}\ubar^2 d\sigma_{g_{x_0}}=&\int_{D_{\rho/\e}}\frac{O(\e^{N}) |y|^{N-1}}{(1+|y-T_c\textbf{e}_n|^2)^{n-2}} d\sigma=O(\e^{n-2}\rho^{N+2-n}).
\end{align*}
Notice  that
$$\Theta_1=\frac{n+1}{4(n-2)}\Theta_3.$$
and $-R_{,nn}-2(R_{nanb})^2=2R_{nanb,ab}$ at $x_0$ by Proposition \ref{prop:expan_Fermi_metrics}, it yields
$$\frac{4(n-2)}{n+1}R_{nanb,ab}\Theta_1+\frac{1}{2}R_{,nn}\Theta_3=-(R_{nanb})^2\Theta_3.$$

Based on the above estimates, we conclude from \eqref{eq:qudratic_root_t} that
$$t_\ast=1+K_\ast \e^4+O(\e^5 \log \frac{\rho}{\e}),$$
where $K_\ast$ is a constant depending on $n,T_c,M,g_{x_0}$. Then we claim that
\begin{align*}
&I_\ast[\ubar]\\
:=&(t_\ast^2-1)E[\ubar]-4(n-1)(n-2)(t_\ast^{\frac{2n}{n-2}}-1)\int_M \ubar^{\frac{2n}{n-2}}d\mu_{g_{x_0}}\\
&+4(n-2)T_c (t_\ast^{\frac{2(n-1)}{n-2}}-1)\int_{\pa M}\ubar^{\frac{2(n-1)}{n-2}}d\sigma_{g_{x_0}}\\
=&\left[\int_{\mathbb{R}_+^n}|\nabla \U|^2 dy -n(n-2)\int_{\mathbb{R}_+^n}\U^{\frac{2n}{n-2}}dy\right.\\
&~~\left.+(n-2)T_c \int_{\mathbb{R}^{n-1}}\U^{\frac{2(n-1)}{n-2}}d\sigma \right]\frac{8(n-1)}{n-2}K_\ast\e^4+O(\e^5 \log \frac{\rho}{\e})\\
=&O(\e^5 \log \frac{\rho}{\e}),
\end{align*}
where the last identity follows from \eqref{energy:bubble2}.

For convenience, noticing that $T_c \geq 0$, we define
\begin{align*}
J(k,l)=&\int_{-\arctan T_c}^{\frac{\pi}{2}}\sin^k\theta\cos^l\theta d\theta,\quad k,l\in\mathbb{N},
\end{align*}
then the iteration formulae of $J(k,l)$ are given by
\begin{align}
J(k+2,l)=&\frac{(-1)^{k+1}T_c^{k+1}}{(l+k+2)(1+T_c^2)^{\frac{k+l+2}{2}}}+\frac{k+1}{l+k+2}J(k,l), \quad k,l\in\mathbb{N},\label{iteration_J1}\\
J(k,l+2)=&\frac{(-1)^{k}T_c^{k+1}}{(l+k+2)(1+T_c^2)^{\frac{k+l+2}{2}}}+\frac{l+1}{l+k+2}J(k,l), \quad k,l\in\mathbb{N}.
\label{iteration_J2}
\end{align}
Under the change of variables $\tan \theta=z-T_c$, we obtain
\begin{align*}
I_0(k)=&\int_0^\infty \frac{(z-T_c)^k}{(1+(z-T_c)^2)^{\frac{n-3}{2}}} dz=J(k,n-5-k), \quad 0\leq k\leq 2,\\
I_1(k)=&\int_0^\infty \frac{(z-T_c)^k}{(1+(z-T_c)^2)^{\frac{n-1}{2}}} dz=J(k,n-3-k), \quad 0\leq k\leq 4,\\
I_2(k)=&\int_0^\infty \frac{(z-T_c)^k}{(1+(z-T_c)^2)^{\frac{n+1}{2}}} dz=J(k,n-1-k), \quad 0\leq k\leq 4.
\end{align*}
Therefore, putting these facts together, together with \eqref{eq:S_c_n=4} we conclude that
\begin{align}
&I[t_\ast \ubar]\no\\
=&I[\ubar]+I_\ast[\ubar]\no\\
=&\frac{4(n-1)}{(n-2)}\left\{\int_{\mathbb{R}^n_+}|\nabla \U|^2 dy+O(\e^5\log(\tfrac{\rho}{\e})) \right.\no\\
&+\left[n^2B(\tfrac{n+3}{2},\tfrac{n-1}{2})\int_0^\infty \frac{(\kappa_2(y^n-T_c)^2+\kappa_1(y^n-T_c)+\kappa_0)^2}{(1+(y^n-T_c)^2)^{\frac{n-1}{2}}}dy^n\right.\no\\
&\quad+B(\tfrac{n+3}{2},\tfrac{n-3}{2})\int_0^\infty \frac{(2\kappa_2(y^n-T_c)+\kappa_1)^2}{(1+(y^n-T_c)^2)^{\frac{n-3}{2}}} dy^n\no\\
&\quad-2nB(\tfrac{n+3}{2},\tfrac{n-1}{2})\int_0^\infty\tfrac{(2\kappa_2(y^n-T_c)+\kappa_1)(\kappa_2(y^n-T_c)^2+\kappa_1(y^n-T_c)+\kappa_0)(y^n-T_c)}{(1+(y^n-T_c)^2)^{\frac{n-1}{2}}} dy^n \no\\
&\left.\quad\left.-n^2B(\tfrac{n+3}{2},\tfrac{n+1}{2})\int_0^\infty \tfrac{(\kappa_2(y^n-T_c)^2+\kappa_1(y^n-T_c)+\kappa_0)^2}{(1+(y^n-T_c)^2)^{\frac{n+1}{2}}}dy^n \right]\frac{\omega_{n-2}}{n^2-1}(R_{nanb})^2\e^4\right.\no\\
&+\frac{\omega_{n-2}}{n^2-1}\e^4\left[\frac{(n-2)^2(n+1)}{4}(R_{nanb})^2B(\tfrac{n+1}{2},\tfrac{n-1}{2})\int_0^\infty \tfrac{(y^n)^4}{(1+(y^n-T_c)^2)^{\frac{n-1}{2}}}dy^n\right.\no\\
&\left.\left.-2n(n-2)B(\tfrac{n+3}{2},\tfrac{n-1}{2})(R_{nanb})^2\int_0^\infty\tfrac{(y^n)^2(\kappa_2(y^n-T_c)^2+\kappa_1(y^n-T_c)+\kappa_0)}{(1+(y^n-T_c)^2)^{\frac{n-1}{2}}}dy^n\right] \right\}\no\\
&-\e^4(R_{nanb})^2\frac{\omega_{n-2}}{2}B(\tfrac{n-1}{2},\tfrac{n-3}{2})\int_0^\infty \frac{(y^n)^2}{(1+(y^n-T_c)^2)^{\frac{n-3}{2}}}dy^n \no\\
&-\frac{1}{12(n-1)}\e^4|\overline{W}_{g_{x_0}}|_{g_{x_0}}^2\Theta_4+O(\e^5\log(\tfrac{\rho}{\e})) -4(n-1)(n-2)\left[\int_{B_\rho^+}\U^{\frac{2n}{n-2}}dy\right.\no\\
&+\left.\tfrac{n(n+2)\omega_{n-2}}{(n-2)^2(n^2-1)}B(\tfrac{n+3}{2},\tfrac{n+1}{2})(R_{nanb})^2\e^4\int_0^\infty \tfrac{(\kappa_2(y^n-T_c)^2+\kappa_1(y^n-T_c)+\kappa_0)^2}{(1+(y^n-T_c)^2)^{\frac{n+1}{2}}}dy^n\right]\no\\
&+4(n-2)T_c\left[\int_{D_\rho}\U^{\frac{2(n-1)}{n-2}}d\sigma+\tfrac{n\omega_{n-2}B\lp\tfrac{n+3}{2},\tfrac{n-1}{2}\rp(\kappa_2 T_c^2-\kappa_1 T_c+\kappa_0)^2\e^4(R_{nanb})^2}{(n-2)^2(n+1)(1+T_c^2)^{\frac{n-1}{2}}}\right]\no \\
=& S_c+\omega_{n-2}B(\tfrac{n+3}{2},\tfrac{n-1}{2})\kappa\mathcal{Q}\kappa^{\top}\e^4(R_{nanb})^2-\tfrac{\Theta_4}{12(n-1)}|\overline{W}_{g_{x_0}}|_{g_{x_0}}^2\e^4+O(\e^5\log(\tfrac{\rho}{\e})) , \label{est:error_final_higher_dim}
\end{align}
where
\begin{align*}
\kappa=&(\kappa_2,\kappa_1,\kappa_0,1), \quad\mathcal{Q}=(\mathcal{Q}_{ij})_{1\leq i,j \leq 4}, \quad \mathcal{Q}_{ij}=\mathcal{Q}_{ji},\\
\mathcal{Q}_{11}=&\tfrac{4n}{(n-2)(n+1)}\left[(n-4)I_1(4)-(n-1)I_2(4)+\frac{8}{n-3}I_0(2)+\frac{T_c^5}{(1+T_c^2)^{\frac{n-1}{2}}}\right]\\
=&\frac{8n}{(n-2)(n+1)}\left[J(4,n-7)+\frac{4}{n-3}J(2,n-7)\right], \\
\mathcal{Q}_{12}=&\tfrac{4n}{(n-2)(n+1)}\left[(n-3)I_1(3)-(n-1)I_2(3)+\frac{4}{n-3}I_0(1)-\frac{T_c^4}{(1+T_c^2)^\frac{n-1}{2}}\right]\\
=&\frac{8n}{(n-2)(n+1)}\left[J(3,n-6)+\frac{2}{n-3}J(1,n-6)\right], \\
\mathcal{Q}_{13}=&\frac{4n}{(n-2)(n+1)}\left[(n-2)I_1(2)-(n-1)I_2(2)+\frac{T_c^3}{(1+T_c^2)^\frac{n-1}{2}}\right]\\
=&\frac{8n}{(n-2)(n+1)}J(2,n-5), \\
\mathcal{Q}_{14}=&-\frac{4n}{n+1}\left[I_1(4)+2T_cI_1(3)+T_c^2I_1(2)\right]\\
=&-\frac{4n}{n+1}\left[J(4,n-7)+2T_cJ(3,n-6)+T_c^2J(2,n-5)\right],\\
\mathcal{Q}_{22}=&\tfrac{4n}{(n-2)(n+1)}\left[(n-2)I_1(2)-(n-1)I_2(2)+\frac{2}{n-3}I_0(0)+\frac{T_c^3}{(1+T_c^2)^{\frac{n-1}{2}}}\right]\\
=&\frac{8n}{(n-2)(n+1)}\left[J(2,n-5)+\frac{1}{n-3}J(0,n-5)\right], \\
\mathcal{Q}_{23}=&\frac{4n}{(n-2)(n+1)}\left[(n-1)I_1(1)-(n-1)I_2(1)-\frac{T_c^2}{(1+T_c^2)^{\frac{n-1}{2}}}\right]\\
=&\frac{8n}{(n-2)(n+1)}J(1,n-4), \\
\mathcal{Q}_{24}=&-\frac{4n}{n+1}\left[I_1(3)+2T_cI_1(2)+T_c^2I_1(1)\right]\\
=&-\frac{4n}{n+1}\left[J(3,n-6)+2T_cJ(2,n-5)+T_c^2J(1,n-4)\right],\\
\mathcal{Q}_{33}=&\frac{4n}{(n-2)(n+1)}\left[nI_1(0)-(n-1)I_2(0)+\frac{T_c}{(1+T_c^2)^{\frac{n-1}{2}}}\right]\\
=&\frac{8n}{(n-2)(n+1)}J(0,n-3), \\
\mathcal{Q}_{34}=&-\frac{4n}{n+1}\left[I_1(2)+2T_cI_1(1)+T_c^2I_1(0)\right]\\
=&-\frac{4n}{n+1}\left[J(2,n-5)+2T_cJ(1,n-4)+T_c^2J(0,n-3)\right],\\
\mathcal{Q}_{44}=&\Big[-\frac{2}{n-3}(I_0(2)+2T_cI_0(1)+T_c^2I_0(0))\\
&~~+ (I_1(4)+4T_cI_1(3)+6T_c^2I_1(2)+4T_c^3I_1(1)+T_c^4I_1(0))\Big]\frac{2n(n-2)}{n+1} \\
=&\Big[-\frac{2}{n-3}(J(2,n-7)+2T_cJ(1,n-6)+T_c^2J(0,n-5)) \\
&~~+(J(4,n-7)+4T_cJ(3,n-6)+6T_c^2J(2,n-5))+4T_c^3J(1,n-4) \\
&~~+T_c^4J(0,n-3))\Big]\frac{2n(n-2)}{n+1}
\end{align*}
by virtue of \eqref{iteration_J2}.

By \eqref{est:error_final_higher_dim} and \eqref{ieq:goal_less}, it remains to prove the following
\begin{claim}\label{seek_vector}
There exists a vector $\kappa=\kappa(n,c) \in \mathbb{R}^4$ with its fourth element equal to $1$ such that $\kappa \mathcal{Q}\kappa^\top<0$.
\end{claim}
To see this, we choose two non-singular matrices 
\begin{equation}\label{first_nonsing_matrix}
\mathcal{S}_1=\mathrm{diag}\left\{1,1,1,-\frac{2}{n-2}\right\}
\end{equation}
and
\begin{align}\label{second_nonsing_matrix}
\mathcal{S}_2=\left(\begin{array}{lllc}
1&0 &0 &-1  \\
 0 &1&0&-2T_c \\
  0 & 0  &1 &-T_c^2 \\
      0 & 0     & 0&1
\end{array}
\right).
\end{align}
Then we obtain a symmetric matrix $\overline{\mathcal{Q}}=\frac{(n-2)(n+1)}{8n}\mathcal{S}_2^\top \mathcal{S}_1^\top\mathcal{Q}\mathcal{S}_1\mathcal{S}_2$, where 
\begin{align*}
\overline{\mathcal{Q}}_{11}&=J(4,n-7)+\tfrac{4}{n-3}J(2,n-7),\\
\overline{\mathcal{Q}}_{12}&=J(3,n-6)+\tfrac{2}{n-3}J(1,n-6),\\
\overline{\mathcal{Q}}_{13}&=J(2,n-5),\\
\overline{\mathcal{Q}}_{14}&=-\tfrac{4}{n-3}J(2,n-7)-\tfrac{4T_c}{n-3}J(1,n-6),\\
\overline{\mathcal{Q}}_{22}&=J(2,n-5)+\tfrac{1}{n-3}J(0,n-5),\\
\overline{\mathcal{Q}}_{23}&=J(1,n-4),\\
\overline{\mathcal{Q}}_{24}&=-\tfrac{2}{n-3}J(1,n-6)-\tfrac{2T_c}{n-3}J(0,n-5),\\
\overline{\mathcal{Q}}_{33}&=J(0,n-3),\\
\overline{\mathcal{Q}}_{34}&=0,\\
\overline{\mathcal{Q}}_{44}&=\tfrac{2}{n-3}\left[J(2,n-7)+2T_cJ(1,n-6)+T_c^2J(0,n-5)\right].
\end{align*}
For any $a \in \mathbb{R}$ satisfying $7a^2-8a+2<0$, by choosing $\mathcal{V}=(a,T_c,0,1)$ we find 
\begin{align*}
&\mathcal{V}\overline{\mathcal{Q}}\mathcal{V}^\top\\
=&a^2\left[J(4,n-7)+\frac{4}{n-3}J(2,n-7)\right]+2aT_c\left[J(3,n-6)+\frac{2}{n-3}J(1,n-6)\right]\\
&+2a\left[-\frac{4}{n-3}J(2,n-7)-\frac{4T_c}{n-3}J(1,n-6)\right]\\
&+T_c^2\left[J(2,n-5)+\frac{1}{n-3}J(0,n-5)\right]\\
&+2T_c\left[-\frac{2}{n-3}J(1,n-6)-\frac{2T_c}{n-3}J(0,n-5)\right]\\
&+\frac{2}{n-3}\left[J(2,n-7)+2T_cJ(1,n-6)+T_c^2J(0,n-5)\right]\\
=&-\frac{(a-1)^2}{n-3}\frac{T_c^3}{(1+T_c^2)^{\frac{n-3}{2}}}+\frac{7a^2-8a+2}{n-3}J(2,n-7)<0,
\end{align*}
where we have used \eqref{iteration_J1} in the last identity.

Therefore, we can choose $\kappa=-\frac{n-2}{2}\mathcal{V}\mathcal{S}_2^\top \mathcal{S}_1^\top$ whose fourth element equals $1$, such that $\kappa \mathcal{Q}\kappa^\top<0$. This proves the Claim.
\end{proof}

\appendix

\section{Appendix}\label{Appen}
\renewcommand{\theequation}{A-\arabic{equation}}
\setcounter{equation}{0}
\renewcommand{\thetheorem}{A-\arabic{theorem}}
\setcounter{theorem}{0}

Our first purpose is to prove that $I$ satisfies $(PS)$ condition for energy level below $S_c$.  For clarity, we restate Lemma \ref{lem:compactness} here.

\begin{lemma}[\textbf{Compactness}]\label{lem:appen_compactness} Suppose that $Y(M,\partial M,[g_0])>0$. Let $\{u_i;i \in \mathbb{N}\}$ be a sequence of functions in $H^1(M,g_0)$ satisfying $I[u_i]\to L<S_c$ and 
$$\max_{v\in H^1(M,g_0)\backslash\{0\}}\frac{|I'[u_i](v)|}{\|v\|_{H^1(M,g_0)}}\to 0 \quad\mathrm{~~as~~} i \to \infty.$$
Then after passing to a subsequence, either (i) $\{u_i\}$ strongly converges in $H^1(M,g_0)$ to some positive solution $u$ of \eqref{eq:main_eq_on_M} or (ii) $\{u_i\}$ strongly converges to $0$ in $H^1(M,g_0)$. 
\end{lemma}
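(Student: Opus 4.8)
The plan is to run a Br\'ezis--Lieb / concentration--compactness analysis of the Palais--Smale sequence $\{u_i\}$, sharpening Han--Li's \cite[Lemma 1.2]{han-li2}; the crux is that below the threshold $S_c$ no energy can escape into a ``bubble''.

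First I would show that $\{u_i\}$ is bounded in $H^1(M,g_0)$. From $I[u_i]\to L$ and $\|I'[u_i]\|\to 0$, the combination
$$I[u_i]-\frac{n-2}{2(n-1)}I'[u_i](u_i)=\frac{1}{n-1}E[u_i]+4(n-2)\int_M (u_i)_+^{\frac{2n}{n-2}}d\mu_{g_0}$$
has the boundary nonlinearity cancelled, and its left side equals $L+o(1)+o(1)\|u_i\|_{H^1(M,g_0)}$. Since $Y(M,\pa M,[g_0])>0$ makes $E$ equivalent to $\|\cdot\|_{H^1(M,g_0)}^2$ and the interior integral is non-negative, the bound follows. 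Extracting a subsequence, $u_i\rightharpoonup u$ in $H^1(M,g_0)$, strongly in $L^p(M)$ for $p<\frac{2n}{n-2}$, strongly in $L^q(\pa M)$ for $q<\frac{2(n-1)}{n-2}$, and a.e.; testing $I'[u_i]\to 0$ against a fixed $\varphi$ and using these convergences shows $I'[u]=0$, so $u$ weakly solves \eqref{eq:main_eq_on_M}. By the regularity theory in \cite{Cherrier} together with the maximum principle and the Hopf boundary point lemma, either $u\equiv 0$ or $u>0$ in $\overline M$; and arguing as in \eqref{eq:constraint_max_t}--\eqref{max_I_at_tu} with $t_\ast=1$, every solution satisfies $I[u]=\frac{1}{n-1}E[u]+4(n-2)\int_M u_+^{\frac{2n}{n-2}}d\mu_{g_0}\ge0$, with equality only if $u\equiv 0$.

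Next I would isolate the concentration part. Put $v_i:=u_i-u\rightharpoonup0$; since $v_i\to0$ in $L^2(M)$ and $L^2(\pa M)$ one has $E[u_i]=E[u]+\frac{4(n-1)}{n-2}\int_M|\nabla v_i|^2\,d\mu_{g_0}+o(1)$, and the Br\'ezis--Lieb lemma applied to $t\mapsto(t_+)^{\frac{2n}{n-2}}$ and $t\mapsto(t_+)^{\frac{2(n-1)}{n-2}}$ splits the critical interior and boundary integrals. This gives $I[u_i]=I[u]+J_i+o(1)$, where $J_i:=\frac{4(n-1)}{n-2}\int_M|\nabla v_i|^2-4(n-1)(n-2)\int_M(v_i)_+^{\frac{2n}{n-2}}-4c\int_{\pa M}(v_i)_+^{\frac{2(n-1)}{n-2}}$, so $J_i\to L-I[u]<S_c$; and from $I'[u_i](u_i)=o(1)$, using $I'[u](u)=0$ and the same splitting, one gets $\frac{1}{n-2}\int_M|\nabla v_i|^2=n\int_M(v_i)_+^{\frac{2n}{n-2}}+\frac{c}{n-2}\int_{\pa M}(v_i)_+^{\frac{2(n-1)}{n-2}}+o(1)$, i.e. $\{v_i\}$ behaves like a critical sequence for the limiting ``free'' functional at infinity. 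A Struwe-type profile decomposition (equivalently, Lions' concentration--compactness), invoking the classification of entire solutions of $-\Delta V=n(n-2)V^{\frac{n+2}{n-2}}$ on $\mathbb{R}^n$ and of \eqref{prob:half-space} on $\mathbb{R}^n_+$ from \cite{li-zhu,escobar5}, then shows that either $v_i\to0$ in $H^1(M,g_0)$ or $v_i$ carries at least one bubble: after rescaling at the concentration rate (in $g_{x_0}$-Fermi coordinates when the concentration point $x_0$ lies on $\pa M$) the profile converges to a non-trivial solution of the Euclidean Yamabe equation, if $x_0$ is interior, or of \eqref{prob:half-space}, if $x_0\in\pa M$. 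A boundary bubble contributes exactly the value $S_c$ of $W_\e$ to $\lim J_i$, while an interior bubble contributes $8(n-1)\int_{\mathbb{R}^n}V^{\frac{2n}{n-2}}dy$, which exceeds $S_c$ for every $c\in\mathbb{R}$ by \eqref{energy:bubble2} and $\tfrac{1}{2(n-1)}<\tfrac1n$. Hence $\lim J_i\ge S_c$, contradicting $\lim J_i=L-I[u]\le L<S_c$; so no bubbling occurs, $v_i\to0$ strongly, and $u_i\to u$ in $H^1(M,g_0)$. If $u\equiv0$ we are in case (ii); otherwise $u>0$ solves \eqref{eq:main_eq_on_M} and we are in case (i).

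The step I expect to be the main obstacle is the last one: making the assertion ``each bubble costs at least $S_c$'' rigorous and uniform in $c$, in particular the interior-versus-boundary comparison, ruling out a sign-changing blow-up profile, and tracking the boundary integral in $J_i$ when $c<0$ (where its sign is unfavourable). This is exactly where the explicit formula $S_c=8(n-1)A+\frac{4}{n-2}cB$ and the identity \eqref{energy:bubble2} are used, and, if a cleaner estimate is needed, the sharp Sobolev trace inequality (as in Theorem \ref{thm:negative_c_higher_dim}); one also must perform the rescaling in $g_{x_0}$-Fermi coordinates so that the blow-up limit is precisely \eqref{prob:half-space} rather than a perturbation of it.
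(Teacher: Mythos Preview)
Your proposal is correct and follows the standard Struwe/Br\'ezis--Lieb template; it would yield a self-contained proof. The paper, however, takes a much shorter route: it invokes Han--Li's \cite[Lemma~1.2]{han-li2} as a black box---that lemma already contains the boundedness, the weak convergence to a (possibly trivial) solution $u$, and, in its Case~(ii), the full bubble analysis showing that a Palais--Smale sequence with weak limit zero and level below $S_c$ must converge strongly to zero. The only thing left to prove is the \emph{upgrade} from weak to strong convergence when $u\not\equiv 0$, and for this the paper shows that $v_i=u_i-u$ is itself a Palais--Smale sequence with $v_i\rightharpoonup 0$ and level $\le L<S_c$, then feeds $v_i$ back into Case~(ii). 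The key computation is
\[
I[u_i-u]=I[u_i]-\int_0^1 I'[u_i-tu]u\,dt\to L-\int_0^1 I'[(1-t)u]u\,dt\le L,
\]
using that $\tfrac{d}{dt}I[tu]=I'[tu]u=2t(1-t^{2/(n-2)})\bigl[E[u]+4n(n-1)t^{2/(n-2)}\int_M u^{2n/(n-2)}\bigr]\ge 0$ on $[0,1]$ when $u$ solves \eqref{eq:main_eq_on_M}. This replaces your Br\'ezis--Lieb splitting plus the inequality $I[u]\ge 0$. What you gain from your approach is independence from \cite{han-li2} and a transparent accounting of why interior and boundary bubbles each cost at least $S_c$; what the paper gains is brevity, since all of the concentration--compactness machinery (including the delicate points you flag about sign-changing profiles and the interior/boundary comparison) is absorbed into the citation.
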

\begin{proof}
Since $Y(M,\pa M,[g_0])>0$, we define $\langle u,v\rangle:=\frac{4(n-1)}{n-2}\int_{M}\langle \nabla u,\nabla v\rangle_{g_0}d\mu_{g_0}+\int_{M} R_{g_0}uv d\mu_{g_0} +2(n-1)\int_{\pa M} h_{g_0}uv d\sigma_{g_0}$ as an inner product of $H^1(M,g_0)$. The norm defined by $\|u\|=\langle u,u \rangle^{1/2}$ is equivalent to $\|u\|_{H^1(M,g_0)}$. Based on Han-Li \cite[Lemma 1.2]{han-li2}, we only need to show that if $\{u_i\}\subset H^1(M,g_0)$ weakly converges to some nontrivial solution $u$ of \eqref{eq:main_eq_on_M} , then $u_i \to u$ in $H^1(M,g_0)$ as $i \to \infty$.

By the Sobolev embedding and trace inequalities, up to a subsequence we have 
\begin{align*}
u_i \rightharpoonup u \quad&\mathrm{~~in~~} H^1(M,g_0);\\
u_i \to u \quad &\mathrm{~~in~~} L^2(M,g_0) \mathrm{~~and~~} L^2(\pa M,g_0).
\end{align*}
For any $v\in H^1(M,g_0)$ we have
\begin{align}\label{PS_1}
&\frac{1}{2}I'[u_i-u]v \no\\
=&\langle u_i-u,v\rangle-4n(n-1)\int_{M}(u_i-u)_+^\frac{n+2}{n-2}vd\mu_{g_0}-\tfrac{4(n-1)}{n-2}c\int_{\pa M}(u_i-u)_+^\frac{n}{n-2}vd\sigma_{g_0}\no\\
\to& 0, \quad \mathrm{as~~} i\to \infty, 
\end{align}
which implies that $I'[u_i-u]\to 0$ as $i\to \infty$.

By the Lebesgue dominant convergence theorem, we have
\begin{align*}
I[u_i-u] =&I[u_i]+\int_0^1\frac{d}{dt}I[u_i-tu]dt \\
=&I[u_i]-\int_0^1 I'[u_i-tu]u dt \\
\to& L-\int_0^1 I'[(1-t)u]u dt, \mathrm{~~as~~} i\to\infty. 
\end{align*}

Since $u$ is a nontrivial smooth solution of \eqref{eq:main_eq_on_M}, we obtain
$$
\left.\frac{d}{dt}\right|_{t=1}I[tu]=0,
$$
and then
\begin{align*}
&\frac{d}{dt}I[tu]\\
=&2t\left[E[u]-4n(n-1)t^{\frac{4}{n-2}}\int_{M}u^{\frac{2n}{n-2}}d\mu_{g_0}-\frac{4(n-1)}{n-2}ct^\frac{2}{n-2}\int_{\pa M}u^{\frac{2(n-1)}{n-2}}d\sigma_{g_0}\right]\\
=&2t(1-t^{\frac{2}{n-2}})\left[E[u]+t^{\frac{2}{n-2}}4n(n-1)\int_M u^{\frac{2n}{n-2}}d\mu_{g_0}\right].
\end{align*}
Thus,
$$\frac{d}{dt}I[tu]=I'[tu]u\geq0,\quad  \forall~ t \in [0,1].$$
From these we conclude that
\begin{equation}\label{PS_2}
\lim_{i\to \infty}I[u_i-u]\leq L<S_c.
\end{equation}
Therefore, thanks to \eqref{PS_1} and \eqref{PS_2}, we can apply the same argument of Case \textit{(ii)} in \cite[Lemma 1.2]{han-li2} to $u_i-u$ and then obtain  $u_i \to u$ in $H^1(M,g_0)$ as $i \to \infty$.
\end{proof}

Next we obtain the following refined estimate of \cite[Proposition 5.5]{Chen-Sun} (see also \cite[Corollary 2.6]{Brendle-Chen}), which was used in Section \ref{Sect5}.
\begin{proposition}\label{app:prop:1}
Let $(M,g_0)$ be a compact Riemannian manifold of dimension $n\geq 3$ with umbilic boundary $\pa M$ and $c \in \mathbb{R}$. Then there exists a positive dimensional constant $\lambda^\ast$ (independent of $T_c$), such that 
\begin{align*}
&\lambda^\ast\e^{n-2}\sum_{a,b=1}^{n-1}\sum_{|\a|=2}^{d}|\pa^\alpha h_{ab}|^2\int_{B^+_{\rho}(0)}(\e+|y|)^{2|\a|+2-2n}dy\\
\leq& \int_{B^+_{\rho}(0)}\sum_{i,k,l=1}^n Q_{ik,l}Q_{ik,l}dy
\end{align*}
for all $\rho\geq 2(1+|T_c|)\e$.
\end{proposition}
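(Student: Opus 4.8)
The plan is to obtain this as a refinement of \cite[Proposition 5.5]{Chen-Sun} (compare \cite[Corollary 2.6]{Brendle-Chen}), the single new feature being that the constant $\lambda^\ast$ can be taken to depend only on $n$. First I would rescale: since $Q_{ik,l}$, the bubble $W_\e$ and the field $V$ solving \eqref{eq:V} all transform in a definite way under $y\mapsto\e y$, and $\int_{B^+_\rho}(\e+|y|)^{2|\alpha|+2-2n}\,dy\sim\e^{\,2|\alpha|+2-n}$ (with a harmless logarithmic factor when $|\alpha|=d$ and $n$ is even, which appears on both sides), the asserted inequality is scale invariant, so it suffices to treat $\e=1$; splitting $H$ into homogeneous pieces and using \eqref{est:V}--\eqref{est:w} to absorb the interaction between different degrees, and discarding the integral over $\mathbb{R}^n_+\setminus B^+_R$, which by \eqref{est:V} is of lower order once $R\ge 2(1+|T_c|)$, one is reduced to proving
\[
\int_{\mathbb{R}^n_+}\sum_{i,k,l}Q_{ik,l}Q_{ik,l}\,dy\ \ge\ \lambda^\ast\sum_{a,b}\sum_{|\alpha|=k}\bigl|\partial^\alpha h_{ab}(0)\bigr|^2,\qquad 2\le k\le d,
\]
uniformly in $T_c$, where $Q$ is built from $W_1$ and from the residual $T=H-\mathcal LV$ of the minimizer $V$ of $\int_{\mathbb{R}^n_+}W_1^{2n/(n-2)}|H-\mathcal LV|^2\,dy$, with $\mathcal L$ the trace-free conformal Killing operator appearing in \eqref{eq:V}.

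Next I would transplant the whole estimate onto the spherical cap, exactly as in the proof of Theorem \ref{thm:negative_c_higher_dim}: under the inverse stereographic projection $\pi^{-1}$, the identity $Q_{ij,k}=W_1^{(n-6)/(n-2)}\nabla^{\Sigma}_k(\mathcal T\circ\pi^{-1})_{ij}$ rewrites the left side as $\int_{\Sigma}|\nabla^{\Sigma}\mathcal T|^2_{g_\Sigma}\,d\mu_{g_\Sigma}$ and turns the divergence form of \eqref{eq:V} into the statement that $\mathcal T$ is $L^2(g_\Sigma)$-orthogonal to the image of the conformal Killing operator on $\Sigma$. The inequality thus becomes a Poincar\'e-type bound $\int_{\Sigma}|\nabla^{\Sigma}\mathcal T|^2_{g_\Sigma}\ge\lambda^\ast\,\mathcal N(\mathcal T)$ for trace-free symmetric $2$-tensors $\mathcal T$ in this constrained family, $\mathcal N$ the transplant of the right-hand side. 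For each fixed cap angle $r$ (equivalently each fixed $T_c$, via $\cos r=-T_c/\sqrt{1+T_c^2}$) this is precisely \cite[Proposition 5.5]{Chen-Sun}, so the remaining task is to rule out degeneration of the constant as $r$ ranges over $(0,\pi)$.

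I would do this by contradiction and compactness in the parameter $r\in[0,\pi]$: from a sequence $T_c^{(m)}$, normalized homogeneous polynomial tensors $H^{(m)}$ with $\sum_{a,b,|\alpha|}|\partial^\alpha h^{(m)}_{ab}(0)|^2=1$ and $\int|Q^{(m)}|^2\to 0$, pass to $r_m\to r_\infty$. When $r_\infty\in(0,\pi)$ the $T_c^{(m)}$ stay bounded, the finite-dimensional space of such tensors is compact and the constants in \eqref{est:V} are uniform, so one gets limits $H^\infty\ne 0$, $V^\infty$, $T^\infty$ with $\nabla^{\Sigma^\infty}\mathcal T^\infty\equiv 0$; thus $\mathcal T^\infty$ is a parallel trace-free symmetric $2$-tensor on the connected cap $\Sigma^\infty$, hence $\mathcal T^\infty\equiv 0$ because round-sphere holonomy fixes no such tensor, hence $H^\infty=\mathcal LV^\infty$ is pure gauge; this contradicts $H^\infty\ne 0$, because by \eqref{def:Z}--\eqref{expan_Weyl_tensor} the $Z$-tensor of a conformal-Fermi expansion agrees with the Weyl curvature up to admissible lower-order terms and therefore vanishes on the range of $\mathcal L$, while \cite[Proposition 2.3]{Brendle-Chen} yields a nonzero residual for any admissible $H\ne 0$. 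The two degenerate endpoints are handled by blowing up the cap: $r_\infty=0$ (that is $T_c^{(m)}\to-\infty$, $c\to+\infty$, which does occur in the application) leads to the flat half-space model, where the inequality is the flat-space estimate behind \cite[Corollary 2.6]{Brendle-Chen}; $r_\infty=\pi$ (that is $T_c^{(m)}\to+\infty$, $c\to-\infty$) has $\partial\Sigma^{(m)}$ collapse to a point and leads to the round sphere $S^n$, where it reduces to Brendle's closed-manifold proposition \cite{Brendle2}. In each case the model inequality holds with a positive dimensional constant and is stable under perturbation, contradicting $\int|Q^{(m)}|^2\to 0$.

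The hard part, I expect, is exactly this uniformity in $T_c$, and above all the collapsing regime $r\to 0$: there the cap $\Sigma$ shrinks to a point, the bubble weight $W_1^{2n/(n-2)}$ degenerates on the large ball $B^+_R$, and the blow-up must be carried out carefully — checking that the boundary contributions on $\partial\Sigma$ do not spoil the limit, and that the transplanted tensor $\mathcal T$, which may acquire a polynomial-order singularity at the antipode of the cap center once $d\ge 5$, still lies in the domain of the model inequality. Everything else is either routine rescaling bookkeeping or a quotation of \cite[Proposition 5.5]{Chen-Sun} and of the rigidity of parallel trace-free symmetric $2$-tensors on the sphere.
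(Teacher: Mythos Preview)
Your route is conceptually very different from the paper's and considerably harder. The paper's proof is short and direct: it never transplants to the spherical cap, never runs a compactness argument in $T_c$, and never blows up at degenerate limits. Instead it localizes to an interior ball $U_r$ of radius $r/4$ centered at $(3r/2)\,\mathbf{e}_n$, uses the algebraic identity
\[
\tfrac{1}{4}\sum_{i,j,k,l} |Z_{ijkl}|^2
=\sum_{i,j,k,l}\partial_j\bigl(W_\e^{-1} Q_{ik,l}\bigr) Z_{ijkl}
+\tfrac{2}{n-2}\sum_{i,j,k,l} W_\e^{-2}\,\partial_k W_\e\, Q_{il,j}\, Z_{ijkl},
\]
multiplies by a cutoff $\eta(y/r)$ supported in the interior (so no boundary terms after integration by parts), and applies H\"older together with the pointwise bound $W_\e^{-1}\le C(n)\,\e^{-(n-2)/2} r^{n-2}$ on the support of $\eta$. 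This last bound is the entire source of $T_c$-independence: since $U_r$ sits at height $\sim r$ and the bubble center is $T_c\e\,\mathbf{e}_n$, the distance from any $y\in U_r$ to the center is comparable to $r$ once $r\ge (1+|T_c|)\e$, with a purely dimensional comparison constant. Combining with \cite[Proposition 2.4]{Brendle-Chen}, which bounds $\int_{U_r}|Z|^2$ from below by a dimensional constant times $\sum_{a,b}\sum_{|\alpha|}|\partial^\alpha h_{ab}|^2 r^{2|\alpha|-4+n}$, one gets a dyadic-shell estimate and sums over $r$.

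Your scheme has two concrete gaps. First, the step ``discard the integral over $\mathbb{R}^n_+\setminus B_R^+$, which is of lower order'' goes the wrong way: to pass from an $\mathbb{R}^n_+$ estimate back to $B_\rho^+$ you need a tail bound of the form $\int_{\mathbb{R}^n_+\setminus B_\rho^+}|Q|^2\le \tfrac{1}{2}\lambda^\ast\sum|\partial^\alpha h|^2$, and the only available pointwise control on $T$ and $V$, namely \eqref{est:V}, carries the constant $C(n,T_c)$ --- so this reduction is not uniform in $T_c$ as stated. Second, in the compactness argument the polynomial datum $H$ is finite-dimensional, but the residual $T=H-\mathcal{L}V$ and hence $Q$ depend on the minimizer $V$, which lives in an infinite-dimensional space and whose a priori estimates again carry $C(n,T_c)$; extracting a limit for $T^{(m)}$ as $T_c^{(m)}\to\pm\infty$ and identifying the limiting model therefore needs exactly the uniform control you are trying to prove. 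You correctly flag the $r\to 0$ blow-up as the crux, but the paper sidesteps all of this: the single observation that $W_\e$ is dimensionally controlled on interior balls at scale $r\ge (1+|T_c|)\e$ replaces the entire compactness machinery.
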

\begin{proof}
For $r>0$, let $U_r \subset \mathbb{R}_+^n$ be an open ball of radius $r/4$ centered at $(3r/2) \textbf{e}_n$. Denote by $\eta(y/r)$ a smooth cut-off function such that $\eta=1$ in $U_r$, $\eta=0$ in $B_{2r}^+(0)\setminus \overline{B_r^+}$. Since $\pa M$ is umbilic, we can apply \cite[Proposition 2.4]{Brendle-Chen} to show
\begin{align*}
C(n) \sum_{a,b=1}^{n-1}\sum_{|\a|=2}^{d}|\pa^\alpha h_{ab}|^2 r^{2|\alpha|-4+n}\leq \int_{U_r}\sum_{i,j,k,l=1}^n |Z_{ijkl}(y)|^2 dy
\end{align*}
for all $r>0$.

Observe that 
\begin{align*}
&\frac{1}{4}\sum_{i,j,k,l=1}^n|Z_{ijkl}|^2\\
=&\sum_{i,j,k,l=1}^n\pa_j(\U^{-1}Q_{ik,l})Z_{ijkl}+\frac{2}{n-2}\sum_{i,j,k,l=1}^n\U^{-2}\pa_k\U Q_{il,j}Z_{ijkl}.
\end{align*}
Multiplying the above equation by $\eta(y/r)$ and integrating over $\R^n_+$, we obtain
\begin{align*}
\frac{1}{4}\int_{\R^n_+}\sum_{i,j,k,l=1}^n|Z_{ijkl}|^2\eta(\frac{y}{r})dy=&-\int_{\R^n_+}\sum_{i,j,k,l=1}^n\U^{-1}Q_{ik,l}\pa_j\left[Z_{ijkl}\eta(\frac{y}{r})\right]dy\\
&+\frac{2}{n-2}\int_{\R^n_+}\sum_{i,j,k,l}^n\U^{-2}\pa_k\U Q_{il,j}Z_{ijkl}\eta(\frac{y}{r})dy.
\end{align*}
Applying H\"{o}lder's inequality, we have
\begin{align*}
&\int_{U_r}\sum_{i,j,k,l=1}^n|Z_{ijkl}|^2dy\\
\leq& C\e^{-\frac{n-2}{2}}(r+(1+|T_c|\e))^{n-2}\left(\sum_{|\alpha|=2}^d\sum_{i,k=1}^n|\pa^\alpha h_{ik}|^2r^{2|\alpha|-6+n}\right)^{\frac{1}{2}}\\
&\cdot \left(\int_{B^+_{2r}(0)\backslash B^+_r(0)}\sum_{i,k,l=1}^n|Q_{ik,l}(y)|^2dy\right)^{\frac{1}{2}}\\
\leq&Cr^{n-3}\left(\sum_{|\alpha|=2}^d\sum_{i,k=1}^n|\pa^\alpha h_{ik}|^2r^{2|\alpha|-4+n}\right)^{\frac{1}{2}}\left(\int_{B^+_{2r}(0)\backslash B^+_r(0)}\sum_{i,k,l=1}^n|Q_{ik,l}(y)|^2dy\right)^{\frac{1}{2}}
\end{align*}
for all $r\geq \e(1+ |T_c|)$. For such $\e$, one can combine with \cite[Proposition 2.4]{Brendle-Chen} to get the conclusion.
\end{proof}


\end{document}